\documentclass{amsart}

\usepackage[utf8]{inputenc}
\usepackage[symbol]{footmisc}
\usepackage{tikz,amsfonts,amsthm,amsmath,amssymb,xcolor,blindtext,subcaption,multicol,biblatex,xspace,tagging}
\usetikzlibrary{arrows.meta,decorations.pathmorphing,backgrounds,decorations.pathreplacing, decorations.shapes,positioning,fit,petri,calc,bending,matrix,animations,intersections}

\newcommand{\parallelproofrow}[2]{%
  \noindent
  \begin{minipage}[t]{0.48\textwidth}
    \vspace{0pt}#1
  \end{minipage}%
  \hfill
  \begin{minipage}[t]{0.48\textwidth}
    \vspace{0pt}#2
  \end{minipage}%
  \par\medskip
}

\usepackage{booktabs}

\usepackage[shortlabels]{enumitem}

\usepackage[margin=1.2in]{geometry}

\usetag{comments} 

\newcommand\sltwo{\ensuremath{\mathfrak{sl}_2}\xspace}

\newtheorem{theorem}{Theorem}[section]
\newtheorem{conjecture}[theorem]{Conjecture}
\newtheorem{lemma}[theorem]{Lemma}
\newtheorem{corollary}[theorem]{Corollary}
\newtheorem{proposition}[theorem]{Proposition}
\newtheorem{example}[theorem]{Example}
\newtheorem{defthm}{Definition/Theorem}[section]

\theoremstyle{definition}
\newtheorem{definition}[theorem]{Definition} 

\title{Strong \sltwo Actions and Symmetric Chain Decompositions}

\author{Nate Harman}

\author{Rylan Rosas}

\author{Charles Troutman III}

\author{Chris Wu}

\begin{document}

\begin{abstract}
   
   We introduce a notion of a strong $\mathfrak{sl}_2$-action on a poset, and conjecture the existence of such an action is equivalent to the existence of a symmetric chain decomposition of that poset.  We explain the intuition behind the conjecture, go through a number of explicit examples, and prove some special cases and relaxations of the conjecture.  Then we explain a computational experiment where we tested the conjecture on millions of randomly constructed rank-symmetric, rank-unimodal, and strongly Sperner posets.

\end{abstract}

\maketitle

\thispagestyle{empty}
\vspace{-.7cm}
\begin{section}{Introduction}

Given a ranked (i.e. graded) poset $P$, we can renormalize the ranks so that the maximal and minimal ranks are $n$ and $-n$ respectively for some $n$, with steps of size $2$.   With this convention we have the following definition:

\begin{definition}
A \emph{strong $\mathfrak{sl}_2$-action} on $P$ is the data of linear operators $E$, $F$, and $H$ on $\mathbb{C}P$ such that:

\begin{enumerate}

\item $H$ scales elements $x \in P$ according to the rank: $Hx = \omega(x)x$, where $\omega$ is the (renormalized) rank function.

\item $E$ acts upwards along cover relations:  $Ex = \sum_{y \gtrdot x} e_{x,y}y$ for some complex numbers $e_{x,y}$. 

\item $F$ acts downwards along cover relations:  $Fx = \sum_{y \lessdot x} f_{x,y}y$ for some complex numbers $f_{x,y}$. 

\item The operators satisfy the $\mathfrak{sl}_2$ relations:  $EF-FE = H$, $HE-EH = 2E$, and $HF-FH = -2F$

\end{enumerate} 

\end{definition}

The ``strong" modifier is to distinguish this from an earlier notion due to Proctor in \cite{proctor82}  where we only require the raising operator $E$ to act along cover relations, and $F$ just has to lower the rank.  A poset admits an $\mathfrak{sl}_2$-action in the sense of Proctor if and only if it is rank symmetric, unimodal, and satisfies the strong Sperner property -- such posets are called \emph{Peck}.  See Section \ref{Peckdef} for more details.

As an example,  here is a visual depiction of a strong $\mathfrak{sl}_2$-action on the Young lattice $L(3,3)$:

\begin{center}
\includegraphics[scale=0.28]{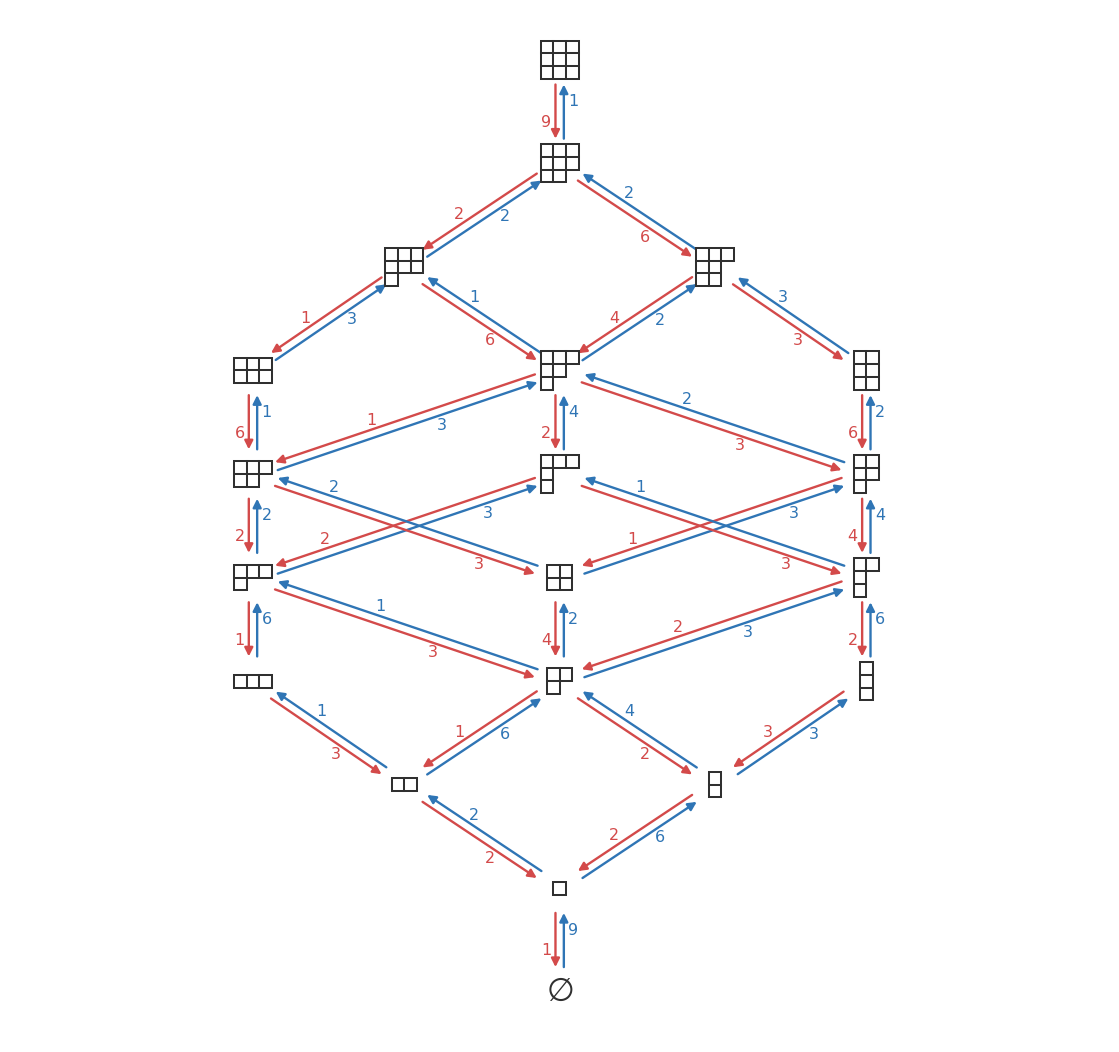}
\end{center}
The blue upwards arrows encode the raising operator $E$, and the red downwards arrows are represent the lowering operator $F$. The operator $H$  acts according to the rank, which from top to bottom go from $9$ to $-9$ in steps of size 2.   Famously, the Young lattices $L(m,n)$ are conjectured to always admit a symmetric chain decomposition -- a covering of the poset by disjoint saturated chains that begin and end in opposite ranks.

In this paper we won't actually look at the $L(m,n)$ posets at all.  Instead we will focus on what we feel is actually the main underlying question about symmetric chain decompositions:

\smallskip

\noindent \textbf{Open Problem:} \emph{Give an algebraic characterization of posets which admit a symmetric chain decomposition.}

\smallskip

\noindent With this broader goal in mind we offer the following conjecture\footnote{We came to this conjecture independently, but it turns out to be equivalent to the $\mathfrak{sl}_2$ case of a beautiful (and not nearly well-known enough) question formulated by Donnelly  for arbitrary semisimple Lie algebras \cite{Donnelly1998Solitary}.  We will discuss his framework more in Section \ref{supportgraphs}. }:

\begin{conjecture}\label{theconjecture}

A ranked poset $P$ admits a strong $\mathfrak{sl}_2$-action if and only if it admits a symmetric chain decomposition. 

\end{conjecture}

One direction is easy, if $P$ admits a symmetric chain decomposition then we can define a $\mathfrak{sl}_2$-action along the chains, see Proposition \ref{SCDimpliesstrong}. The goal of this paper is to provide computational evidence and partial results toward the other direction.  To start we'll provide some intuition for why this might be true by looking at a simple example:

\begin{example} \label{smallestexample}
\ \vspace{-.3cm}
\begin{center}
\includegraphics[scale=0.3]{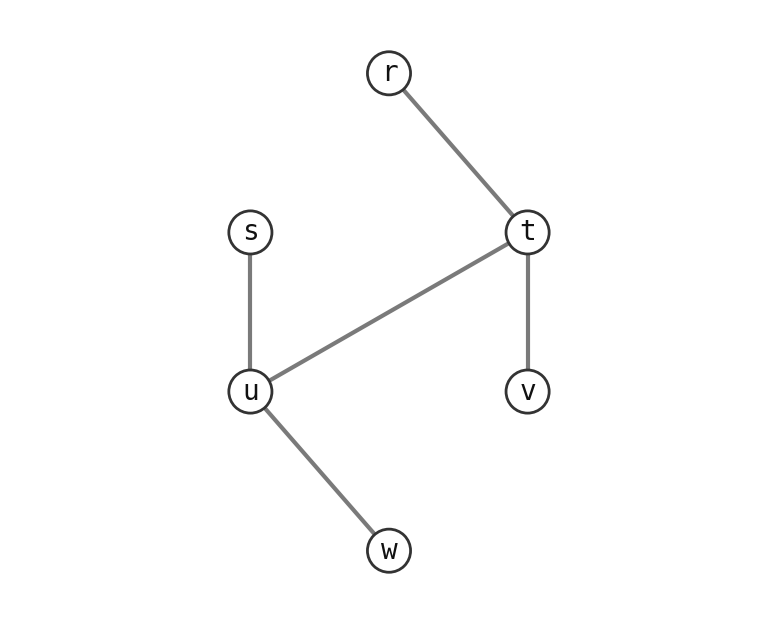}
\end{center}

\end{example}

\parallelproofrow
  {\textbf{Claim:} The above poset does not admit a symmetric
   chain decomposition.}
  {\textbf{Claim:} The above poset does not admit a strong
   $\mathfrak{sl}_2$-action.}

\parallelproofrow
  {\textbf{Proof:} Assume a symmetric chain decomposition exists.}
  {\textbf{Proof:} Assume a strong $\mathfrak{sl}_2$-action exists.}

\parallelproofrow
  {$r$ has no upwards neighbors, so it must be the start of a
   chain of length $4$.}
  {$r$ has no upwards neighbors, so it must be a highest-weight
   vector in the $L(3)$-isotypic component.}

\parallelproofrow
  {$t$ is the unique downwards neighbor of $r$, so it must be in
   a chain of length $4$.}
  {$t$ is the unique downwards neighbor of $r$, so
   $t=cF(r)$ for a nonzero constant $c$, and $t$ is therefore in
   the $L(3)$-isotypic component.}

\parallelproofrow
  {The element $v$ has no downwards neighbors, so it must be the
   end of a chain of length $2$.}
  {The element $v$ has no downwards neighbors, so it must be a
   lowest-weight vector in the $L(1)$-isotypic component.}

\parallelproofrow
  {$t$ is the unique upwards neighbor of $v$. Therefore, $t$ must
   be in a chain of length $2$.}
  {$t$ is the unique upwards neighbor of $v$, so
   $t=dE(v)$ for a nonzero scalar $d$, and therefore $t$ is in
   the $L(1)$-isotypic component.}

\parallelproofrow
  {This is a contradiction: $t$ cannot simultaneously be in
   chains of two different lengths.\hfill$\square$}
  {This is a contradiction: $t$ cannot simultaneously be in two
   different isotypic components.\hfill$\square$}
\vspace{.6cm}

This example is very simple, and there are a number of different arguments one could come up with for both of these facts. However we hope these parallel arguments demonstrate the following informal observation, which underpins why we believe this could be true:

\medskip

\emph{Obstructions to a symmetric chain decomposition seem to translate, often directly,  into obstructions to a strong $\mathfrak{sl}_2$-action}.

\medskip

We have constructed many examples of Peck posets which do not admit a symmetric chain decomposition. Not only have we not found any admitting a strong $\mathfrak{sl}_2$-action, but as a general rule we've found that the easier it is to prove the poset does not have an SCD the easier it is to prove it does not admit a strong $\mathfrak{sl}_2$-action.

\medskip

The remainder of the paper is structured as follows:

\begin{itemize}

\item In Section \ref{background} we set our conventions and cover the necessary background information.  This includes a brief overview of the representation theory of $\mathfrak{sl}_2$, an explanation of the dictionary between matrices and bipartite graphs leading to the Lindstr\"om–Gessel–Viennot lemma,  several equivalent characterizations of a Peck poset, and a discussion of Donnelly's support graph framework.  We also introduce the notion of a fractional symmetric chain decomposition.

\smallskip

\item In Section \ref{combinatorics} we look at how the combinatorics of the poset can constrain a possible strong $\mathfrak{sl}_2$-action, and vice versa.  We give a number of examples of Peck posets which do not admit a symmetric chain decomposition, and explain why these constraints prevent them from having a strong $\mathfrak{sl}_2$-action. We close the section with a proof that the weak Bruhat order for the Coxeter group of type H3 does not admit a strong $\mathfrak{sl}_2$-action. 

\smallskip 

\item In Section \ref{specialcases} we look at special cases and relaxations of the conjecture. We prove the conjecture for several families of rank vectors, and for ``diamond-free" posets. We also show that every poset with $4$ ranks which admits a strong $\mathfrak{sl}_2$-action also admits a fractional symmetric chain decomposition. 

\smallskip

\item In Section \ref{experiments} we discuss the computational experiments we ran, testing the conjecture on millions of randomly generated Peck posets, including thousands of randomly generated worst-case-scenario SCD-critical posets. 

\end{itemize}

\subsection*{Acknowledgments}
Thanks to Joshua Mundinger for many helpful conversations.  This project was started at the 2022 MathILy-EST REU funded by NSF Grant DMS-2149647, and later the first author was supported by NSF grant DMS-2401515. 

\subsection*{AI Acknowledgment}

We used ChatGPT (primarily version 5.6) to write essentially all of the code used in the experiments discussed in Section \ref{experiments}, to find relevant references, to create all of the diagrams and tables in this paper, and to help edit the paper.  All of the proofs, definitions, and conjectures were made by humans.

\end{section}

\begin{section}{Background and Definitions}\label{background}

\subsection{Representations of $\mathfrak{sl}_2$}  For simplicity, we will work over the complex numbers, although much of this paper makes sense over arbitrary fields of characteristic zero.   The Lie algebra $\mathfrak{sl}_2 = \mathfrak{sl}_2(\mathbb{C})$ is the space of $2 \times 2$ complex matrices of trace zero.  It has a standard basis:
$$ e = \begin{bmatrix}
0 & 1\\
0 & 0 
\end{bmatrix}
\hspace{1cm}
 f = \begin{bmatrix}
0 & 0\\
1 & 0 
\end{bmatrix}
\hspace{1cm}
 h = \begin{bmatrix}
1 & 0\\
0 & -1 
\end{bmatrix}$$
which satisfy the relations $[e,f] = h$, $[h,e] = 2e$, and $[h,f] = -2f$ where $[a,b] = ab-ba$ denotes the Lie bracket.  A \emph{representation} of $\mathfrak{sl}_2$ is a complex vector space $V$ equipped with a Lie algebra action of $\mathfrak{sl}_2$.  Concretely, a representation is the data of three linear maps $E,F,H \in End_{\mathbb{C}}(V)$ satisfying $EF-FE = H$, $HE-EH = 2E$ and $HF-FH = -2F$.

\begin{itemize}

\item A subspace $W \subseteq V$ is a \emph{subrepresentation} if it is preserved by $E$, $F$ and $H$.  Note that in this case $W$ is naturally an $\mathfrak{sl}_2$-representation, as is the quotient $V/W$. 

\item We say $V$ is \emph{irreducible} if it has no nonzero proper subrepresentations. 

\item The eigenvalues of $H$ are called \emph{weights}. A vector $\vec{v} \in V$ is called a \emph{weight vector} if it is an eigenvector for $H$.  If we specify the eigenvalue $\lambda$ we'll call $\vec{v}$ a $\lambda$-weight vector. 

\item A \emph{weight basis} for $V$ is a basis consisting of weight vectors.

\item If $\vec{v} \in V$ is a weight vector satisfying $E \vec{v} = 0$ we say $\vec{v}$ is a \emph{highest-weight vector}.

\item If $\vec{v} \in V$ is a weight vector satisfying $F \vec{v} = 0$ we say $\vec{v}$ is a \emph{lowest-weight vector}.

\end{itemize}

As an illustrative example, let $L(n)$ denote the space of homogeneous degree $n$ polynomials in two variables, $x$ and $y$.  This is an $(n+1)$-dimensional space with a natural basis $\{x^n, x^{n-1}y, \dots, xy^{n-1}, y^n\}$.  If we consider the operators
$$E = x \frac{d}{dy} \hspace{1cm} F = y \frac{d}{dx} \hspace{1cm} H = x\frac{d}{dx} - y\frac{d}{dy}$$
acting on $L(n)$ in the natural way it is a standard exercise to verify 

\begin{enumerate}[a)]
\item These operators satisfy the $\mathfrak{sl}_2$ relations and therefore define an $\mathfrak{sl}_2$-representation on $L(n)$.

\item $L(n)$ is irreducible.

\item The standard basis $\{x^n, x^{n-1}y, \dots, xy^{n-1}, y^n\}$ is a weight basis with weights $n, n-2, \dots,$ $-(n-2),$ and $-n$ respectively.

\item Up to scaling,  $x^n$ and $y^n$ are the unique highest-weight and lowest-weight vectors in $L(n)$ respectively.

\end{enumerate}

In fact, these spaces $L(n)$ essentially completely describe the finite dimensional representation theory of $\mathfrak{sl}_2$, as summarized by the following proposition:

\begin{proposition} (For a standard reference, see \cite{humphreys72} chapters $6$ and $7$.)

Let $V$ be a finite dimensional $\mathfrak{sl}_2$-representation. 

\begin{enumerate}[a)]

\item $V$ can be decomposed as a direct sum of irreducible representations, each isomorphic to an $L(k)$: 
$$V  \cong L(k_1) \oplus L(k_2) \oplus \dots \oplus L(k_\ell)$$ 

\item This decomposition is typically not unique, but the coarser decomposition
$$V \cong (L(0)^{\oplus m_0}) \oplus (L(1)^{\oplus m_1}) \oplus \dots \oplus (L(n)^{\oplus m_n})$$
 into isotypic components is unique.  In particular any two decompositions of $V$ into irreducibles have the same multiplicities $m_k$.

\smallskip

\item The multiplicity $m_k$ is equal to the dimension of the space of weight $k$ highest-weight vectors in $V$ (or equivalently, the dimension of the space of weight $-k$ lowest-weight vectors in $V$)

\smallskip

\item The decomposition into isotypic components $V(k) = L(k)^{\oplus m_k}$ coincides with the eigenspace decomposition for the Casimir operator $\Omega = EF + FE + \frac{1}{2}H^2 \in End(V)$.

\end{enumerate}

\end{proposition}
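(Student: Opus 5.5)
The plan is to prove the four parts in the order (a), (c), (b), (d), since each later part uses the earlier ones. Throughout, let $V$ be a finite dimensional representation.

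\emph{Weight space decomposition.} The first step is to show that $H$ is diagonalizable on $V$ with integer eigenvalues, and that $E$ and $F$ shift weights by $\pm 2$. The shifting is immediate: if $H\vec v=\lambda\vec v$, then $HE\vec v=(\lambda+2)E\vec v$.

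\emph{Irreducibles.} Since $V$ is finite dimensional, some $H$-eigenvector $\vec v_0$ has maximal real part of its eigenvalue $\lambda$. Then $E\vec v_0=0$. Set $\vec v_j=F^j\vec v_0$. Induction using $[E,F]=H$ gives
$$E\vec v_j=j(\lambda-j+1)\vec v_{j-1}.$$
Let $m$ be the first index with $\vec v_{m+1}=0$. Applying $E$ to $\vec v_{m+1}=0$ gives $(m+1)(\lambda-m)=0$, so $\lambda=m\in\mathbb{Z}_{\ge 0}$. The span of the $\vec v_j$ is then a subrepresentation isomorphic to $L(m)$, via $\vec v_j\mapsto$ a suitable scalar multiple of $x^{m-j}y^j$. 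In particular, every irreducible $V$ is some $L(n)$, and one checks that $L(n)$ is irreducible.

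\emph{Complete reducibility (main obstacle).} For (a) I would use the Casimir $\Omega=EF+FE+\tfrac12H^2$. A direct computation shows $\Omega$ commutes with $E,F,H$. On $L(n)$, evaluating at $x^n$ and using $EF=FE+H$ gives the scalar $\Omega=\tfrac12 n(n+2)$, which is distinct for distinct $n$. Now decompose $V$ into generalized eigenspaces of $\Omega$; these are subrepresentations. On the generalized eigenspace for $c$, every composition factor is the same $L(n)$ with $\tfrac12 n(n+2)=c$. To split such a $V$, it suffices to split a short exact sequence $0\to W\to V\to L(n)\to 0$ with $W$ having all composition factors $\cong L(n)$, and then induct on length. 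For the splitting, take a weight-$n$ vector $\vec u\in V$ lifting $x^n$; this exists because $H$ acts semisimply on weight spaces by the eigenvector argument applied to the generalized weight spaces. Then $E\vec u\in W$ has weight $n+2$, but $W$ has no weight $n+2$ since its factors are $L(n)$, so $E\vec u=0$. Hence $\vec u$ generates a copy of $L(n)$ complementing $W$.

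The delicate part I expect to fight with is showing $H$ is semisimple in the first place. I would handle it by running the highest-weight argument on generalized weight vectors: $(H-\lambda)$ commutes with $EF$, and one can show $F^{k}\vec v$ vanishes for large $k$ while the identity $EF^{k}\vec v=\dots$ forces $(H-\lambda)\vec v=0$. Alternatively, I would simply cite Weyl's theorem as in \cite{humphreys72}, which is acceptable since the statement itself is cited.

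\emph{Parts (c), (b), (d).}
\begin{itemize}
\item[(c)] Given (a), the highest-weight vectors of weight $k$ in $\bigoplus L(k_i)$ are exactly the sums of the top vectors of the summands with $k_i=k$, because $\ker E$ on $L(n)$ is spanned by $x^n$. So $m_k$ equals the dimension of this space, which is intrinsic to $V$. The lowest-weight statement follows symmetrically.
\item[(b)] Multiplicities are intrinsic by (c). Uniqueness of the isotypic components then follows from (d).
\item[(d)] The Casimir $\Omega$ acts by $\tfrac12 k(k+2)$ on every copy of $L(k)$, and these values are distinct for distinct $k\ge 0$. Hence the eigenspaces of $\Omega$ are exactly the sums $L(k)^{\oplus m_k}$, independent of the chosen decomposition.
\end{itemize}
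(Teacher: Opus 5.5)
Your argument is correct in outline, but it follows a different route from the paper. The paper gives no proof and only cites Humphreys. There, complete reducibility is Weyl's theorem, proved for an arbitrary semisimple Lie algebra: the Casimir element of a faithful representation is used to split off codimension-one submodules, and a Hom-space trick then handles general submodules. Diagonalizability of $H$ is deduced afterwards from preservation of Jordan decomposition, and Chapter 7 classifies the irreducibles by highest weights. Your route is specific to $\mathfrak{sl}_2$. Because $\Omega$ acts on $L(n)$ by $\tfrac12 n(n+2)$, which separates all irreducibles, the generalized $\Omega$-eigenspace decomposition reduces you at once to modules whose composition factors are all a single $L(n)$. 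Such extensions are then split by lifting a highest-weight vector. This is shorter and self-contained, and it makes (d) nearly automatic. Humphreys' argument, by contrast, works for every semisimple Lie algebra, where the Casimir need not separate irreducibles.

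The step you left as a sketch, the existence of an honest weight-$n$ lift $\vec u$ of $x^n$, does need an argument. Your framework supplies it as follows.
\begin{enumerate}
\item In a module all of whose composition factors are $L(n)$, the generalized $H$-eigenspaces for $n+2$ and $-n-2$ vanish. Hence $E$ and $F^{n+1}$ both kill the generalized $n$-eigenspace $V^{\mathrm{gen}}_n$.
\item Suppose $H-n$ were nonzero on $V^{\mathrm{gen}}_n$. Then you can choose $\vec v\in V^{\mathrm{gen}}_n$ with $\vec w=(H-n)\vec v\neq 0$ and $(H-n)\vec w=0$.
\item The identity $[E,F^{j}]=jF^{j-1}(H-j+1)$, together with $E\vec v=0$, gives $0=EF^{n+1}\vec v=(n+1)F^{n}\vec w$.
\item This is a contradiction: $\vec w$ is a genuine weight-$n$ vector with $E\vec w=0$, so your string computation gives $F^{n}\vec w\neq 0$.
\end{enumerate}
So $H$ acts by $n$ on $V^{\mathrm{gen}}_n$, and any preimage of $x^n$ there is the $\vec u$ you need. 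Consequently, diagonalizability of $H$ on all of $V$ is not needed as a first step; it follows afterwards from (a). Parts (b), (c) and (d) are fine as written.
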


\subsection{Posets and Hasse Diagrams}  A \emph{partially ordered set} or \emph{poset} is a set $P$ equipped with a reflexive, antisymmetric, and transitive relation $\preceq$.  There are additional relations $\prec$, $\succeq$, and $\succ$, which are all naturally defined in terms of $\preceq$.

 Given two elements $a$ and $b$ in a poset $P$ we say $a$ \emph{covers} $b$, written $a \gtrdot b$ if $a \succ b$ and if $c$ satisfies $a \succeq c \succeq b$ then either $c =a$ or $c =b$. In other words, $a$ covers $b$ if $a$ is bigger than $b$ and there is nothing in between them.  The data of which elements cover one another is called the \emph{cover relations} of $P$.  If $P$ is a finite poset, it is a standard exercise to show that $P$ is determined by its cover relations. 
 
We will be representing finite posets via their \emph{Hasse diagrams}. These are graphs drawn in the plane satisfying:

\begin{itemize}

\item The underlying graph has a vertex for each element in $P$, and an edge between $a$ and $b$ if $a \gtrdot b$.

\item The graph is drawn in the plane so that if $a \succ b$ then $a$ is higher in the plane than $b$ (i.e. its $y$-coordinate is larger)

\end{itemize}

As an example, if $P = \{1, 2, 3, 4, 12 \}$ ordered by divisibility then the Hasse diagram is: 

\begin{center}
\includegraphics[scale=0.3]{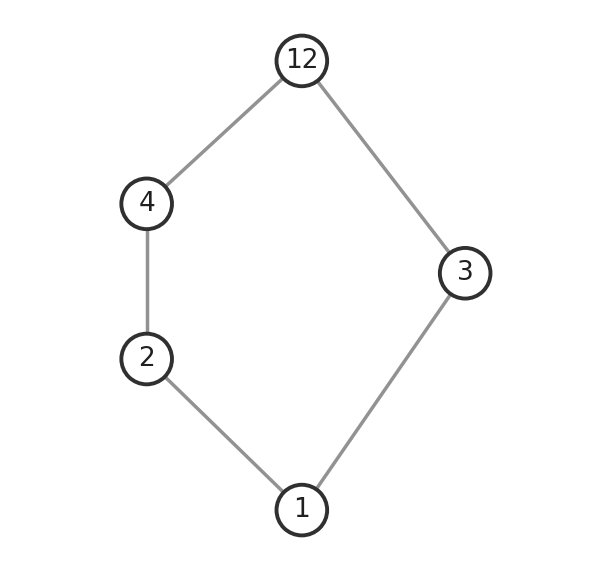}
\end{center}

We often won't really distinguish between the poset itself and its Hasse diagram.  In particular if we use graph theoretic terms like edges,  vertices, paths, or connected components when describing posets, we are always referring to a corresponding feature of the Hasse diagram. 

\subsection{Ranked posets}  A \emph{ranked} poset (sometimes called a \emph{graded} poset) is a poset $(P, \preceq)$ equipped with a rank function $\omega: P \to \mathbb{Z}$ such that 

\begin{enumerate}

\item If $a \succeq b$ then $\omega(a) \ge \omega(b)$. 

\item If $a \gtrdot b$ then $\omega(a) = \omega(b) + 2$.

\item For all $a, b \in P$ $\omega(a) \equiv \omega(b) \mod 2$

\end{enumerate}

Notice that we are allowing negative ranks and are taking steps of size two along cover relations ($\omega(a) = \omega(b) + 2$ instead of $\omega(a) = \omega(b) + 1$).  These are non-standard conventions, but they amount to reindexing from the usual notion. Namely, we scale the rank by a factor of $2$ and possibly shift it.  Note that if the Hasse diagram is connected then condition $3$ follows from the first two.  Ultimately we will want our rank function to agree with the weights of an $\mathfrak{sl}_2$-representation, which is why we make this convention. 

For ranked posets we impose an additional condition on their Hasse diagrams: the height of a vertex in the diagram is equal to the rank of the corresponding poset element. For example if $P = B(3)$ is the Boolean lattice on $3$ elements, there is a natural rank function $\omega(S) = |S| - |S^c|$ and the Hasse diagram looks like:

\begin{center}
\includegraphics[scale=0.3]{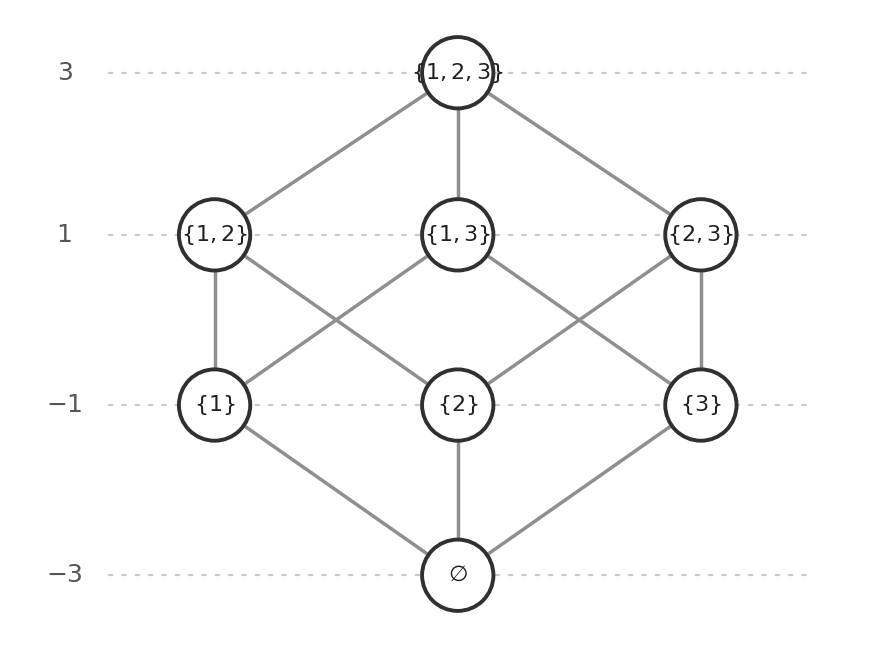}
\end{center}

For a ranked poset $P$ we let $P_n$ denote the set of rank $n$ elements, i.e. those $p \in P$ with $\omega(p) = n$. The \emph{rank vector} records the sizes $|P_n|$ of the nonempty ranks, starting from the highest non-empty rank.   We say that a ranked poset is \emph{rank symmetric} if $|P_n| = |P_{-n}|$ for all $n$. For $P$ connected and rank symmetric we say $P$ is \emph{unimodal} if $|P_n| \ge |P_{n+2}|$ for all $n \ge 0$.  Ranked posets which are supported in either all even or all odd ranks (which is automatic if they are connected) and are rank symmetric and unimodal are called \emph{RSU-posets}.  In the example above, the Boolean lattice $B(3)$ is an RSU-poset with rank vector $[1,3,3,1]$.

Under our conventions, given a ranked poset $P$, a \emph{symmetric chain decomposition (SCD)} of $P$ is a collection of chains (totally ordered subsets of $P$) satisfying:

\begin{enumerate}

\item Each chain is \emph{saturated}, meaning it does not skip any ranks:  For integers $\ell < m < n$ which are congruent modulo $2$, if a chain contains elements of rank $\ell$ and $n$, then it must contain an element of rank $m$ as well.

\smallskip

\item If the top element of a chain is in rank $k$, then the bottom element of that chain must be in rank $-k$.

\smallskip

\item Each element of the poset is contained in exactly one chain in the collection.
\end{enumerate}

\begin{proposition}\label{SCDimpliesstrong}
If a poset $P$ admits an SCD then $P$ admits a strong $\mathfrak{sl}_2$ action.  
\end{proposition}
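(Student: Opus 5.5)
The plan is to build the action chain by chain and take the direct sum, so that $\mathbb{C}P$ splits as a direct sum of copies of the irreducibles $L(k)$. Fix an SCD $\mathcal{C}$ of $P$. For a chain $C \in \mathcal{C}$ with elements $x_k \gtrdot x_{k-2} \gtrdot \dots \gtrdot x_{-k}$, where $\omega(x_j)=j$, the span $\mathbb{C}C$ has the same weights as $L(k)$. We identify $\mathbb{C}C$ with $L(k)$ by sending $x_j$ to a suitable nonzero multiple of the monomial $x^{(k+j)/2}y^{(k-j)/2}$.

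On this basis the operators have the shape required by the definition. Explicitly, define
$$E x_j = a_j x_{j+2}, \qquad F x_j = b_j x_{j-2}, \qquad H x_j = j\, x_j,$$
with $E x_k = 0$ and $F x_{-k}=0$. For the constants I would use the standard choice $a_j = 1$ and $b_j = \tfrac{(k+j)(k-j+2)}{4}$. This is what one gets from $E=x\frac{d}{dy}$, $F=y\frac{d}{dx}$ after rescaling the monomials. Then extend $E,F,H$ linearly to all of $\mathbb{C}P$. Since the chains partition $P$ (condition 3 of an SCD), $\mathbb{C}P=\bigoplus_{C\in\mathcal{C}}\mathbb{C}C$, so this is well defined.

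Next come the checks.

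\textbf{$H$.} $H$ scales each element by its rank, as required.

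\textbf{$E$ and $F$.} $E$ sends $x_j$ to a multiple of $x_{j+2}$. Because the chain is saturated (condition 1) and $x_{j+2} \succ x_j$ differ in rank by $2$, we get $x_{j+2}\gtrdot x_j$. Hence $E$ moves only along cover relations, and likewise $F$ moves only downward along covers. We are free to set every coefficient $e_{x,y}$ or $f_{x,y}$ that does not come from the chain to zero.

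\textbf{The relations $[H,E]=2E$ and $[H,F]=-2F$.} These hold because $E$ raises weight by $2$ and $F$ lowers it by $2$.

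\textbf{The relation $[E,F]=H$.} This is the only real computation. It suffices to check it on each $\mathbb{C}C$ separately, since that subspace is invariant. There it reads $a_{j-2}b_j - b_{j+2}a_j = j$, with the conventions $b_{-k}=0$ and $a_k=0$ at the ends. Symmetry of the chain (condition 2: top rank $k$, bottom rank $-k$) is exactly what makes the endpoint equations consistent, because the trace of $H$ on $\mathbb{C}C$ must vanish. With $a_j=1$ the identity becomes $b_j-b_{j+2}=j$, and the explicit formula above satisfies it with $b_{k+2}=0$ and $b_{-k}=0$. Alternatively, I would simply cite that $L(k)$ is a representation: the transported operators still satisfy the relations and still move only between adjacent chain elements.

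The main (minor) obstacle is making sure the boundary conditions close up, i.e.\ that the recursion $b_j-b_{j+2}=j$ started from $b_{k+2}=0$ gives $b_{-k}=0$. This holds because $\sum_{j} j = 0$ over the symmetric weights $k, k-2,\dots,-k$, and that is precisely where condition 2 of the SCD is used.
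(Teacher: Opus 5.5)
Your proposal is correct and is the same argument as the paper's: zero out every coefficient on edges not in the SCD, and transport the standard action on $L(k)$ onto each chain with top rank $k$. Your explicit constants and the boundary/trace check simply spell out what the paper leaves implicit. One harmless slip: the closed formula for $b_j$ gives $-(k+1)$ at $j=k+2$, not $0$, but $b_{k+2}$ only ever appears multiplied by $a_k=0$.
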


\begin{proof}
Given SCD on $P$, first set all $E$- and $F$-coefficients along edges not in the SCD equal to zero.  Then for a symmetric chain in the SCD of maximal rank $k$,  identify its elements with the standard basis for $L(k)$ to define an action along that chain.
\end{proof}

\subsection{Matrices and Bipartite Graphs} \label{graphdictionary} There is a natural correspondence between $n \times m$ matrices and bipartite graphs on $\{a_1, \dots , a_n\} \cup \{b_1, \dots b_m \}$ with weighted edges:  Given an $n \times m$ matrix $A$, if an entry $A_{i,j}$ is nonzero we add an edge with that weight from $b_j$ to $a_i$.  

\begin{center}
\includegraphics[scale=0.3]{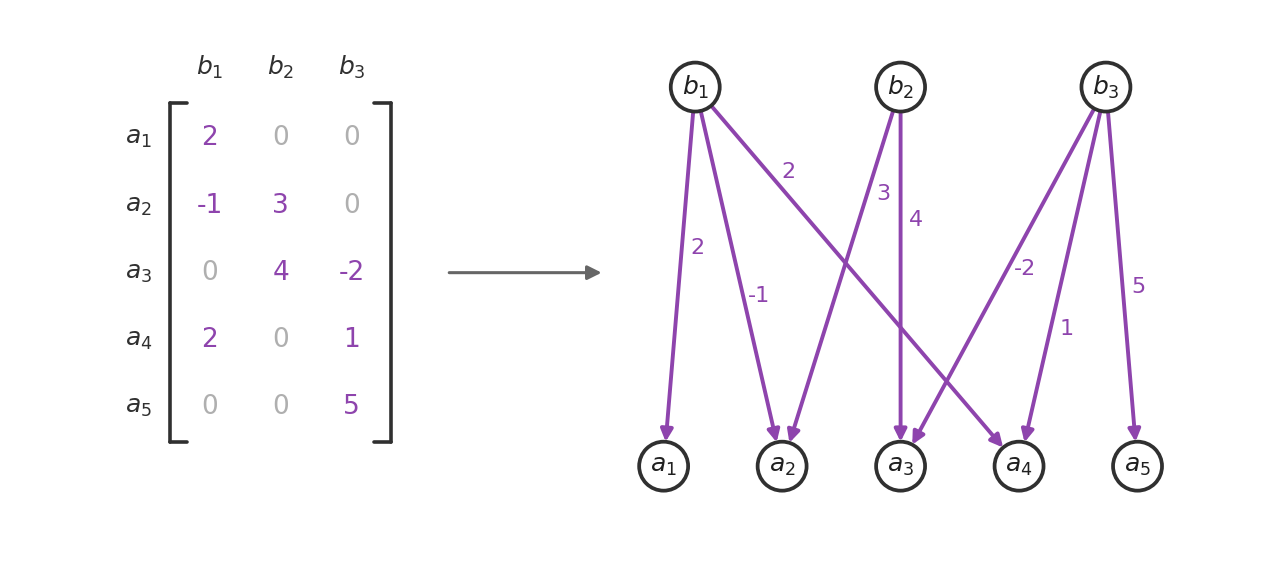}
\end{center}

\subsubsection{Minors and matchings:} Given subsets $I$ and $J$ of the rows and columns respectively,  we can look at the induced subgraph on those corresponding vertices.  If $|I| = |J|$ we can interpret the corresponding minor as an alternating sum over matchings on this induced subgraph.   In particular if such a minor is nonzero then there must be at least one matching between those vertices in the bipartite graph. 

\begin{center}
\includegraphics[scale=0.25]{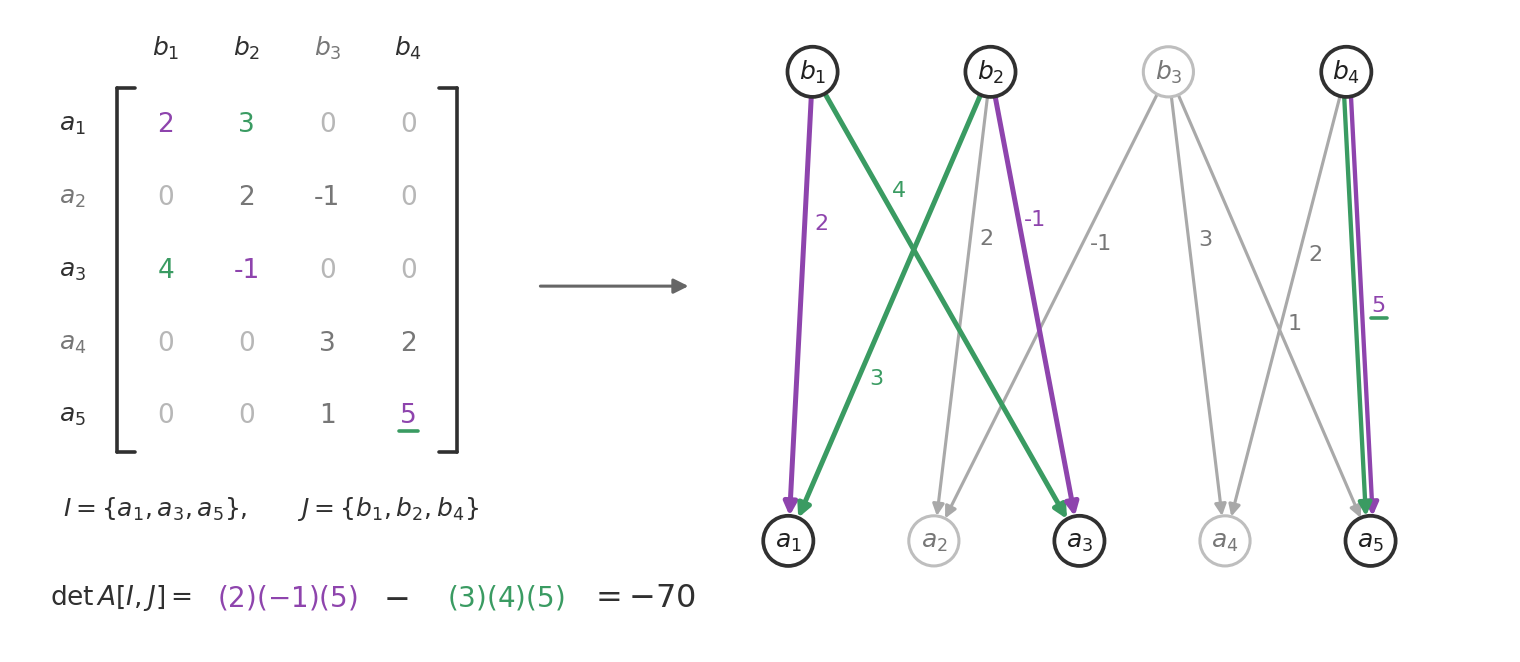}
\end{center}

\subsubsection{Multiplication and graph stacking:} Under this identification, matrix multiplication corresponds to the following operation:  First we stack the two bipartite graphs into a $3$-layer graph.  Then we construct a new bipartite graph just on the two outer layers of vertices,  with edge weights given by the sum over all length 2 paths in the 3-layer graph of the product of weights along those paths.

\begin{center}
\includegraphics[scale=0.25]{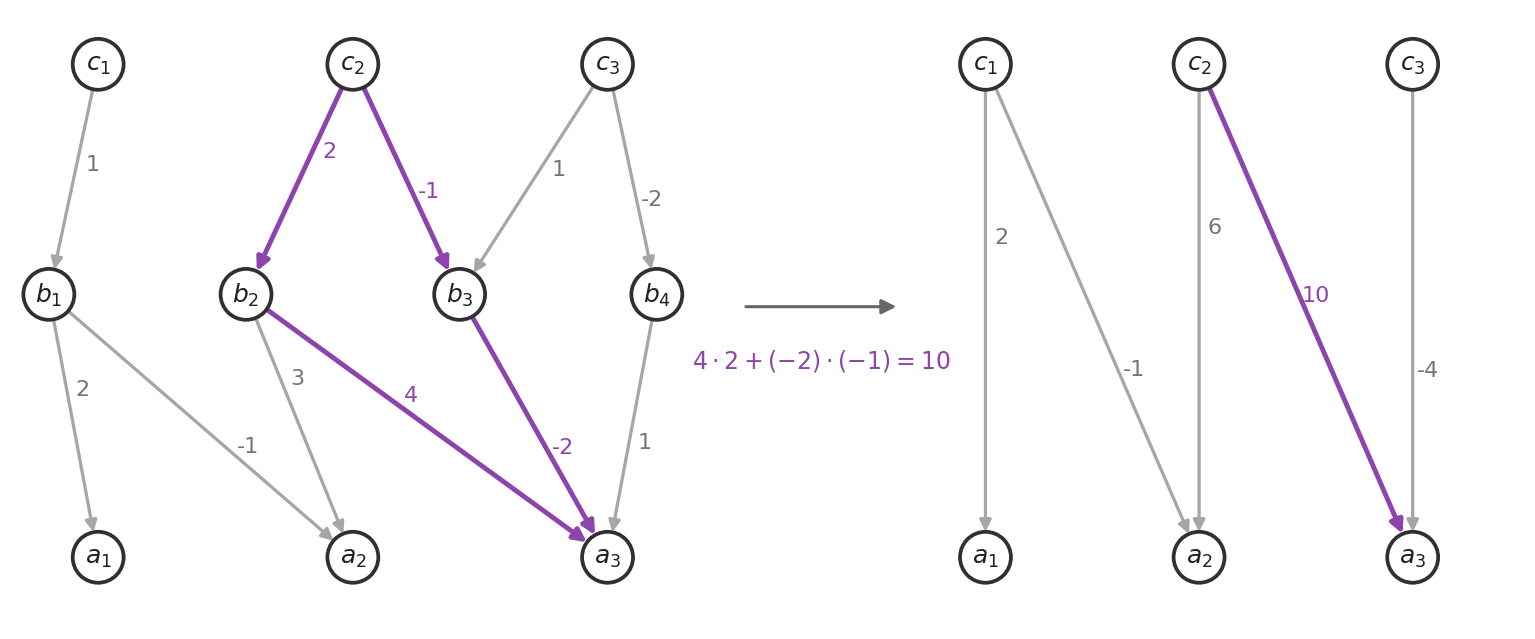}
\end{center}

\subsubsection{Cauchy-Binet and disjoint paths}  Combining these, the Cauchy-Binet formula expresses a minor of a product $AB$ of matrices as a sum of products of minors of $A$ and $B$.    A combinatorial manifestation of this is the Lindstr\"om–Gessel–Viennot lemma (see \cite{Lindstrom1973} and \cite{GesselViennot1985}) which computes a minor of a product of two or more matrices graphically by stacking the bipartite graphs and then taking a signed sum over collections of disjoint paths connecting $I$ to $J$ of the product of weights along those paths.

\begin{center}
\includegraphics[scale=0.25]{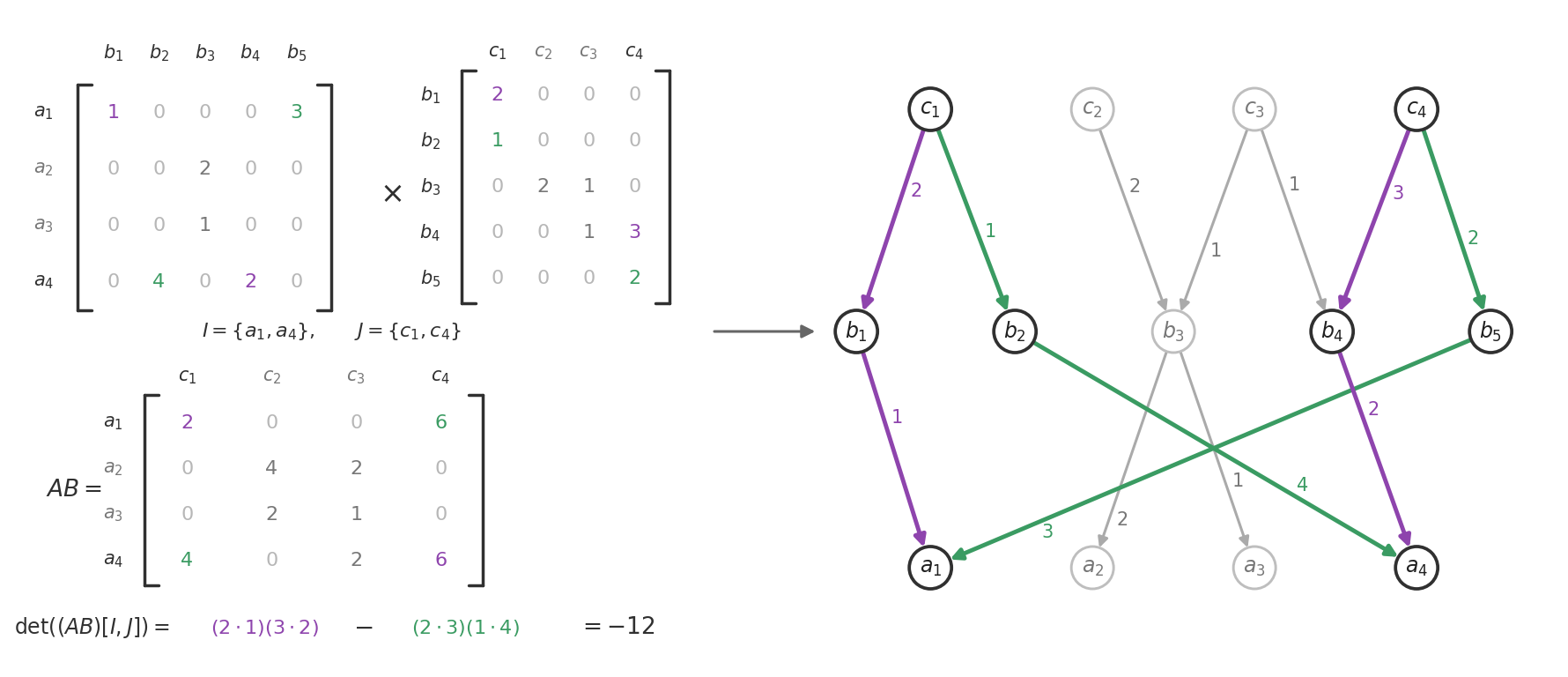}
\end{center}

In particular, if the $I \times J$ minor of a product is known to be nonzero then there exists at least one collection of disjoint paths connecting $I$ to $J$ in the stacked graph.  Or more generally, if the $I \times J$ submatrix of the product is known to have rank at least $k$, then we can find at least $k$ disjoint paths connecting $I$ to $J$.

\subsection{Peck posets} \label{peckintro}  We'll now introduce the main class of posets we will be interested in, so-called Peck posets.  If a poset admits an SCD then it is automatically Peck, and hence a failure to satisfy these weaker conditions is an obstruction to admitting an SCD.  However there are many examples of Peck posets which do not admit an SCD, and there seems to be very little in the literature towards understanding the failure of Peck posets to admit SCDs.

\subsubsection{The Sperner Property} We say a ranked poset has the \emph{Sperner property} if the every antichain has size at most the largest rank. 
\begin{center}
\includegraphics[scale=0.25]{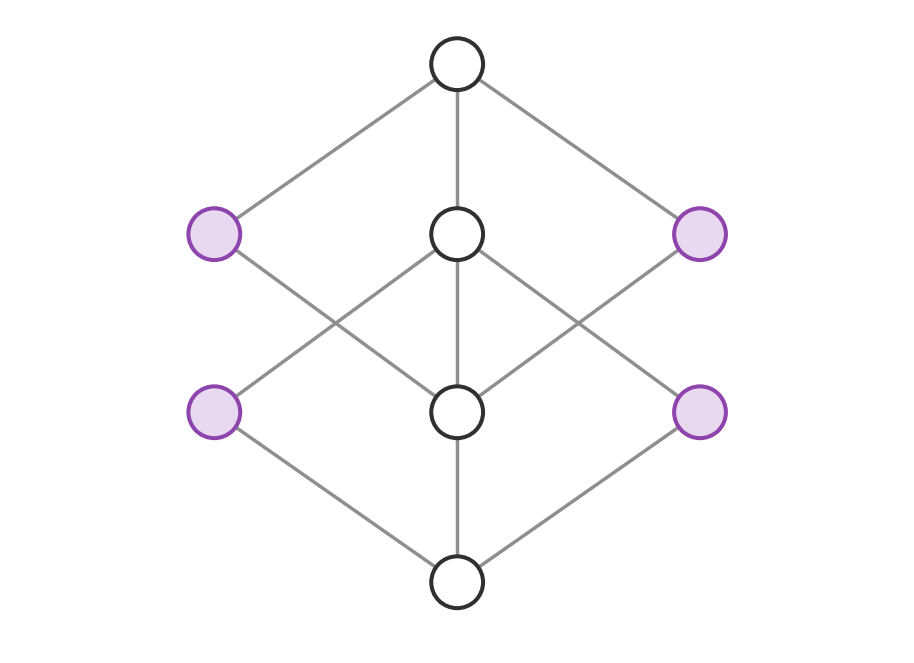}
\end{center}
In the above example the highlighted vertices form an antichain of size $4$, but the maximum rank size is $3$. Therefore this poset does not satisfy the Sperner property.  More generally, we say a ranked poset is \emph{strongly Sperner} if for all $k \ge 1$ every disjoint union of $k$ antichains has size at most the union of the $k$ largest ranks.

\begin{center}
\includegraphics[scale=0.25]{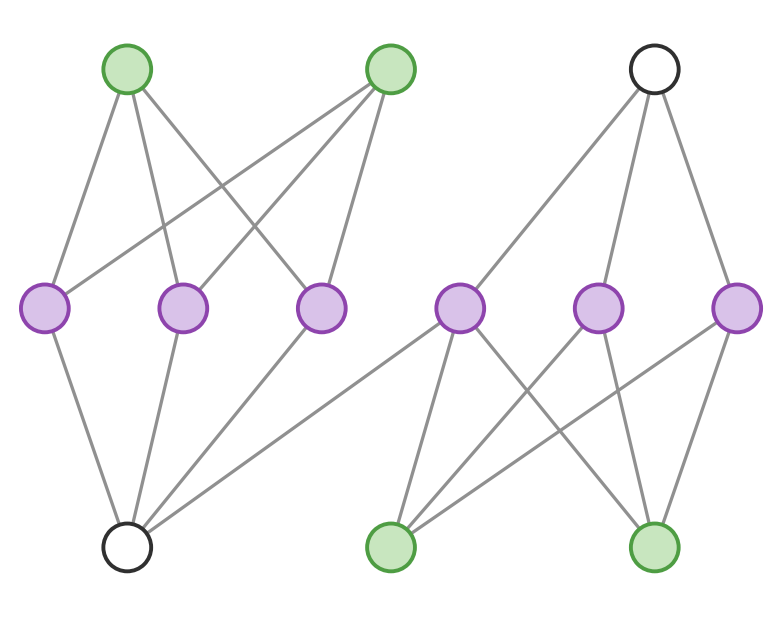}
\end{center}
In this example the largest antichain has size $6$, equal to the largest rank. However the largest union of two antichains has size $10$ which is larger than the union of the two largest ranks.  Therefore this poset is Sperner but not strongly Sperner.

\subsubsection{Griggs' property T} We say a rank symmetric poset $P$ satisfies \emph{Griggs' property T} if for each pair of opposite ranks $P_n$, $P_{-n}$ there exists a collection of disjoint, saturated chains beginning and ending at those ranks, containing every element in those ranks. 

\begin{center}
\includegraphics[scale=0.25]{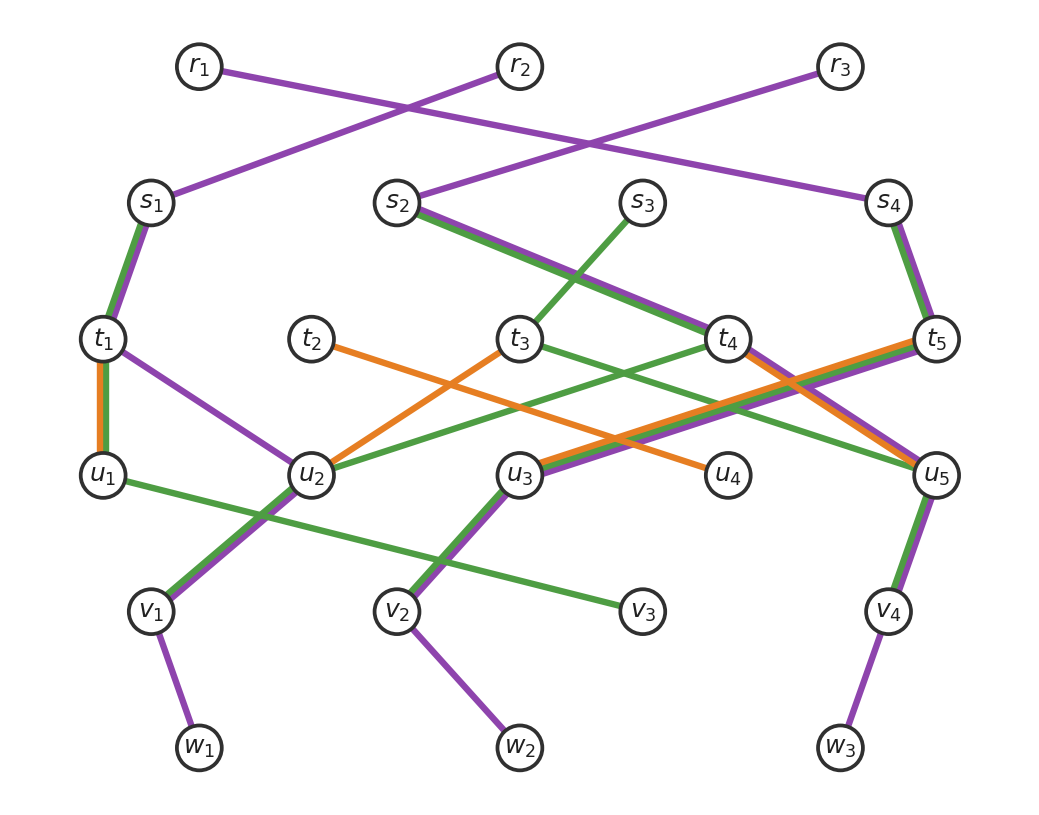}
\end{center}

Note that there does not need to be any compatibility between these collections of disjoint chains for different pairs of opposite ranks. In the above example, each color corresponds to a family of disjoint edges connecting opposite ranks.  The different families have some overlap but they do not form an SCD.

\subsubsection{Proctor's $\mathfrak{sl}_2$-actions} Following \cite{proctor82}, we say a ranked poset $P$ \emph{admits an $\mathfrak{sl}_2$-action} in the sense of Proctor (a weaker notion than the strong actions defined in the introduction) if there is an action of the Lie algebra $\mathfrak{sl}_2$ on the vector space $\mathbb{C}P$ such that:

\begin{enumerate}

\item $H$ scales elements $x \in P$ according to the rank: $Hx = \omega(x)x$, where $\omega$ is the rank function.

\item $E$ acts along cover relations:  $Ex = \sum_{y \gtrdot x} c_{x,y}y$ for some complex numbers $c_{x,y}$. 
\end{enumerate}
Note that $F$ is not required to act along cover relations (that would make it a strong action), but the $\mathfrak{sl}_2$-relations do ensure that if $x \in P_n$ then $Fx \in \mathbb{C}P_{n-2}$.  Here is an example of an $\mathfrak{sl}_2$-action on the poset from Example \ref{smallestexample}:

\begin{center}
\includegraphics[scale=0.3]{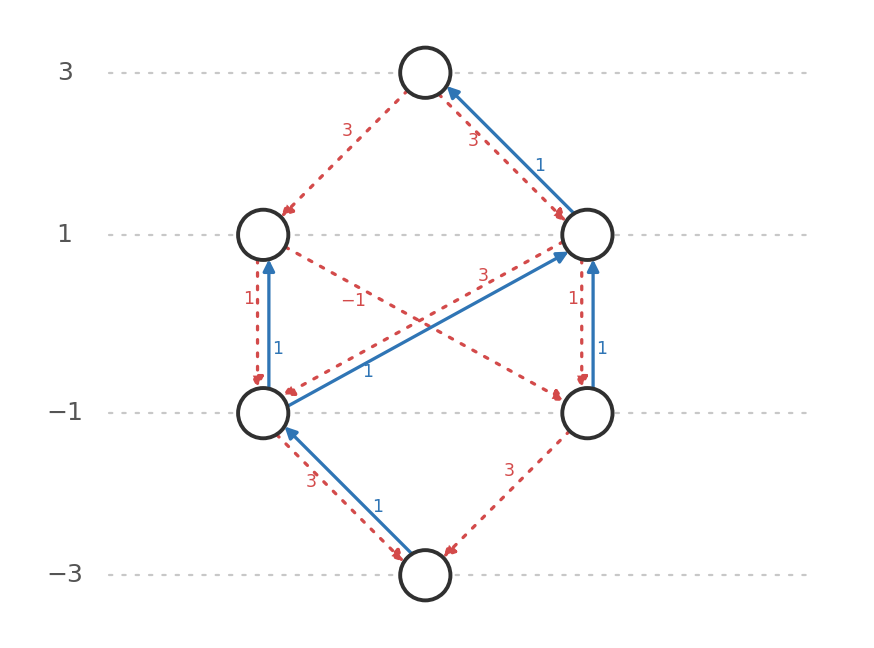}
\end{center}
The $F$ operator, represented with dotted red lines, does not follow the edges of the poset. Thus, the action depicted is not a strong $\mathfrak{sl}_2$-action on this poset.  Indeed, in Example \ref{smallestexample} we saw that this poset does not admit any strong action.  We'll note that it can be viewed as a strong action the larger poset obtained by adding the dotted red edges. Indeed any action in the sense of Proctor can be thought of as a strong action on a larger poset, by adding edges corresponding to nonzero $F$-coefficients. 

\medskip

\subsubsection{Peck poset definition}

\begin{defthm} \label{Peckdef} (Combining \cite{proctor82} and \cite{Griggs1980Chains}) Let $P$ be a rank-symmetric poset. The following are equivalent:

\begin{enumerate}

\item $P$ is strongly Sperner.

\item  $P$ satisfies Griggs' property $T$.

\item $P$ admits an $\mathfrak{sl}_2$-action in the sense of Proctor.

\end{enumerate}

\noindent Such posets are called Peck posets.
\end{defthm}

It is easy to see (using any of these conditions) that if a poset $P$ admits an SCD it is automatically Peck. However we see that Example \ref{smallestexample} is a Peck poset, which does not admit an SCD. We will see many more such examples in this paper.

\subsubsection{SCD-critical posets}

We say that a poset $P$ is an \emph{SCD-critical} poset if

\begin{enumerate}

\item $P$ is Peck. 

\item $P$ does not have an SCD

\item Adding any edge to the Hasse diagram creates an SCD. 

\end{enumerate}

Note that any Peck poset not admitting an SCD can always be enlarged to an SCD-critical poset.  So to prove Conjecture \ref{theconjecture} it would suffice to prove it for SCD-critical posets.   For small enough rank vectors we can sometimes just enumerate all of the SCD-critical posets to verify the conjecture,  although this quickly becomes computationally unfeasible.

\subsection{$\mathfrak{sl}_2$ support graphs} \label{supportgraphs} Let $V$ be a finite dimensional representation of $\mathfrak{sl}_2$ with a weight basis $\{b_1, b_2, \dots , b_n \}$. We define a ranked poset as follows:

\begin{itemize}

\item The elements of the poset are the basis elements $b_1, b_2, \dots, b_n$.

\item The rank function $\omega(b_i)$ is the weight of $b_i$ (its $H$-eigenvalue).

\item Cover relations $b_i \gtrdot b_j$ if either $b_j$ appears with nonzero multiplicity in the expansion of $Fb_i$, or $b_i$ appears with nonzero multiplicity in the expansion of $Eb_j$.

\end{itemize}

By construction we see this poset admits a strong $\mathfrak{sl}_2$-action.  These posets (or the underlying graphs of their Hasse diagrams), along with the numerical data of the $E$- and $F$- coefficients along each edge are called $\mathfrak{sl}_2$-\emph{support graphs}.  For example if we take a standard basis $\{v_1, v_{-1}\}$ for $L(1)$,  we get a natural tensor product basis $\{ v_{\pm 1} \otimes v_{\pm 1} \otimes v_{\pm 1} \}$ for $L(1)^{\otimes 3}$ with a support graph that looks like:

\begin{center}
\includegraphics[scale=0.3]{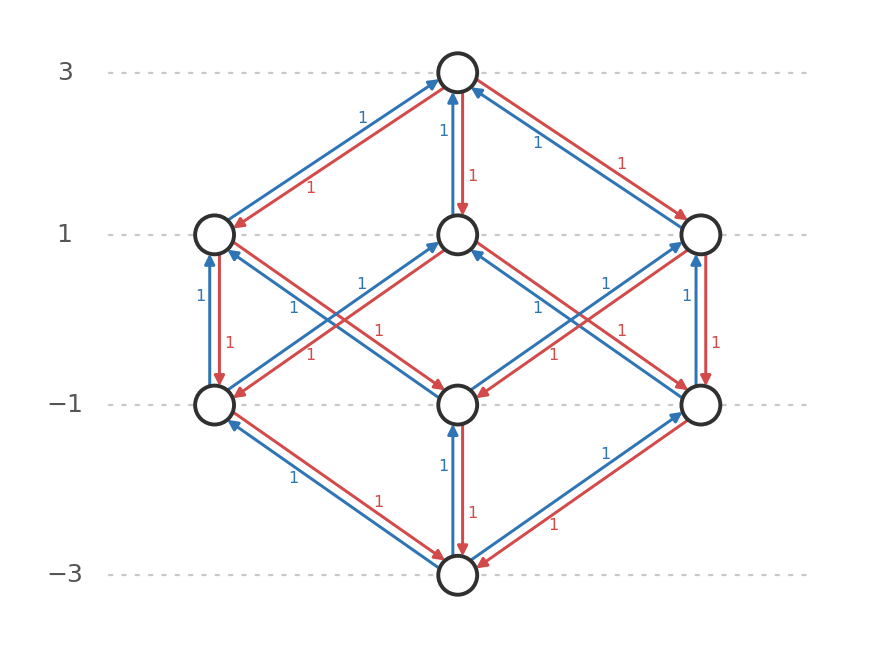}
\end{center}

 These were introduced by Donnelly in \cite{Donnelly1998Solitary} (and further developed in \cite{Donnelly2003Extremal}) along with analogs for all semisimple Lie algebras $\mathfrak{g}$, where instead of a poset we get a directed graph with multiple edge colors, one for each simple root in the root system associated to $\mathfrak{g}$.  Donnelly asked whether support graphs for a semisimple Lie algebra $\mathfrak{g}$ always contain a combinatorial $\mathfrak{g}$-crystal. For $\mathfrak{g} = \mathfrak{sl}_2$ that simply means every support graph admits an SCD.  Since any poset admitting a strong $\mathfrak{sl}_2$-action contains an $\mathfrak{sl}_2$-graph in its support (just delete any edges unused by the action), we see that an affirmative answer to the $\mathfrak{sl}_2$ case of Donnelly's question is equivalent to our formulation of Conjecture \ref{theconjecture}.

\subsection{Multicommodity Flows and Fractional SCDs} The existence of an SCD in a ranked poset can be formulated as a multicommodity flow problem as follows:

\begin{enumerate}

\item Make the Hasse diagram directed by orienting all edges downward.

\smallskip

\item For each $n \ge 0$ add an auxiliary vertex $x_n$ with edges to every element of $P_n$, and an auxiliary $y_n$ with edges from every element of $P_{-n}$.

\smallskip

\item Make $x_n$ and $y_n$ a source and sink respectively for commodity $n$, set the demand between them to $|P_n| - |P_{n+2}|$.

\smallskip

\item Give all vertices in the original poset total capacity $1$. 

\end{enumerate}

\begin{center}
\includegraphics[scale=0.3]{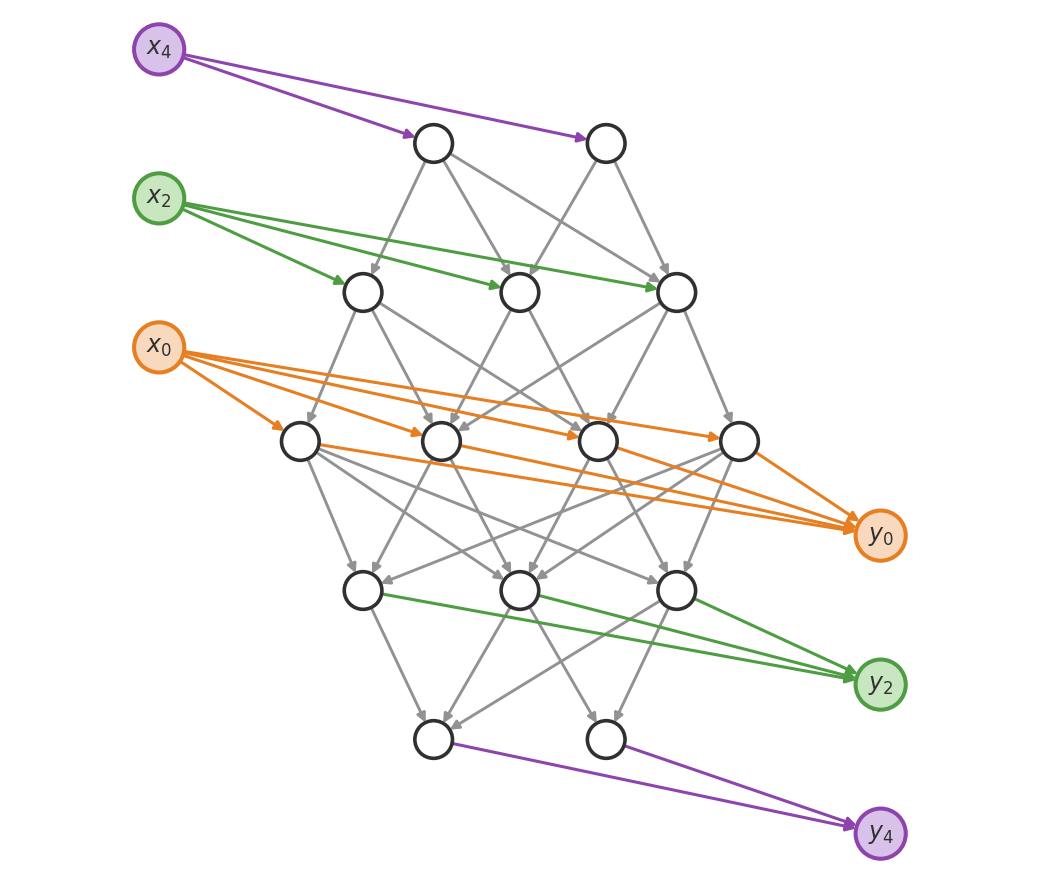}
\end{center}

The observation is that integer flow solutions to this flow problem exactly correspond to SCDs on the original poset. It's then natural to ask:  What do fractional flow solutions correspond to?

\medskip

\noindent \textbf{Definition:} Let $P$ be a ranked poset, and let $C(P)$ denote the set of symmetric saturated chains in $P$.  A \emph{fractional symmetric chain decomposition} is a function $\tau: C(P) \to \mathbb{R}_{\ge 0}$ such that for all $p \in P$
 $$\sum_{c \in C(P),  c \ni p} \tau(c) = 1.$$
 In other words, a fractional SCD of $P$ is a collection of weighted symmetric saturated chains in $P$ such that each vertex is covered by chains of total weight $1$. 
 
As far as we can tell this is the first time this notion is being defined explicitly. We will note though that the recent preprint \cite{DilliesJohnsonKowalska2025} uses coverings by weighted, but not necessarily symmetric, chains extensively. Moreover, in Appendix B they implicitly use fractional SCDs to give a new proof of Sperner's theorem. Our Proposition \ref{fracpeck} below is essentially a generalization of that argument. 
 
  Note that if all weights are rational we can renormalize and say that a fractional SCD is a collection of symmetric saturated chains covering every vertex of $P$ the same number of times.  We'll see that in fact if any fractional SCD exists then a rational one does too, so effectively we could have taken that as an alternative definition.

 \begin{proposition}
 
 For a ranked poset $P$ the associated multicommodity flow problem described above has a (not necessarily integral) solution if and only if $P$ admits a fractional SCD. 
 \end{proposition}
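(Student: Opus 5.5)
For the direction from a fractional SCD to a flow, I would simply route the chain weights. Given a fractional SCD $\tau$, let $c \in C(P)$ have top element in rank $k \ge 0$. Such a chain is symmetric, so it runs from $P_k$ down to $P_{-k}$. I send $\tau(c)$ units of commodity $k$ along the path $x_k \to c_{\text{top}} \to \cdots \to c_{\text{bottom}} \to y_k$, which is legal because consecutive chain elements are cover relations and edges are oriented downward. The total throughput at each vertex $p$ is then $\sum_{c \ni p} \tau(c) = 1$, so the capacity constraints hold.

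It remains to check the demands. For commodity $n$, the amount shipped is $D_n := \sum_{c \text{ top in } P_n} \tau(c)$. Every chain through an element of $P_n$ has top in some rank $m \ge n$, so summing the covering condition over $P_n$ gives $|P_n| = \sum_{m \ge n} D_m$. Taking differences yields $D_n = |P_n| - |P_{n+2}|$, which is exactly the demand. (I will interpret the vertex capacity as the amount of flow passing through the vertex, and the demand as the exact amount routed.)

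For the converse, from a flow to a fractional SCD, I would use flow decomposition. Since the graph is acyclic (all edges point downward, and the auxiliary vertices are pure sources or sinks), each commodity's flow decomposes into nonnegative weights on $x_n$–$y_n$ paths. Such a path enters the poset at $P_n$, moves down one rank per step along cover relations, and exits at $P_{-n}$. It is therefore a saturated symmetric chain, and I set $\tau(c)$ to be the sum of the weights of the paths whose interior is $c$.

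The delicate point is the covering condition: each vertex must be covered by weight exactly $1$, not merely at most $1$. I would get this by a counting argument. The total flow is $\sum_{n \ge 0} (|P_n| - |P_{n+2}|) \cdot (n+1)$, counting vertices on chains with top in $P_n$ (with rank steps of $2$, the number of ranks between $n$ and $-n$). I need this to equal $|P|$. That identity holds when $P$ is rank-symmetric with rank-unimodal differences being nonnegative, since each $p \in P_j$ is counted once for each $n \ge |j|$, and the telescoping sum gives $|P_{|j|}| = |P_j|$.

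I expect that step to be the main obstacle: using symmetry to justify the equality is where the hypotheses enter, namely the sign and parity conventions together with rank symmetry. If $P$ is not rank-symmetric or unimodal, then neither a fractional SCD nor a feasible flow exists (negative demands are impossible). I would handle that case separately at the start, using the rank-counting identity from the first direction applied to the flow side. Once the total throughput equals $|P|$ and each vertex carries at most $1$, every vertex carries exactly $1$, and $\tau$ is a fractional SCD.
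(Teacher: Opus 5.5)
Both of your directions match the paper's proof. One routes each chain's weight as commodity-$k$ flow; the other decomposes the flow into $x_n$--$y_n$ paths, which are symmetric saturated chains, and the paper's iterative peeling is exactly this decomposition. Your check that the routed flow meets the demands $|P_n|-|P_{n+2}|$ is a detail the paper leaves implicit.

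The genuine problem is how you get exact covering. You read the vertex capacity as an upper bound of $1$ on throughput, and then assert that a poset which is not rank-symmetric admits no feasible flow. Under that reading the assertion is false. Negative demands only rule out failures of unimodality. For $n\ge 0$, conservation shows that the flow through $P_n$ and the flow through $P_{-n}$ both equal $\sum_{m\ge n}D_m=|P_n|$, so capacity at most $1$ yields only $|P_{-n}|\ge|P_n|$.

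Here is a counterexample to your reading. Let $P=\{a,b,c\}$, with $a$ of rank $1$ covering $b$ and $c$ of rank $-1$. Then $D_1=1$ and all other demands vanish. One unit along $x_1\to a\to b\to y_1$ respects every capacity. But $P$ has no fractional SCD: covering $b$ and $c$ with weight $1$ each forces weight $2$ on $a$. So with your reading the proposition is simply false for such posets, and no preliminary case analysis can repair it.

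The repair is to use the flow problem as the paper formalizes it in Section~\ref{flowpolytope}, where the vertex condition is the equality $\sum_c\sum_{e\to v}f_{e,c}=1$. The network is acyclic and conservation holds at every poset vertex, so the decomposition exhausts all flow. Each vertex is then covered by chains of total weight exactly $1$ with no further work. Your counting identity $\sum_{n\ge 0}(|P_n|-|P_{n+2}|)(n+1)=|P|$ and the rank-symmetry hypothesis become unnecessary. That counting argument is correct as far as it goes: it shows the two readings agree when $P$ is rank-symmetric. But the proposition is stated for arbitrary ranked $P$, and rank symmetry should come out of the argument rather than be assumed.
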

 
 \begin{proof}
 One direction is clear, given a fractional SCD we get a solution to the flow problem by flowing each commodity along the chains of the corresponding length according to the weight of that chain.  The fact that each vertex is covered by chains of weight $1$ translates to each vertex in the poset having one total unit of flow, satisfying the capacity.
 
 Going the other direction, starting with a flow we will construct a fractional SCD one weighted chain at a time.  Note that if there is flow from $x_i$ to $y_i$ we can find a path connecting them with nonzero flow.  This path intersects the poset in a symmetric chain with nonzero flow along each edge.  Here's the procedure:
 
 \begin{enumerate}
 
 \item Pick $x_i$ and $y_i$ with a nonzero amount of flow between them, and choose a path connecting them with nonzero flow along it. 
 
  \item Add the corresponding chain to the fractional SCD with weight equal to the smallest flow along any edges involved.
  
  \item Remove that amount of flow from each edge along the path, creating a new smaller flow.
  
  \item Repeat this process.
  
 \end{enumerate}
 
 At every step the flow of the chosen commodity becomes zero on at least one additional edge, so this process terminates after finitely many steps.
 \end{proof}

\noindent \textbf{Remark:} Note that this is not a bijection.  Every fractional SCD gives rise to a flow in a unique way, but many fractional SCDs can give rise to the same flow. The procedure above for converting a flow into a fractional SCD depends heavily on the choices of which commodity to look at and which path to choose at every step.

\subsubsection{An example of a fractional SCD} Every SCD is automatically a fractional SCD -- give the chains in the SCD weight $1$ and all others weight $0$.  Moreover, given two or more SCDs of the same poset, a convex combination of them will be a fractional SCD.  However not all fractional SCDs come from ordinary SCDs. Here is an example which cannot be obtained that way:

 \begin{center}
\includegraphics[scale=0.3]{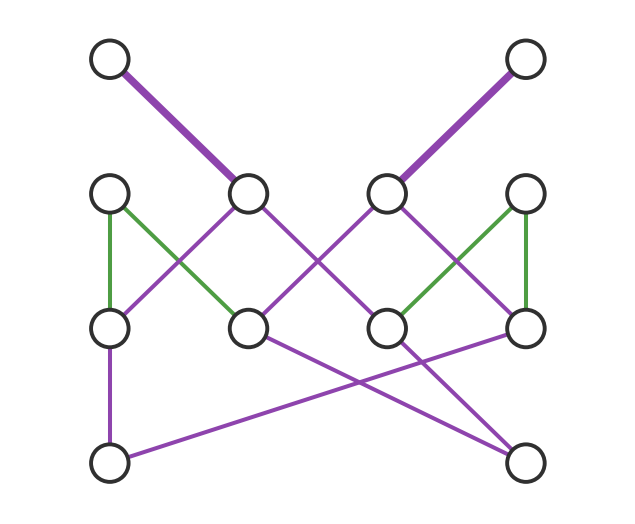}
\includegraphics[scale=0.3]{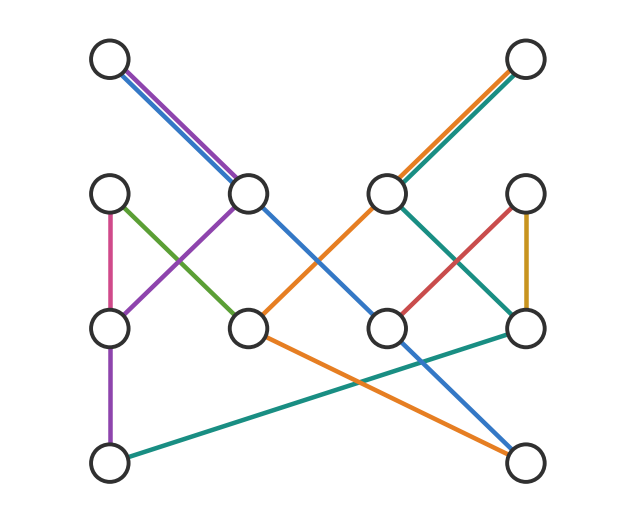}
\end{center}

On the left we have it depicted as a flow:  the bold edges at the top carry $1$ unit of flow, and the remaining edges carry $\frac{1}{2}$ a unit of flow.  We have a total of $2$ units of purple flow, which begin at rank $3$ and end at rank $-3$, and $2$ units of green flow going from rank $1$ to rank $-1$.   On the right we show it as a collection of $8$ symmetric saturated chains, which collectively cover each vertex exactly twice.   One can easily verify that this poset does not admit an ordinary SCD. We'll see in Example \ref{example2442} that this poset does not admit a strong $\mathfrak{sl}_2$-action either.

\subsubsection{Fractional SCD implies Peck}

Recall that if a poset $P$ admits an SCD then it is automatically Peck.  We have the following generalization:

\begin{proposition} \label{fracpeck}
If $P$ is a poset admitting a fractional SCD, then $P$ is Peck. 
\end{proposition}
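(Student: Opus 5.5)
We plan to prove that $P$ is strongly Sperner and then invoke Definition/Theorem \ref{Peckdef}. This mirrors the classical argument that an SCD forces the strong Sperner property, with integer counting replaced by weighted averaging. Two preliminary facts come first. Rank symmetry of $P$ follows from the fractional SCD: each symmetric chain through rank $n$ meets rank $-n$ exactly once, so summing the coverage condition over $P_n$ and over $P_{-n}$ gives
$$|P_n| = \sum_{c} \tau(c)\,[c \text{ meets } P_n] = |P_{-n}|.$$
Similarly, each symmetric chain meeting $P_{n+2}$ with $n \ge 0$ also meets $P_n$. Hence $|P_n| \ge |P_{n+2}|$, so $P$ is unimodal.

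Now let $A = A_1 \sqcup \dots \sqcup A_k$ be a disjoint union of $k$ antichains. The key step is the double count
$$|A| = \sum_{p \in A} \sum_{c \ni p} \tau(c) = \sum_{c \in C(P)} \tau(c)\,|c \cap A|.$$
Since a chain meets each antichain at most once, $|c\cap A| \le \min(k, |c|)$, and therefore
$$|A| \le \sum_c \tau(c)\min(k,|c|).$$
It remains to identify the right-hand side with the size of the union of the $k$ largest ranks. Let $R$ be the union of the $k$ ranks nearest to $0$; by rank symmetry and unimodality these are $k$ largest ranks. Each symmetric saturated chain $c$ is centered at rank $0$ (or at the middle ranks $\pm1$ in the odd case) and occupies a contiguous symmetric interval of ranks. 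Consequently $|c \cap R| = \min(k,|c|)$ exactly: if $|c| \ge k$ it contains all $k$ central ranks, and otherwise it lies entirely inside $R$. Applying the same double count to $R$ gives
$$|R| = \sum_c \tau(c)\,|c\cap R| = \sum_c \tau(c)\min(k,|c|) \ge |A|,$$
which is strong Sperner.

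The main care point is the claim $|c\cap R| = \min(k,|c|)$ in the parity cases. When $P$ is supported in odd ranks there is no rank $0$; the $k$ central ranks are then chosen symmetrically as far as possible, and for odd $k$ one extra rank, say $P_{k}$ or $P_{-k}$, is added. A symmetric chain of length at least $k$ still contains that rank, so the identity survives. With strong Sperner established, together with rank symmetry, Definition/Theorem \ref{Peckdef} gives that $P$ is Peck.
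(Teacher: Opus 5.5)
Your proof is correct and is essentially the paper's argument: rank symmetry and unimodality come from writing the rank vector as a $\tau$-weighted sum of chain rank vectors, and the strong Sperner property comes from the double count $|A| = \sum_c \tau(c)|c\cap A| \le \sum_c \tau(c)\min(k,|c|) = |R|$. Your explicit choice of $R$ as the $k$ central ranks, together with the parity check, justifies the identity $\min(|c|,k) = |c\cap D_k|$, which the paper states without comment.
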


\begin{proof}

First observe that $P$ must be rank-symmetric and unimodal, since the rank vector is the weighted sum of the rank vectors for the chains in the fractional SCD. We will check the strong Sperner condition directly.  Fix a fractional SCD $\tau: C(P) \to \mathbb{R}_{\ge 0}$ and let $S$ be a disjoint union of $k$ antichains. 

 Let $D_k$ denote the union of the largest $k$ ranks.  For any symmetric chain $c \in C(P)$ note that $|c \cap S| \le \min(|c|,k) =  |c \cap D_k|$.  Therefore:

$$|S| = \sum_{x \in S} \sum_{c \ni x} \tau(c) = \sum_{c \in C(P)} \tau(c)|S \cap c| \le \sum_{c \in C(P)} \tau(c)|c \cap D_k| = \sum_{y \in D_k} \sum_{c \ni y} \tau(c)  = |D_k|$$
\end{proof}

\subsubsection{The flow polytope} \label{flowpolytope} One advantage of this perspective is that fractional flows can naturally be identified with points inside a polytope, with SCDs corresponding to integer points.  For general references on integer programming and polyhedral realizations of multicommodity flow problems we refer to \cite{AhujaMagnantiOrlin1993} and \cite{Schrijver2003}. In our context, the \emph{flow polytope} is defined as follows:

\begin{itemize}

\item For every edge $e$ in the augmented poset and every commodity $c$ we have a variable $f_{e,c}$ representing the amount of that commodity flowing along that edge. 
$$ f_{e,c} \ge 0  \hspace{1cm} \text{and}  \hspace{1cm} \sum_c f_{e,c} \le 1  \text{ for all edges } e $$ 

\item For every vertex in the original poset the amount of flow in equals the amount of flow out for each commodity:

$$\sum_{e \to v} f_{e,c} = \sum_{v \to e} f_{e,c} \text{ for all vertices } v \text{ and commodities }  c$$

\item The total flow into each vertex in the original poset is $1$.

$$ \sum_c \sum_{e \to v} f_{e,c} = 1$$

\item The auxiliary vertices $x_n$ and $y_n$ are sources and sinks for commodity $c_n$ with appropriate demands $D_n$. 

$$ \sum_{x_n \to e} f_{e,c_n} = D_n  \hspace{1cm}  f_{e,c_m} = 0 \text{ if }  e  \text{ comes out of } x_n \text{ and } m \ne n$$

$$ \sum_{e \to y_n} f_{e,c_n} = D_n  \hspace{1.1cm}  f_{e,c_m} = 0 \text{ if }  e  \text{ goes in to } y_n \text{ and } m \ne n$$

\end{itemize}

In particular this means one can use linear programming and integer programming techniques to determine whether a poset admits a fractional or ordinary SCD.  For larger posets this was by far the most efficient way we found to test them, see Section \ref{SCDtesting} for more discussion. 

\subsubsection{Some questions about fractional SCDs} As far as we can tell this is the first time that this notion explicitly appears in the literature. While this isn't the main focus of this paper, we do think fractional SCDs are quite interesting so we'd like to include some questions about them we think are interesting: 

\medskip

\noindent \textbf{Open Problem:} \emph{Do the Young lattices $L(m,n)$, and more generally quotients $B_n / G$ of the Boolean lattice for $G \subseteq S_n$, always admit fractional SCDs?}

\smallskip

The analogous questions for ordinary SCDs are famously open, but fractional SCDs are more flexible so this could be an interesting intermediate result. Likely this could be verified computationally for more values of $m$ and $n$ too. 

\smallskip

\noindent \textbf{Open Problem:} \emph{Can the existence of a fractional SCD on a Peck poset be characterized by a max-flow-min-cut type criterion?}

\smallskip

In general for multicommodity flow problems we do not expect this to be true, but for this very restricted class of networks it seems conceivable that there is a nice characterization.

\smallskip 

\noindent \textbf{Open Problem:} \emph{For a poset which does admit a fractional SCD, can we bound the minimum denominator needed for a rational solution in terms of the size or number of ranks in the poset?}

\smallskip

Since the flow polytope is defined by integer equations, if any real point exists, so does a rational one. In all of the small examples we looked at, if there was a fractional SCD at all then there was one with denominator at most $2$, although we expect that may grow with the number of ranks.

\end{section}

\begin{section}{Combinatorics of strong $\mathfrak{sl}_2$-actions}\label{combinatorics}
In this section we will explore how the combinatorics of a poset can give us linear algebraic information about any strong $\mathfrak{sl}_2$-actions on the poset (and vice-versa).  Although perhaps a better title for this section might be ``how to tell when a Peck poset does not admit a strong  $\mathfrak{sl}_2$-action".

All examples in this section are Peck posets which do not admit symmetric chain decompositions, and we will be showing that they do not admit strong $\mathfrak{sl}_2$-actions.  Other than the last example in the section, these posets were all part of randomly generated batches, and then we selected ones to highlight particular proof ideas. 

By Proposition \ref{SCDimpliesstrong}, a poset not admitting a strong $\mathfrak{sl}_2$-action cannot admit a symmetric chain decomposition. Nevertheless, we encourage the reader to try to prove directly that the posets here do not admit symmetric chain decompositions. In particular, all of these proofs were inspired by similar (and usually simpler) arguments to show the posets do not admit a symmetric chain decomposition, and one can often reverse engineer that original argument. 

\subsection {Isotypic vectors}
We say that a nonzero vector $\vec{v}$ in an $\mathfrak{sl}_2$ representation $V$ is an \emph{$L(\lambda)$-isotypic vector} if $\vec{v}$ lies in the $L(\lambda)$-isotypic component.  If $\vec{v}$ is a weight vector, this is equivalent to $\vec{v}$ generating an irreducible subrepresentation of $V$ with highest-weight $\lambda$.  If we don't want to specify which component we are talking about we will just say ``$\vec{v}$ is an isotypic vector". 

A first observation is that if a poset $P$ admits a strong $\mathfrak{sl}_2$-action then we can tell that certain vectors must be isotypic vectors, even without finding the whole action.  The fact then that a nonzero vector cannot be in two different isotypic components gives us an easy obstruction. Here is a simple example of this type of reasoning:

\begin{example} \label{example1441} The poset  
\begin{center}
\includegraphics[scale=0.25]{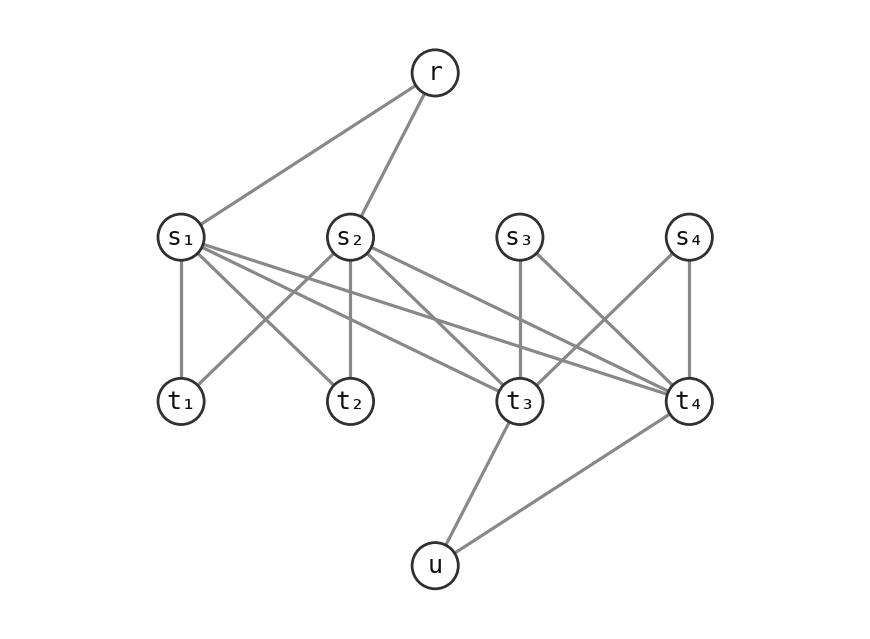}
\end{center}
 does not admit a strong $\mathfrak{sl}_2$-action
\end{example}

\begin{proof} Assume there is a strong $\mathfrak{sl}_2$-action on this poset.  The elements $t_1$ and $t_2$ have no downwards neighbors, so they are both lowest-weight vectors in the $L(1)$ component.  $E$ is injective between ranks $-1$ and $1$, and $t_1$ and $t_2$ have exactly two upwards neighbors altogether.  Hence, $\text{span}(E(t_1), E(t_2)) = \text{span}(s_1, s_2)$. Therefore $\text{span}(s_1, s_2)$ consists entirely of $L(1)$-isotypic vectors. 

On the other hand, $r$ has no upwards neighbors so it is a highest-weight $L(3)$-isotypic vector.  So then $F(r)$ is a nonzero $L(3)$-isotypic vector, which is a contradiction since $F(r) \in \text{span}(s_1, s_2)$ so we know it must be a $L(1)$-isotypic vector by the previous paragraph.
\end{proof}

\subsubsection{Poset elements as isotypic vectors}\label{isotypicelements}   Suppose $P$ is a poset which admits a strong $\mathfrak{sl}_2$-action. Here are some ways we can tell from the combinatorics of $P$ when certain elements must be isotypic vectors: 

\medskip

\begin{itemize}

\item \textbf{Highest-weight and lowest-weight vectors:}  If $\vec{v}$ is in rank $\lambda$ and $E(v) = 0$ then $\vec{v}$ is highest-weight vector, and therefore $L(\lambda)$-isotypic.  In particular if $p \in P$ has no upwards neighbors in $P$ then this must happen.   The story is similar for lowest-weight vectors. 

\medskip

\item \textbf{Unique neighbor propagation:} Suppose $\vec{v}$ is an $L(\lambda)$-isotypic vector in rank $k  > -\lambda$, then $F(\vec{v})$ is nonzero and $L(\lambda)$-isotypic.   So if $p \in P_k$ (with $k > -\lambda$) is known to be $L(\lambda)$-isotypic and $p$ has a unique neighbor $q$ in $P_{k-2}$, then $q$ is also $L(\lambda)$ isotypic.    Similarly we can propagate upwards along using $E$, with the opposite restriction on ranks.  

\medskip

\item \textbf{Hall-tight propagation:} More generally suppose $p_1, p_2, \dots, p_m \in P_k$ are$L(\lambda)$-isotypic with $k > -\lambda$. Suppose also that they collectively have exactly $m$ neighbors $q_1, q_2, \dots q_m$  in $P_{k-2}$. Then  $\text{span}(q_1, \dots q_m)  = \text{span}(F(p_1), \dots, F(p_m))$. Therefore, all of the $q_i$'s are $L(\lambda)$-isotypic too. Again, a similar rule applies in the other direction using $E$. 

\medskip

\item \textbf{Reverse propagation:} In general $F(\vec{v})$ being $L(\lambda)$-isotypic does not imply that $\vec{v}$ is also $L(\lambda)$-isotypic, because $\vec{v}$ could differ from the $L(\lambda)$-isotypic component by an element of the kernel of $F$.  However if $F$ is injective from rank $k$ to $k-2$ and $p \in P_k$ has all of its neighbors in $P_{k-2}$ known to be $L(\lambda)$-isotypic, then $p$ must also be $L(\lambda)$-isotypic.  Again, similar rules for $E$ apply.

\medskip

\item \textbf{Multi-step propagation:} We can also propagate using $F^n$ or $E^n$.  For example suppose $p \in P_k$ is a $L(\lambda)$-isotypic vector with $k > - \lambda + 2$,  and that $q \in P_{k-4}$ is its unique two-step downward neighbor. Then $q = F^2(p)$ up to scalar and therefore $q$ is also $L(\lambda)$-isotypic.  This can also be combined with Hall-tight or reverse propagation in suitable circumstances.

\medskip

\item \textbf{$EF$-eigenvectors:} A weight vector $\vec{v}$ is an eigenvector for $EF$ if and only if $v$ is an isotypic vector -- the eigenvalue will depend on the isotypic component as well as the rank.  Suppose  $p \in P_k$ has a unique neighbor $q \in P_{k-2}$ and $p$ is also the unique neighbor of $q$.  Then both $p$ and $q$ are $L(\lambda)$-isotypic vectors for some $\lambda$ (but we don't know which $\lambda$ a priori). 

\end{itemize}
 
\medskip

\subsubsection{Partial isotypic information:} Typically it is not the case that all of the poset elements themselves are forced to live in isotypic components.  More often we are only able to get partial information about the relationship between the poset elements and the isotypic components.  

\medskip

\begin{itemize}

\item \textbf{Isotypic vectors in spans:}  If $p \in P_k$ is $L(\lambda)$-isotypic with $k > -\lambda$,  then $F(p)$ is also $L(\lambda)$-isotypic.  Even if $p$ does not have a unique downwards neighbor, we can still conclude that there is a $L(\lambda)$-isotypic vector in the span of $p$'s downward neighbors. 

\medskip

\item \textbf{Sums of isotypic components:} If $p \in P_k$ has all of its downwards neighbors known to be $L(\lambda)$-isotypic, but $F$ is not injective from rank $k$ to $k-2$, we can't use the reverse propagation rule but we can still conclude that $p$ is in the direct sum of the $L(\lambda)$ and $L(k)$ components. 

\medskip

\item \textbf{Propagating partial information:} We can use the same methods for individual poset elements in isotypic components to propagate partial information about isotypic vectors in spans of poset elements and poset elements in sums of isotypic components.  For example if $p \in P_k$ is in the sum of two isotypic components,  and $q$ is the unique downward neighbor of $p$, then $q$ must also be in the sum of those components. 

\medskip

\item \textbf{Intersections and Unions:}  This partial information can sometimes be combined to get stronger results.  For example suppose $S$ and $T$ are subsets of $P_k$ where we know both $\text{span}(S)$ and $\text{span}(T)$ contain a $L(\lambda)$-isotypic vector. Then either $\text{span}(S \cup T)$ contains two linearly independent $L(\lambda)$-isotypic vectors or $\text{span}(S \cap T)$ contains a $L(\lambda)$-isotypic vector.   If say we additionally knew $L(\lambda)$ had multiplicity $1$, we could conclude we are in the latter case.

\end{itemize}

\subsubsection{Forbidden Configurations:} Once we know where the isotypic vectors live in the representation, certain combinatorial obstructions to strong $\mathfrak{sl}_2$-actions emerge.  Essentially these are contradictions one arrives at when trying to prove a given poset does not admit such an action.

\medskip

\begin{itemize}

\item \textbf{Multiple isotypic components:} The simplest obstruction is that a poset element is forced to be in two different isotypic components simultaneously.  This was what happened in Example \ref{smallestexample}.

\medskip

\item \textbf{Overcrowded spans:} More generally if $S \subseteq P_k$ is forced to contain more than $|S|$ linearly independent isotypic vectors in its span. In Example \ref{example1441} $\text{span}(s_1,s_2)$ was forced to contain two linearly independent $L(1)$-isotypic vectors, as well as one $L(3)$-isotypic vector. 

\medskip

\item \textbf{Too many isotypic vectors:}  If $L(\lambda)$ appears with multiplicity $m$,  then we can only have at most $m$ linearly independent $L(\lambda)$-isotypic vectors in any given rank.  So if say the multiplicity of $L(\lambda)$ is one and we know $p_1 \in P_k$ is $L(\lambda)$-isotypic, and we also know that there is a $L(\lambda)$-isotypic vector in $\text{span}(p_2, p_3)$ (also in rank $k$)  that is a contradiction. 

\medskip

\item \textbf{Rank conditions and Lindstr\"om–Gessel–Viennot:}  Suppose $S \subseteq P_k$ and $T \subseteq P_{k-2n}$ are such that the submatrix $F^n_{S,T}$ is known to have rank $m$.  Then the Lindstr\"om–Gessel–Viennot Lemma tells us there must be at least $m$ disjoint downwards paths connecting $S$ to $T$.   For example, suppose $\text{span}(S)$ and $\text{span}(T)$ are both known to contain all of the $L(\lambda)$-isotypic vectors in their respective ranks.  If there aren't enough disjoint paths connecting them,  we obtain a contradiction. We'll note that often this can be converted into one of the other contradictions listed above.

\end{itemize}

\subsection{Shrinking the support:} Given a poset $P$ with a strong $\mathfrak{sl}_2$-action, for $p \in P$ a priori we know that $F(p)$ is a linear combination of the downwards neighbors of $p$ (and similarly for $E(p)$ and upwards neighbors). However it could be that in fact $F(p)$ is a linear combination of some smaller subset of its downwards neighbors -- possibly just a single one or maybe even $F(p) = 0$. 

The next observation about the combinatorics of posets with a strong $\mathfrak{sl}_2$-action is that sometimes you can tell, just from the combinatorics of the poset, that any strong action on the poset cannot actually use certain edges in one direction or the other (or both).  Similarly, we can sometimes tell from the combinatorics that certain edge coefficients must be nonzero in any strong $\mathfrak{sl}_2$-action.  Here is an argument that uses this sort of reasoning:

\begin{example} \label{example24442} The poset  
\begin{center}
\includegraphics[scale=0.3]{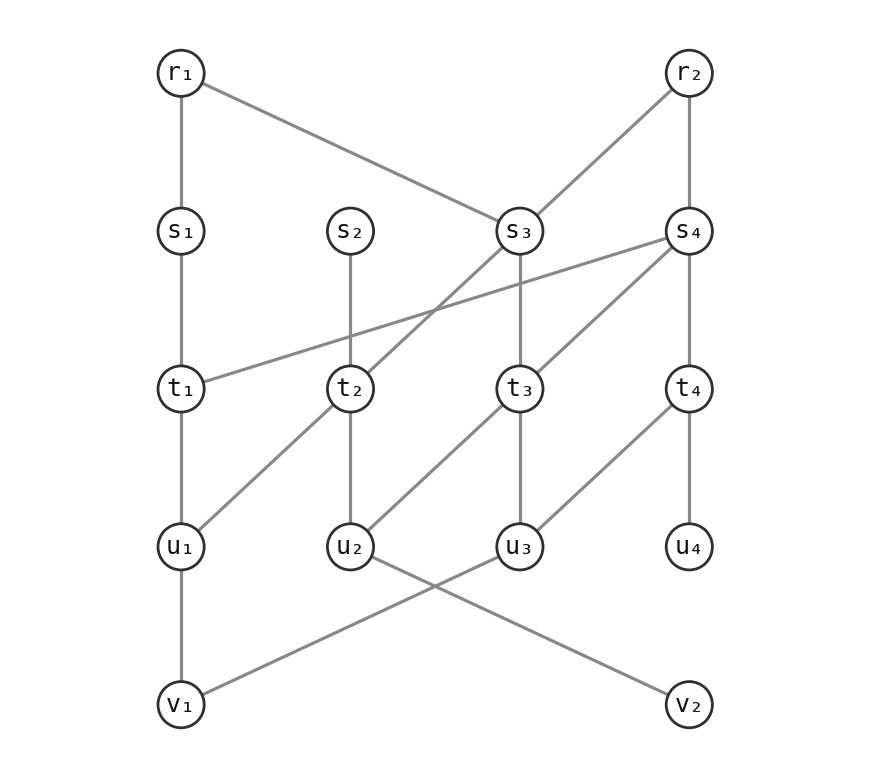}
\end{center}
 does not admit a strong $\mathfrak{sl}_2$-action.
\end{example}

\begin{proof} 

Assume this admits a strong $\mathfrak{sl}_2$-action. First note that $r_1, r_2, u_2, v_1,$ and $v_2$ are all $L(4)$-isotypic vectors, and that $s_2, s_4, t_2, t_4,$ and $u_4$ are all $L(2)$-isotypic vectors.  All of these follow from the highest/lowest-weight and the unique neighbor propagation deduction rules from the previous section.  

Now observe that $E(u_2)$ is in $\text{span}(t_2, t_3)$, but it cannot be a multiple of $t_2$ as they are in different isotypic components.  Therefore $E(u_2)$ must have a nonzero $t_3$ term.  This forces $F(t_2)$ not to have a $u_2$ term. Otherwise, $EF(t_2)$ would have a nonzero $t_3$ term but $EF(t_2) = 2t_2$ since $t_2$ is a $L(2)$-isotypic vector in rank $0$. 

\begin{center}
\includegraphics[scale=0.3]{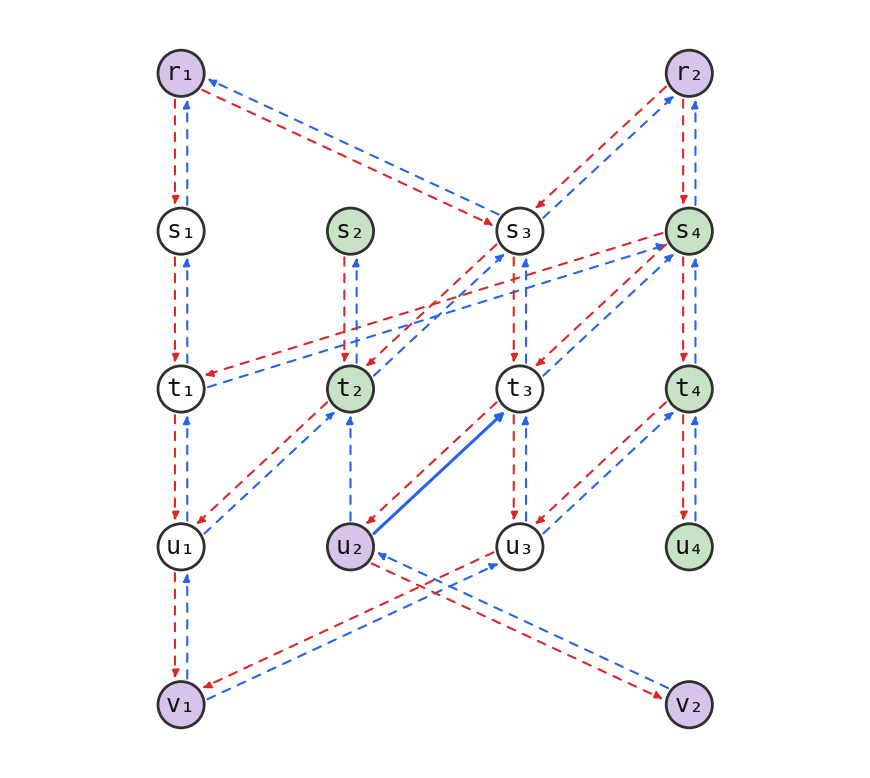}
\end{center}
The above picture is a depiction of what's known so far: The purple vertices are $L(4)$-isotypic vectors, and the green ones are $L(2)$-isotypic.  The solid arrow from $u_2$ to $t_3$ indicates the coefficient that we know must be nonzero, and the missing red arrow between $t_2$ and $u_2$ indicates the coefficient that we now know must be zero.  The remaining edges are left dotted as a priori they could be either zero or nonzero. 

From here we arrive at a contradiction since we see that $F(t_1)$ and $F(t_2)$ must both be multiples of $u_1$, but that can't happen since $F$ is injective between ranks $0$ and $-2$ in the representation $L(4)^2 \oplus L(2)^2$. 
\end{proof}

\subsubsection{Support deductions:} Suppose $P$ is a poset admitting a strong $\mathfrak{sl}_2$-action.  Here are some situations where we can make these sorts of deductions:

\begin{itemize}

\item \textbf{Highest and lowest-weight vectors:} Suppose because of one of our isotypic vector propagation rules we know that $p \in P_k$ is a $k$-isotypic vector.  Well then $E(p) = 0$ so we know that the $E$-coefficients along any upwards edge out of $p$ 
must all be $0$. Similarly for downward edges out of $p \in P_{-k}$. 

\medskip

\item \textbf{Unique neighbors:} Suppose $p \in P_k$ has a unique neighbor in $P_{k-2}$ and that $F$ is known to be injective between those ranks.  Then the $F$-coefficient along that edge must be nonzero.  This can also apply if $p$ has multiple neighbors in $P_{k-2}$ but we have already determined that the other $F$-coefficients along those edges must be zero. 

\medskip

\item \textbf{Unique non-isotypic neighbors:} Suppose $p \in P_k$ is known to be $L(\lambda)$-isotypic, and that all but one of its neighbors in $P_{k-2}$ are known to be in other isotypic components.  Then the $F$-coefficient along the edge to the last vertex must be nonzero. 

\medskip

\item \textbf{Unique isotypic neighbors:} Suppose $p \in P_k$ is known to be $L(\lambda)$-isotypic, it has a neighbor $q \in P_{k-2}$ which is also $L(\lambda)$-isotypic.  Suppose additionally that the span of all $p$'s neighbors in $P_{k-2}$ is known to have exactly one $L(\lambda)$-isotypic vector up to scalars.  Then $F(p)$ must be a scalar multiple of $q$.  Consequently, the $F$-coefficient from $p$ to $q$ is nonzero and all other $F$-coefficients out of $p$ are zero. 

\medskip

\item \textbf{Unique matchings in Hall-tight neighborhoods:} More generally suppose $S \subset P_k$ has neighborhood $T \subset P_{k-2}$ with $|S| = |T|$ and $F$ is injective between those ranks.   At least one perfect matching between $S$ and $T$ must have all nonzero $F$-coefficients as the determinant of the submatrix $F_{S,T}$ is nonzero.  If there is only one such matching, or if certain edges are contained in all such matchings, we can conclude those edges have nonzero $F$-coefficients. 

\medskip

\item \textbf{Down-up paths from isotypic vectors:}  Although $E$ and $F$ are not inverses to one another, for an isotypic weight vector $\vec{v}$ we have $EF(\vec{v}) = c\vec{v}$ for some explicit constant $c$.  This constant depends on the weight and isotypic component, and is nonzero provided $\vec{v}$ is not a lowest-weight vector.  In particular if $p \in P_k$ is an isotypic but not lowest-weight vector, and there is a unique two-step down-up path to some other $q \in P_k$ then one of the coefficients along that path must be zero. If one is already known to be nonzero, then the other must be zero.  We used this in Example \ref{example24442}.

\medskip

\item \textbf{Two-step paths from general vectors:}  For a non-isotypic vector in rank $k$ we don't have $EF(\vec{v}) = c\vec{v}$ but we do have $EF(\vec{v}) - FE(\vec{v}) = H (\vec{v}) = k \vec{v}$.  In particular if $p \in P_k$ is any element and there is a unique two-step path (both down-up and up-down) to some other $q \in P_k$ then one of the coefficients along that path must be zero.  Again, if one of those is already known to be nonzero the other must be zero. 

\medskip

\item \textbf{Longer paths and higher order relations:} Suppose $\vec{v}$ is an isotypic weight vector and $F^2(\vec{v}) \ne 0$, then $EF^2(\vec{v}) = c F(\vec{v})$ for an explicit constant $c$ depending on the weight and isotypic component.  If $p \in P_k$ is known to be $L(\lambda)$-isotypic with $k > -\lambda +2$ then for any $q \in P_{k-2}$ which is a neighbor of $p$ but not reachable by a down-down-up path from $p$, the $F$-coefficient from $p$ to $q$ must be zero.   Similar arguments can be made for different kinds of paths and $E^mF^n$ or $F^n E^m$. 

\end{itemize}

\subsubsection{Forbidden Configurations:} Once we have more information about which edge coefficients are forced to be zero or nonzero, there are new ways to see a strong $\mathfrak{sl}_2$-action cannot exist.

\medskip

\begin{itemize}

\item \textbf{Simultaneously zero and nonzero:} We may find that for one reason a coefficient is forced to be zero, but for another reason it is forced to be nonzero.  Obviously this is a contradiction. 

\medskip

\item \textbf{Breaking injectivity or surjectivity:}  For $\mathfrak{sl}_2$ representations we know exactly when the $E$ and $F$ operators are injective or surjective between adjacent ranks.  Since $P$ is Peck, the necessary Hall-matching conditions on the edges will all initially be satisfied automatically. However setting additional coefficients to zero might break these conditions, and making it so $E$ or $F$ cannot be injective/surjective in certain ranks. 

\end{itemize}

\subsection{Subrepresentations and quotients} Let $P$ be a poset with a strong $\mathfrak{sl}_2$-action.  Suppose $S \subseteq P$ is such that all of the $E$- and $F$-edge coefficients from elements of $S$ to elements of $S^c$ are zero (the coefficients from $S^c$ to $S$ may still be nonzero).   It follows that that $\text{span}(S)$ forms a submodule of $\mathbb{C}P$.  Thus $S$ itself inherits a strong $\mathfrak{sl}_2$-action.   Moreover though, the image of $S^c$ forms a basis for the quotient representation $\mathbb{C}P / \text{span}(S)$ with the same edge coefficients between them as in $P$. 

It turns out that sometimes you can tell from the combinatorics of $P$ that this must happen for any strong $\mathfrak{sl}_2$-action on $P$.  Here is an example of how we might make such an argument: 

\begin{example} \label{example3553} The poset  
\begin{center}
\includegraphics[scale=0.3]{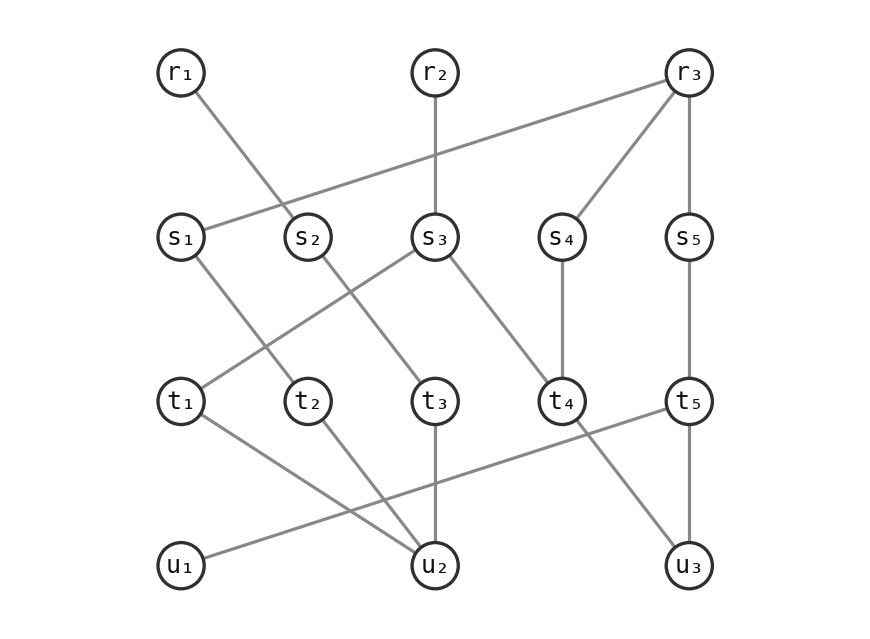}
\end{center}
 does not admit a strong $\mathfrak{sl}_2$-action.
\end{example}

\begin{proof} Suppose we have a strong $\mathfrak{sl}_2$-action on this poset.  The element $u_1$ is a lowest-weight $L(3)$-isotypic vector.  Unique neighbor propagation tells us $t_5$, $s_5$, and $r_3$ are all also $L(3)$-isotypic vectors.  We can also then conclude using the up-down path arguments that the $F$-edge coefficients from $r_3$ to $s_1$, from $r_3$ to $s_4$, and from $t_5$ to $u_3$ are all zero.   In particular we see that $r_3, s_5, t_5, u_1$ must span a subrepresentation. 

\begin{center}
\includegraphics[scale=0.25]{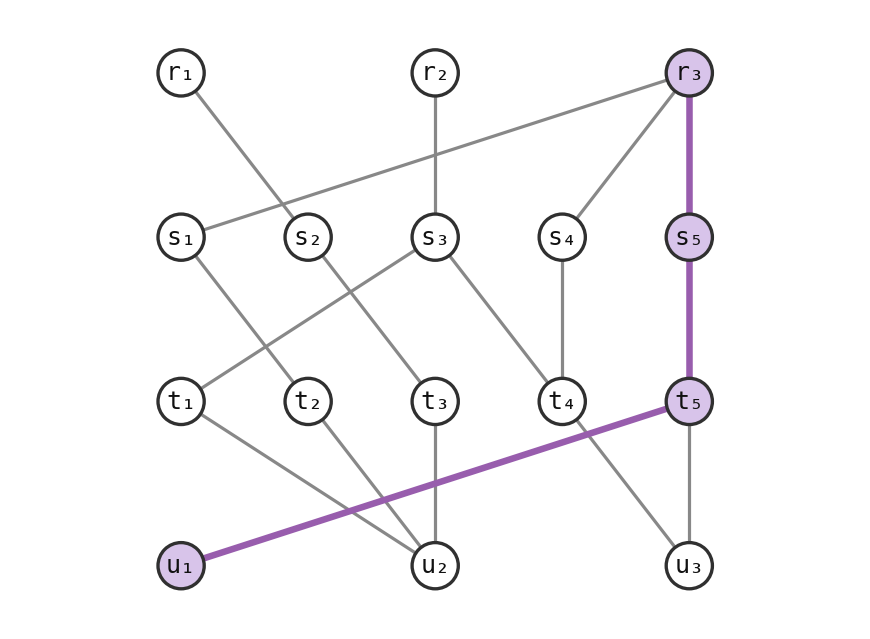}  \includegraphics[scale=0.25]{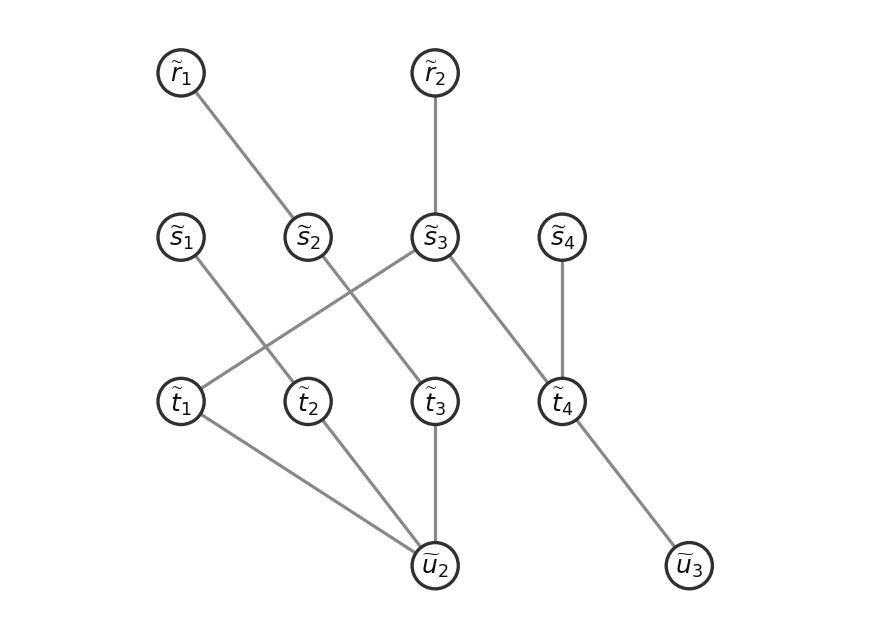}
\end{center}

Therefore if we remove these vertices the images of the remaining vectors in the quotient representation naturally define a strong $\mathfrak{sl}_2$-action on the complement.  However in this new smaller poset we can easily spot a familiar contradiction:  $\tilde{t}_4$ is forced to be both $L(1)$-isotypic and $L(3)$-isotypic. 
\end{proof}

Note that we can also go the other direction.  Given strong $\mathfrak{sl}_2$-actions on $S$ and $S^c$ we can define one on $P$ by setting all coefficients between $S$ and $S^c$, in both directions, to zero.   This gives us a recursive, divide and conquer approach to determining when strong $\mathfrak{sl}_2$-actions exist: Assuming you can show that all $E$- and $F$-coefficients from $S$ to $S^c$ must be zero then  a strong $\mathfrak{sl}_2$-action on $P$ exists if and only if such actions exist on $S$ and $S^c$. 

\subsection{Branching arguments}  When proving posets do not admit symmetric chain decompositions, inevitably one needs to start branching arguments  saying things like  ``The chain through vertex $A$ must connect to either $B$ or $C$, if it connects to $B$..."   or ``Either $D$ is the bottom of its chain, or it connects to $E$...".

Even though strong $\mathfrak{sl}_2$-actions are more flexible than symmetric chain decompositions in that $E$ and $F$ can be supported along multiple edges, there are still times where these sorts of binary choices occur and we need to use similar branching arguments.  Here is an example of such an argument: 

\begin{example} \label{example2442} The poset  
\begin{center}
\includegraphics[scale=0.3]{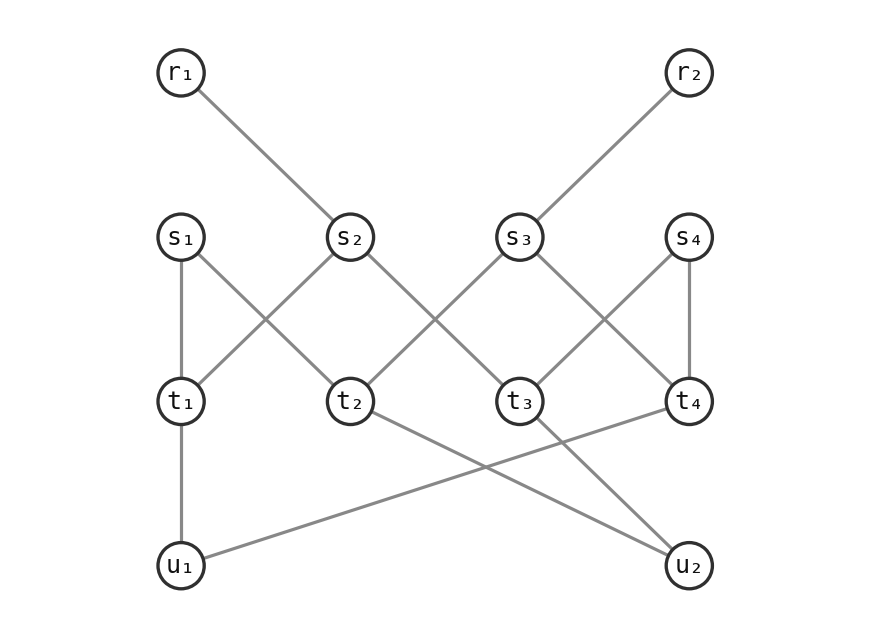}
\end{center}
 does not admit a strong $\mathfrak{sl}_2$-action.
\end{example}

\begin{proof} 
Assume we have a strong $\mathfrak{sl}_2$-action.  We can see immediately that $r_1,r_2, s_2,$  $s_3, u_1,$ and $u_2$ are $L(3)$-isotypic,  and that $s_1$ and $s_4$ are $L(1)$-isotypic.  We then know by injectivity that at least one of the $F$-coefficients along the edges down from $s_1$ must be nonzero so let's branch based on that.


\medskip

\noindent \textbf{Case 1:}  $F(s_1)$ has a nonzero $t_1$ term. 

\begin{center}
\includegraphics[scale=0.25]{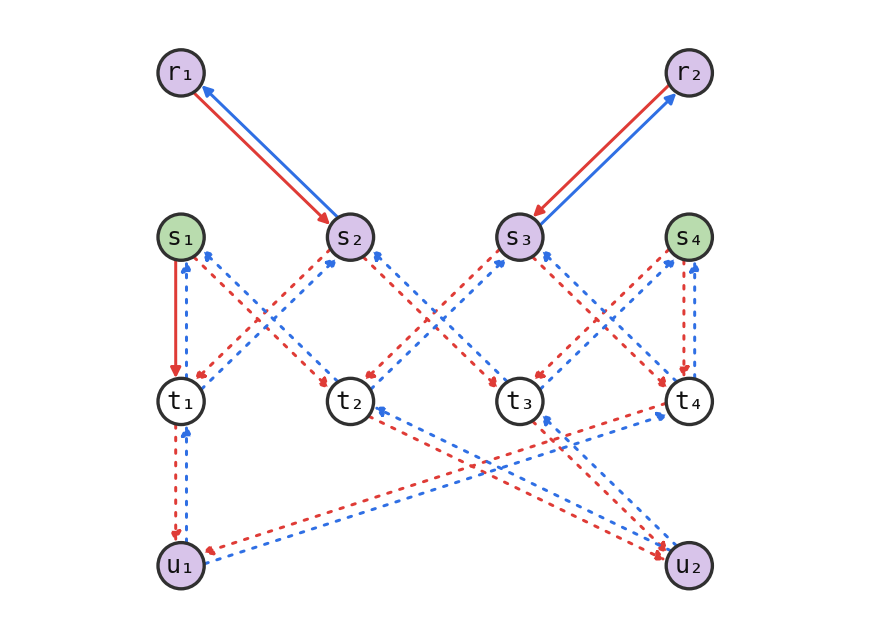}
\includegraphics[scale=0.25]{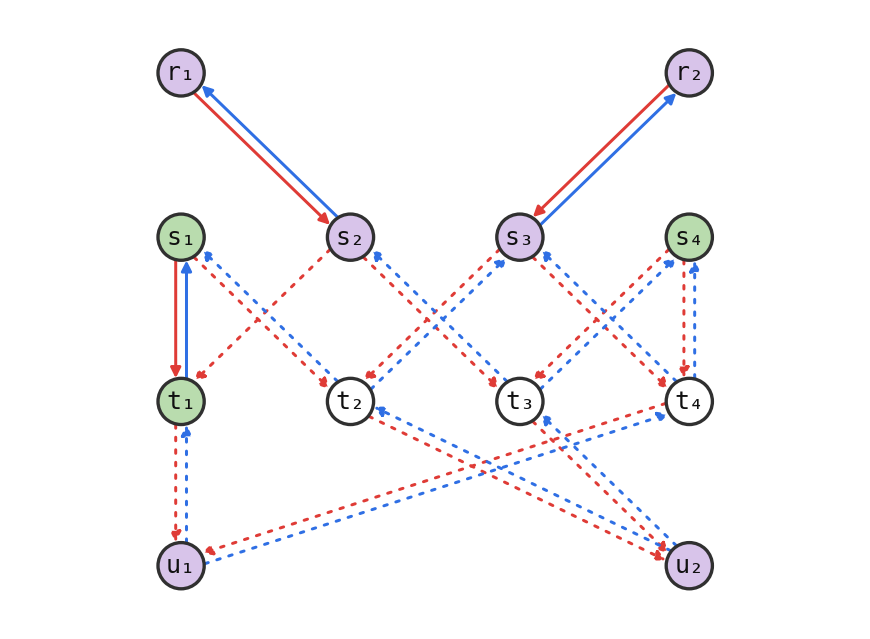}
\includegraphics[scale=0.25]{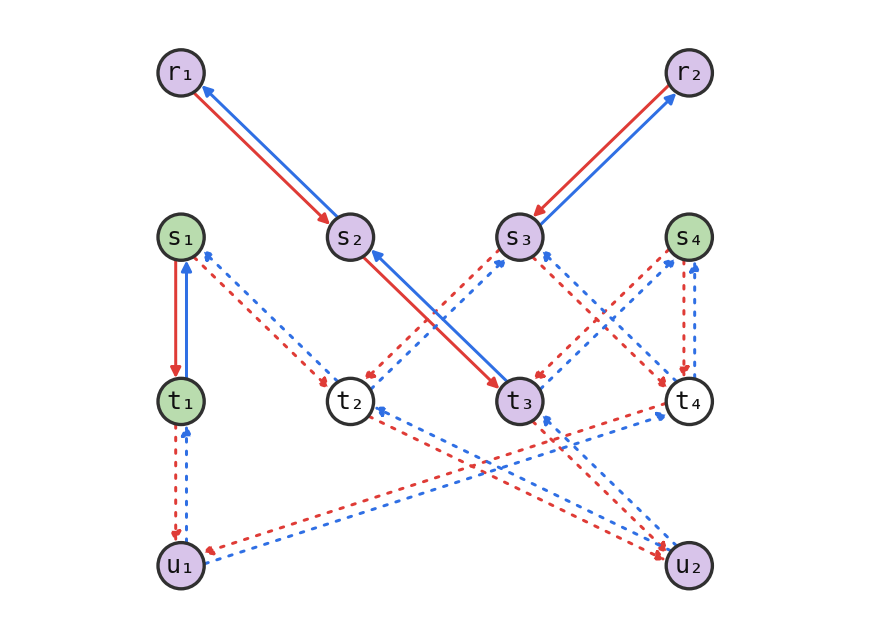}
\includegraphics[scale=0.25]{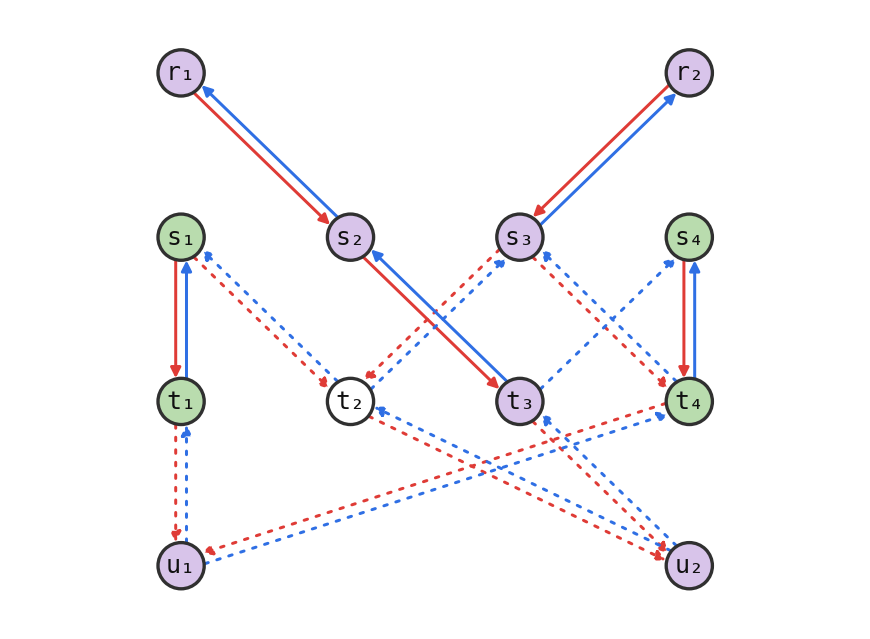}
\end{center}
\vspace{-.5cm} This causes a chain reaction in the middle layer:
\begin{itemize}
\item  First we see that since $EF(s_1) = s_1$ we must have $E_{t_1,s_1} \ne 0$ and that $E_{t_1,s_2} = 0$, and therefore we see that $t_1$ is $1$ isotypic as $E(t_1) = cs_1$. 
\vspace{.2cm}

\item  Next since $EF(s_2) = 4s_2$ we must have $F_{s_2, t_1} = 0$ as well as $F_{s_2, t_3} \ne 0$ and $E_{t_3, s_2} \ne 0$.  We see $F(s_2) = ct_3$ so $t_3$ is $L(3)$-isotypic.  
\vspace{.2cm}

\item Applying the same reasoning one more time, $EF(s_4) = s_4$ we must have $F_{s_4, t_3} = 0$ as well as $F_{s_4, t_4} \ne 0$ and $E_{t_4, s_4} \ne 0$.  We see $F(s_4) = ct_4$ so $t_4$ is $L(1)$-isotypic.

\end{itemize}
We arrive at a contradiction, since $\text{span}(t_1, t_4)$ consists entirely of $L(1)$-isotypic vectors but $E(u_1) \in \text{span}(t_1, t_4)$ is $L(3)$-isotypic. \\

\noindent \textbf{Case 2:}  $F(s_1)$ has a nonzero $t_2$ term.  \\

A completely analogous chain reaction happens, this time supported on the complementary set of edges on the middle row, and an analogous contradiction is arrived at, this time involving $t_2, t_3,$ and $u_2$. Since both cases lead to a contradiction we therefore conclude there cannot be a strong $\mathfrak{sl}_2$-action. \end{proof}

\subsubsection{Things to branch on:} Here are a few situations where branching arguments have proven useful:

\begin{itemize}

\item \textbf{Two neighbors:} In Example \ref{example2442} the element $s_1$ had two downwards neighbors,  and we knew at least one had to appear with nonzero coefficient in $F(s_1)$.  This seems to be particularly useful when the source vertex is isotypic, and especially if there are other nearby elements with two neighbors to cause a chain reaction like in that example. 

\medskip

\item \textbf{Two isotypic components:} Suppose we know an element $p \in P$ is in the direct sum of the $k$-isotypic component and the $j$-isotypic component.  It is sometimes useful to condition on whether or not $p$ actually has a $j$-isotypic component or not.  This is particularly useful if $p$ is in rank $k$ (or similar for $-k$), so then either $p$ is a highest-weight vector or $E(p)$ is nonzero and $j$-isotypic, either of which can be useful information. 

\medskip

\item \textbf{Isotypic vectors in a span:} Suppose $S \subseteq P_k$ is known to contain both $F(p)$ and $E(q)$ where $p$ and $q$ are $L(\lambda)$-isotypic vectors in ranks $k+2$ and $k-2$ respectively.   Well, either $F(p) = E(q)$ up to a scalar, or they are linearly independent.   Either of these options provides new and potentially useful information. This is particularly useful if $S$ is small or if it is known to contain other isotypic vectors.

\end{itemize}

\subsection{Without loss of generality arguments} Sometimes one can show ``If any strong $\mathfrak{sl}_2$-action exists on $P$,  then there must exist an action such that [insert additional property] holds".  So without loss of generality we may assume the additional condition holds. This can in some circumstances simplify things greatly.  Here is an example of such an argument:

\begin{example} \label{example235532} The poset  
\begin{center}
\includegraphics[scale=0.3]{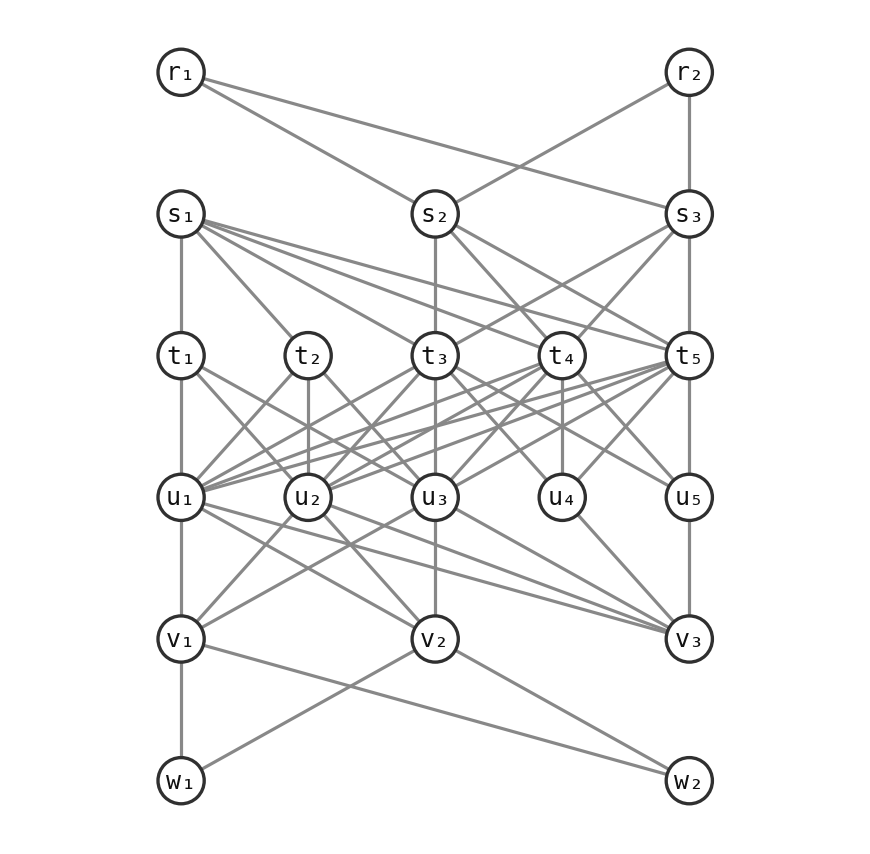}
\end{center}
 does not admit a strong $\mathfrak{sl}_2$-action.
\end{example}

\begin{proof}
First observe that  $r_1$, $r_2$, and $s_1$ are all highest-weight vectors.  Moreover, $\text{span}(F(r_1),F(r_2)) = \text{span}(s_2, s_3)$ so $s_2$ and $s_3$ are $L(5)$-isotypic.  This poset has $180^\circ$ rotational symmetry, so analogous claims hold on the bottom ranks as well. 
 
There is actually much more symmetry here than just the rotational symmetry.  There are a number of pairs of vertices that have the exact same neighbors.  Here is a new picture with the isotypic data and symmetry highlighted: 

\begin{center}
\includegraphics[scale=0.3]{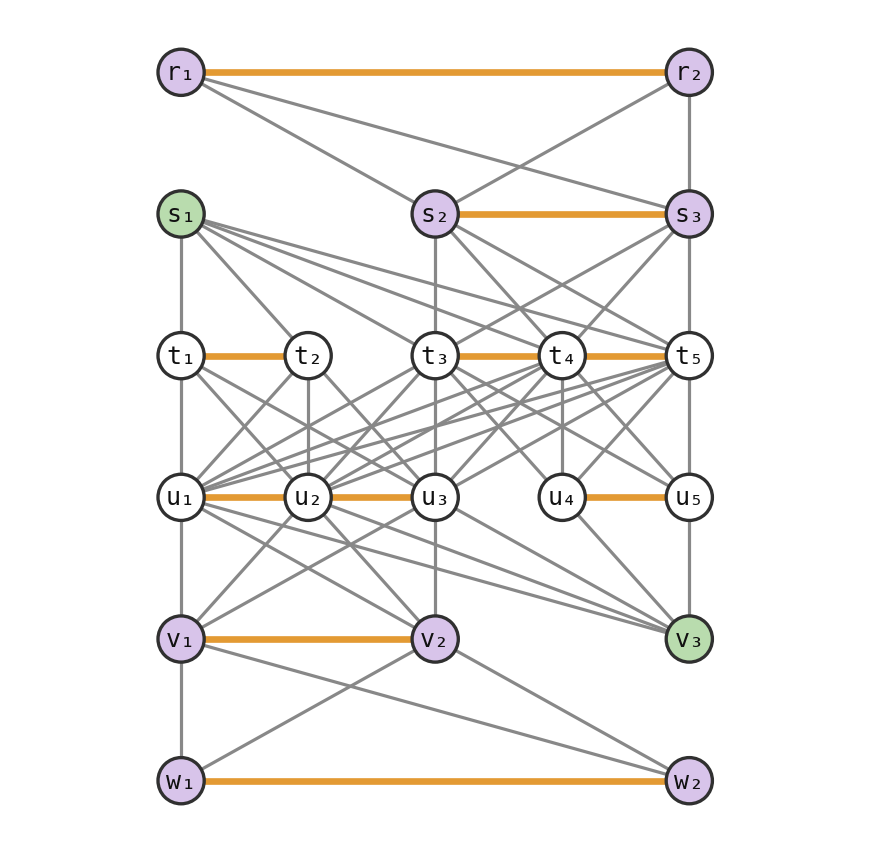}
\end{center}

Here is the key observation: Suppose we have a basis for $L(5)^2 \oplus L(3) \oplus L(1)^2$ with support graph contained in this poset. For each of these groups of ``twin" vertices (connected in orange) we may replace those basis vectors with any other basis for the same span and the support graph will still be contained in the poset. 

So for example since $\text{span}(t_3, t_4, t_5)$ contains $F(s_2)$ and $F(s_3)$,  we may assume without loss of generality that $t_3$ and $t_4$ are $L(5)$-isotypic.   Using the $180^\circ$ rotational symmetry again we can make a similar argument to assume $u_2$ and $u_3$ are $L(5)$-isotypic.   Once we make those assumptions we have found the entire $L(5)$-isotypic component and can quotient it out: 

\begin{center}
\includegraphics[scale=0.25]{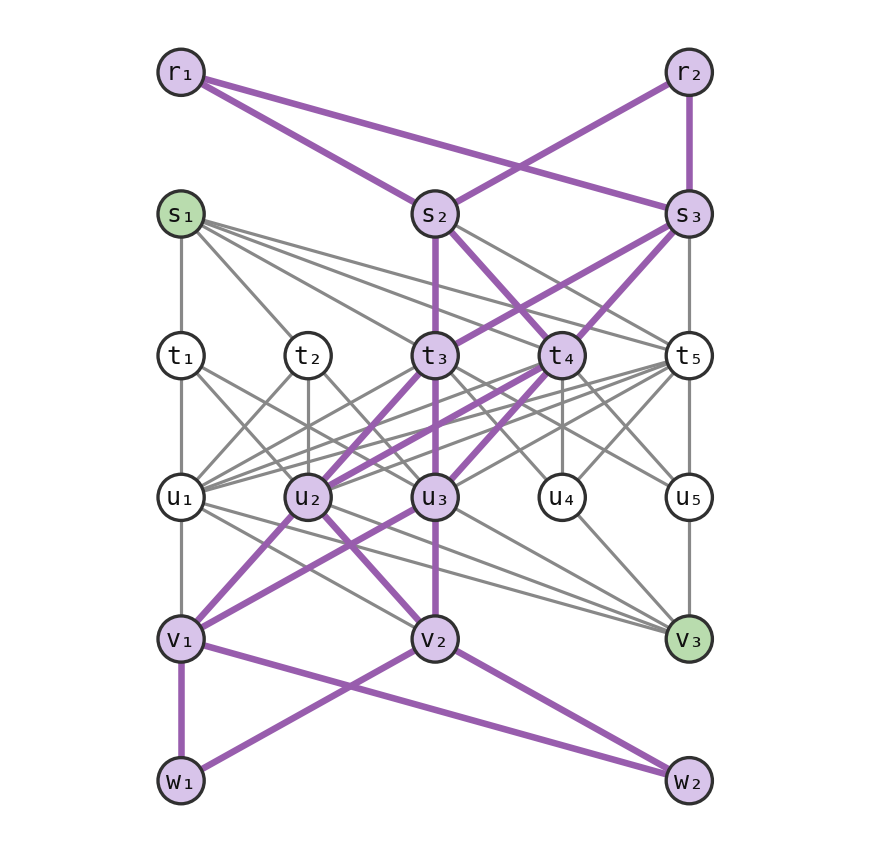}
\includegraphics[scale=0.25]{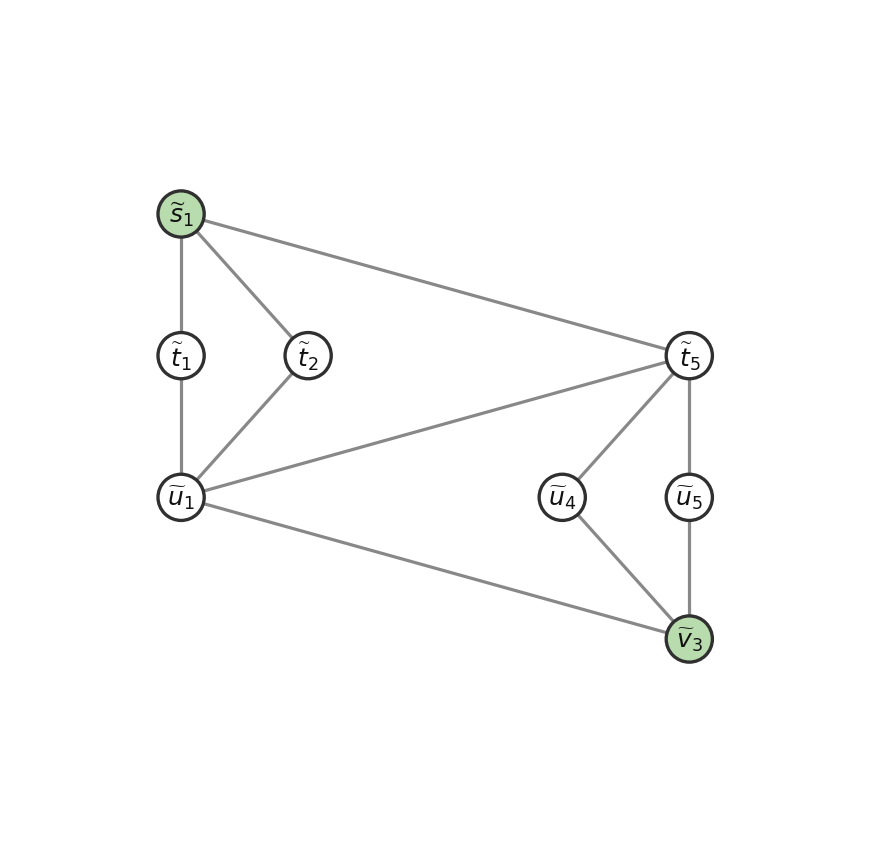}
\end{center}

In this quotient we can see that $F(\tilde{t}_1) = cF(\tilde{t}_2)$, which is a contradiction as $F$ needs to be injective between rank $1$ and $-1$.  \end{proof}

\noindent \textbf{Remark:} The $180^\circ$ rotational symmetry in the above example is fairly rare. It was just convenient as an example to simplify the exposition.   However we'll note that having some combinatorial symmetry coming from ``twin" vertices empirically seems fairly common among SCD-critical Peck posets. 

\subsection{The weak Bruhat order for $H_3$}  Finally, we'll close this section out with an example of a Peck poset which does not admit an SCD which occurs ``in the wild".  Leclerc proved in \cite{leclerc94} that the weak Bruhat order for the Coxeter group of type $H_3$ is Peck but does not admit an SCD.  We have the following strengthening:

\begin{theorem}
The weak Bruhat order for the Coxeter group of type $H_3$ does not admit a strong $\mathfrak{sl}_2$-action. 
\end{theorem}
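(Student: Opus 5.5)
The plan is to follow the template of the examples in this section: deduce forced isotypic information from the combinatorics of the Hasse diagram, and then hit one of the forbidden configurations. We would do this in parallel with Leclerc's argument that no SCD exists, translating each of his combinatorial steps into a representation-theoretic one.

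\textbf{Setup.} First record the rank data. The Poincar\'e polynomial of $H_3$ is $[2]_q[6]_q[10]_q$, so after renormalizing, the ranks are $15,13,\dots,-15$ with rank vector
\[
[1,3,5,7,9,11,12,12,12,12,11,9,7,5,3,1].
\]
Any strong action must therefore have underlying representation
\[
L(15)\oplus L(13)^2\oplus L(11)^2\oplus L(9)^2\oplus L(7)^2\oplus L(5)^2\oplus L(3),
\]
with no $L(1)$ summand. Two features of this decomposition will drive the argument:
\begin{itemize}
\item $F$ is an isomorphism between ranks $1$ and $-1$, and $E$ is injective on all ranks $k<0$ (dually for $F$ on ranks $k>0$).
\item $L(3)$ has multiplicity one, so each of ranks $3,1,-1,-3$ contains a unique line of $L(3)$-isotypic vectors.
\end{itemize}

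We would also fix the symmetry $w\mapsto w w_0$, which reverses the weak order, and use the automorphism $w\mapsto w_0ww_0$. These let us halve all deductions, in the spirit of the rotational symmetry argument of Example \ref{example235532}.

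\textbf{Propagating isotypic information.} We work outward from the two ends, using that rank-two intervals in the weak order are squares, hexagons or decagons according to $m_{st}=2,3,5$.
\begin{itemize}
\item The identity is a lowest-weight $L(15)$ vector. Its three upper covers $s,t,u$ span a space containing $E(e)$, which is $L(15)$-isotypic, and the $L(13)$ lowest-weight space is $2$-dimensional.
\item Near the bottom, elements such as $s$, $st$, $sts$ have very few lower covers, so we apply unique-neighbor, Hall-tight and multi-step propagation (via $E^n$) along the long reduced words of the parabolic $H_2$ and $A_2$ subgroups. This should pin down which poset elements, or which small spans, carry the $L(15)$, $L(13)$ and $L(11)$ lowest-weight lines.
\item At each step we also record support deductions. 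In particular, down-up paths from isotypic vectors that are unique in the $\{s,t,u\}$ decagon/hexagon geometry force certain $E$- or $F$-coefficients to vanish.
\item Where two lower covers are possible, we branch as in Example \ref{example2442}.
\end{itemize}

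Whenever an orbit of isotypic vectors is completely located, we quotient it out as in Example \ref{example3553}. This shrinks the problem toward the middle ranks.

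\textbf{The contradiction, and the main obstacle.} The hard step is reaching the middle four ranks with enough information left. The most promising contradiction is an overcrowding of the unique $L(3)$ line, or a failure of injectivity of $F$ between ranks $1$ and $-1$ after quotienting, just as in Example \ref{example235532}. The route is to identify, from Leclerc's argument, the small set of middle-rank elements whose neighborhoods are Hall-tight. After quotienting the forced $L(15),\dots,L(5)$ isotypic parts, the corresponding submatrix of $F$ (or of $F^n$, via Lindstr\"om--Gessel--Viennot) must still be injective, but too few disjoint paths survive the vanishing coefficients.

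The obstacle is size: there are $120$ elements, so the branching could blow up. If a clean hand argument does not close, the fallback is a computer-assisted version of the same deduction rules. We would apply the rules exhaustively and branch only on binary choices of nonzero coefficients, and every leaf of the resulting finite case tree would have to end in one of the forbidden configurations listed earlier in this section.
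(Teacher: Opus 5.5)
Your setup is correct: the rank vector, the decomposition $L(15)\oplus L(13)^2\oplus\cdots\oplus L(5)^2\oplus L(3)$, and especially the observation that $L(3)$ has multiplicity one. However, the proposal stops short of a proof. The step that actually produces a contradiction is exactly the one you call ``the main obstacle'' and leave unresolved. A computer-assisted case tree that has not been run is not an argument.

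The route you propose for reaching that step, propagating upward from the identity and quotienting out the $L(15),\dots,L(5)$ components, is also unsupported. Every intermediate component has multiplicity $2$. So near the ends the propagation rules typically only locate lines inside spans (e.g.\ the $L(15)$ line inside the span of the three atoms), rather than pinning down individual elements. You give no reason to think they reach the middle ranks with usable information. (Minor point: $w_0=-1$ is central in $H_3$, so $w\mapsto w_0ww_0$ is the identity.)

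The missing idea is that the middle ranks can be attacked directly, with no prior isotypic information at all:
\begin{enumerate}
\item Leclerc's diagram contains a set $S$ of $5$ elements in rank $3$ with only $4$ upper covers in rank $5$. So $E$ maps the $5$-dimensional space $\text{span}(S)$ into a $4$-dimensional one. Hence $\text{span}(S)$ contains a nonzero $v$ with $Ev=0$, i.e.\ a weight-$3$ highest-weight vector.
\item Dually, a set $T$ of $5$ elements in rank $-3$ with only $4$ lower covers yields a lowest-weight vector $w\in\text{span}(T)$.
\item Since $L(3)$ has multiplicity one, $E^3w=\lambda v$ with $\lambda\neq 0$. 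So the $S\times T$ block of $E^3$ is nonzero.
\item Expanding the matrix product (Lindstr\"om--Gessel--Viennot with a single path) then forces a saturated upward path from $T$ to $S$. Inspection of the diagram shows there is none.
\end{enumerate}
This is the literal translation of Leclerc's SCD argument, where $S$ must contain the top and $T$ the bottom of the unique length-$4$ chain. It needs no propagation from the ends, no branching, and no quotienting.
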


\begin{proof}

Suppose a strong $\mathfrak{sl}_2$-action exists.  Mimicking Leclerc's argument (and reproducing his diagram), here are the middle four ranks poset plus a few relevant elements from the ranks above and below them: 

\begin{center}
\includegraphics[scale=0.3]{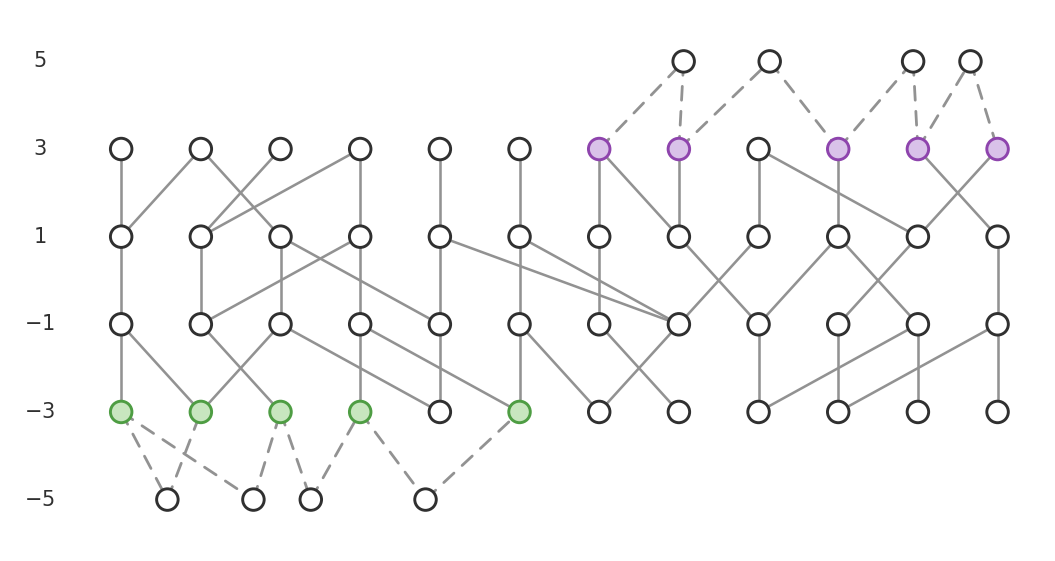}
\end{center}

Let $S$ denote the set of $5$ vertices in rank $3$ highlighted in purple.  Collectively the elements of $S$ only have $4$ neighbors in rank $5$.  Therefore, $\text{span}(S)$ must contain a highest-weight vector $\vec{v}$ of weight $3$. Similarly let $T$ denote the set of $5$ vertices in rank $-3$ highlighted in green.  Collectively the elements of $T$ only have $4$ neighbors in rank $-5$.  Therefore, $\text{span}(T)$ must contain a lowest-weight vector $\vec{w}$. 

In this poset $|P_3| - |P_5| = 1$. Thus, $\vec{v}$ and $\vec{w}$ are the unique $L(3)$-isotypic vectors in their respective ranks, up to scaling.  Consequently, $E^3(\vec{w}) = \lambda \vec{v}$ for some nonzero scalar $\lambda$.  Therefore $E^3_{S,T}$ has rank $\ge 1$ so by the  Lindstr\"om–Gessel–Viennot Lemma  (the combinatorial version of the Cauchy-Binet Theorem, see Section \ref{graphdictionary}) there must be at least one upward path connecting $T$ to $S$. However by inspection we see no such path exists, a contradiction.
\end{proof}


\end{section}

\begin{section}{Special cases and partial results} \label{specialcases}
In this section we'll prove some special cases and partial results toward Conjecture \ref{theconjecture}.

\subsection{A single isotypic component}

We'll prove Conjecture \ref{theconjecture} in the case where all of the ranks of $P$ have the same size.  Such posets correspond to $\mathfrak{sl}_2$-representations $L(\lambda)^k$ with a single isotypic component. In fact we will only need an $\mathfrak{sl}_2$-action in the sense of Proctor for this.

\begin{theorem}\label{onecomponent}
Let $P$ be a ranked poset where all of the non-empty ranks have the same size. If $P$ admits an $\mathfrak{sl}_2$-action in the sense of Proctor then $P$ admits an SCD. 
\end{theorem}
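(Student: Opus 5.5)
If all nonempty ranks of $P$ have the same size $m$ and $P$ carries an $\mathfrak{sl}_2$-action in the sense of Proctor, then $\mathbb{C}P \cong L(n)^{\oplus m}$, where $n$ is the top rank. The multiplicity of $L(k)$ equals $|P_k| - |P_{k+2}|$, which is zero except for $k = n$. The plan is to show that between any two adjacent ranks the cover relations contain a perfect matching. Concatenating these matchings rank by rank then splits $P$ into $m$ disjoint saturated chains, each running from $P_n$ to $P_{-n}$. These chains are automatically symmetric, so together they form an SCD.

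\textbf{Step 1: $E$ is a bijection between adjacent ranks.} In $L(n)$ the operator $E$ sends the weight $k$ space isomorphically onto the weight $k+2$ space for every $-n \le k < n$. Hence $E$ restricts to a linear isomorphism $E_k : \mathbb{C}P_k \to \mathbb{C}P_{k+2}$ for each such $k$. Only $E$, not $F$, needs to follow cover relations here, which is exactly what Proctor's definition guarantees. So the matrix of $E_k$ in the poset bases is a square $m \times m$ matrix supported on the cover relations between $P_k$ and $P_{k+2}$.

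\textbf{Step 2: extract a perfect matching.} Since $E_k$ is invertible, its determinant is nonzero. Expanding the determinant as a signed sum over permutations (the minors-and-matchings dictionary of Section \ref{graphdictionary}), at least one term must be nonzero. That term is a bijection $\sigma_k : P_k \to P_{k+2}$ with $\sigma_k(x) \gtrdot x$ for all $x$, i.e.\ a perfect matching in the bipartite cover graph.

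\textbf{Step 3: assemble the chains.} For each $x \in P_{-n}$, form the chain $x \lessdot \sigma_{-n}(x) \lessdot \sigma_{-n+2}\sigma_{-n}(x) \lessdot \cdots$, ending in $P_n$. Since every $\sigma_k$ is a bijection, these $m$ chains are disjoint and cover $P$. Each is saturated and runs from rank $-n$ to rank $n$, so it is symmetric. This gives the SCD.

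There is no real obstacle. The only points needing care are the identification of the representation as $L(n)^{\oplus m}$, which follows from the multiplicity count and rank symmetry, and the claim that $E$ is injective (equivalently surjective) on weight spaces strictly below the top weight.
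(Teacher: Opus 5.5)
Your proposal is correct and follows the paper's own argument: since the representation is $L(n)^{\oplus m}$, $E$ is bijective between each pair of adjacent ranks, so a nonzero determinant term gives a perfect matching along cover relations on every layer, and these matchings concatenate into an SCD. The paper's proof is exactly this. It also remarks that, alternatively, the outermost chain family supplied by Griggs' property T is already an SCD in this case.
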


\begin{proof}
In this case, the $E$ operator is injective between every pair of adjacent ranks, so on each layer we can pick a perfect matching.  Those matchings combine to form an SCD.  (Alternatively, we can observe that the outermost family of chains guaranteed by Griggs' property $T$ form an SCD in this case)
\end{proof}

Indeed in this case we can get away only using edges corresponding to nonzero $E$ coefficients.  We'll note that if we have a strong $\mathfrak{sl}_2$-action, we could also use all edges corresponding to nonzero $F$-coefficients instead.  Here are a couple more easy variations of this theorem:

\begin{corollary}\label{almostisotypic}
Let $P$ be an RSU poset where all of the non-empty ranks have the same size except for $P_0$. If $P$ admits an $\mathfrak{sl}_2$-action in the sense of Proctor then $P$ admits an SCD. 
\end{corollary}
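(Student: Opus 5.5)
The plan is to split into two cases according to whether $P_0$ is nonempty, and in each case build the chains rank by rank from injective maps.

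\textbf{Case 1: the ranks are odd.} Then $P_0$ is empty. Every nonempty rank has the same size, so Theorem \ref{onecomponent} applies directly.

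\textbf{Case 2: the ranks are even.} Let $m$ be the common size of the ranks $P_k$ with $k \neq 0$, where $k$ runs from $-n$ to $n$. Since $P$ is an RSU poset, $|P_0| \ge m$. The representation on $\mathbb{C}P$ is then $L(n)^{m} \oplus L(0)^{|P_0|-m}$.
\begin{enumerate}
\item Away from rank $0$, $E$ is a bijection between adjacent nonzero ranks, because there is only the single $L(n)$ isotypic component and no highest-weight vectors in those ranks except at $n$. As in the proof of Theorem \ref{onecomponent}, for each pair of adjacent ranks not involving $P_0$ we pick a perfect matching using edges with nonzero $E$-coefficients. This uses that a nonzero determinant of the square block gives a perfect matching, by Section \ref{graphdictionary}.
\item Between $P_{-2}$ and $P_0$, the map $E$ is injective, since only $L(0)$ has lowest weight at rank $0$. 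The $m \times |P_0|$ submatrix therefore has a nonzero $m\times m$ minor, which gives a matching of $P_{-2}$ into a subset $A \subseteq P_0$ of size $m$.
\item The key step is to match the \emph{same} set $A$ to $P_2$. Consider $E \colon P_0 \to P_2$ restricted to $\mathrm{span}(A)$. Note that $E(\mathrm{span}(A)) = E^2(\mathbb{C}P_{-2})$. This equals $\mathbb{C}P_2$, because $E^2\colon \mathbb{C}P_{-2} \to \mathbb{C}P_2$ is an isomorphism on $L(n)^m$. Hence the $P_2 \times A$ block of $E$ is an invertible square matrix, and it yields a perfect matching between $A$ and $P_2$ along cover edges.
\end{enumerate}

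Concatenating these matchings gives $m$ saturated symmetric chains from rank $-n$ to rank $n$. Each element of $P_0 \setminus A$ is a singleton chain from rank $0$ to rank $0$, which is symmetric. Together these form an SCD.

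The main obstacle is step 3: making the matchings below and above $P_0$ meet in the same set $A$. The argument above resolves it by observing that $\mathrm{span}(A)$ is a complement to $\ker E$ restricted to $P_0$. The only facts needed for this are the injectivity of $E$ from $P_{-2}$ and the invertibility of $E^2$ from $P_{-2}$ to $P_2$, both of which hold in the sense of Proctor. Equivalently, one could note that the middle two-step chains could be extracted via Lindström–Gessel–Viennot applied to $E^2_{P_2,P_{-2}}$, which is an invertible square matrix. Its nonzero determinant gives $m$ vertex-disjoint paths through $P_0$, and these glue to the outer matchings.
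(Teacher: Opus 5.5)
\textbf{Gap in Step 3.} The identity $E(\mathrm{span}(A)) = E^2(\mathbb{C}P_{-2})$ is false in general. It requires $\mathrm{span}(A)\cap\ker E=0$ inside $\mathbb{C}P_0$. Step 2 gives only that the $A\times P_{-2}$ block of $E$ is invertible, i.e.\ that $\mathrm{span}(P_0\setminus A)\cap E(\mathbb{C}P_{-2})=0$. In the paper's language, Step 2 makes $A$ $L(n)$-cosaturated, while Step 3 needs $A$ to be $L(n)$-saturated. These are different conditions, and in general $\mathrm{span}(A)\neq E(\mathbb{C}P_{-2})$.

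Here is a concrete failure. Take rank vector $[1,2,1]$ with $P_{-2}=\{a\}$, $P_0=\{x,y\}$, $P_2=\{b\}$, and covers $x\gtrdot a$, $y\gtrdot a$, $b\gtrdot x$. Use the Proctor action $Ea=x+y$, $Ex=b$, $Ey=0$, $Fb=2(x+y)$, $Fx=2a$, $Fy=Fa=0$; one checks $[E,F]=H$ on all four elements. Step 2 may choose $A=\{y\}$, since $E_{y,a}\neq 0$. But then $E(\mathrm{span}(y))=0$, and $y$ has no upper cover at all. So your claim that $\mathrm{span}(A)$ is a complement to $\ker E$ is exactly the point left unproved, and it can fail.

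\textbf{The repair.} Your closing Lindstr\"om--Gessel--Viennot remark does fix this. However, it is not equivalent to Steps 2--3; it has to replace them, because $A$ must be chosen for both layers at once. By Cauchy--Binet,
\[
0\neq\det E^2_{P_2,P_{-2}}=\sum_{A\subseteq P_0,\ |A|=m}\det E_{P_2,A}\,\det E_{A,P_{-2}},
\]
so some $A$ has both minors nonzero. This gives perfect matchings $P_{-2}\leftrightarrow A\leftrightarrow P_2$ simultaneously. With this choice of $A$, the rest of your argument is fine: Case 1, Step 1, and singleton chains for $P_0\setminus A$.

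\textbf{Comparison with the paper.} The corrected proof is a direct linear-algebra version of the paper's one-line argument. The paper reuses the chains connecting the outer ranks: the outermost family of $m$ disjoint saturated chains from $P_n$ to $P_{-n}$ given by Griggs' property T, which is available because a Proctor action makes $P$ Peck. These chains automatically cover every rank except $|P_0|-m$ middle elements, and those become singleton chains.
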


\begin{proof}
The same argument works: The chains connecting the outer ranks will miss some vertices on the middle rank. Taking those missed vertices to be one vertex chains gives an SCD. 
\end{proof}

\begin{corollary}\label{allisotypic}
Let $V$ be a finite dimensional representation of $\mathfrak{sl}_2$ with either all even or all odd highest-weights.   If $\{b_1, b_2, \dots b_n\}$ is a weight basis for $V$ consisting entirely of isotypic vectors, then the corresponding support graph admits an SCD.
\end{corollary}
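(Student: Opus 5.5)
Since every basis vector $b_i$ is an isotypic weight vector, each lies in a single isotypic component $V(\lambda)$. The plan is therefore to split the support graph along isotypic components and apply Theorem \ref{onecomponent} to each piece. For each $\lambda$, let $B_\lambda = \{ b_i : b_i \in V(\lambda)\}$. The sets $B_\lambda$ partition the basis. Each isotypic component is a subrepresentation, so for $b_i \in B_\lambda$ both $E b_i$ and $F b_i$ lie in $V(\lambda)$. Since $V = \bigoplus_\lambda V(\lambda)$ and $V(\lambda)$ is spanned by $B_\lambda$ (the basis vectors partition, and dimensions match), the expansions of $Eb_i$ and $Fb_i$ in the basis involve only elements of $B_\lambda$. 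Consequently every edge of the support graph joins two vertices in the same $B_\lambda$, and the support graph is a disjoint union of the induced support graphs on the $B_\lambda$.

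Next I show that each piece $B_\lambda$, with its induced edges, admits an SCD. The restrictions of $E,F,H$ to $V(\lambda) = \operatorname{span}(B_\lambda)$ define a strong $\mathfrak{sl}_2$-action on the ranked poset $B_\lambda$. This action is in particular an action in the sense of Proctor. As a representation $V(\lambda) \cong L(\lambda)^{m_\lambda}$, so its weight spaces in weights $\lambda, \lambda-2, \dots, -\lambda$ all have dimension $m_\lambda$. Hence all nonempty ranks of $B_\lambda$ have the same size, and its ranks are symmetric about $0$ (from $\lambda$ down to $-\lambda$). Theorem \ref{onecomponent} therefore gives an SCD of $B_\lambda$. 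Concretely, $E$ restricted between adjacent weights is a bijection between equal-size rank sets, so its nonzero coefficients contain a perfect matching at each level.

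The SCDs of the pieces then combine into one for the whole graph. Each chain found in $B_\lambda$ runs from rank $\lambda$ to rank $-\lambda$ through consecutive ranks, so it is symmetric and saturated with respect to the ranking of the full support graph. Here the parity hypothesis (all even or all odd highest weights) ensures the ranks all share a parity, so the full support graph is a legitimately ranked poset under our conventions and the weights serve directly as the rank function. The union over $\lambda$ of these chain families covers every basis vector exactly once, which gives the desired SCD.

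The only step needing care is the claim that $E b_i$ and $F b_i$ expand using only vectors of $B_\lambda$. This relies on the $B_\lambda$ forming bases of the $V(\lambda)$, which follows from linear independence together with the direct sum decomposition. With that in hand, everything else is a direct application of Theorem \ref{onecomponent}.
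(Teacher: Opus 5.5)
Your proposal is correct and follows the paper's route: the support graph splits as a disjoint union over isotypic components, each piece $B_\lambda$ is a weight basis of $V(\lambda)\cong L(\lambda)^{m_\lambda}$ with equal-size ranks and so admits an SCD (via Theorem~\ref{onecomponent}), and these SCDs combine into one for the whole graph. You also supply details the paper's two-line proof leaves implicit, namely the dimension count showing each $B_\lambda$ spans $V(\lambda)$ and the role of the parity hypothesis in making the support graph a ranked poset.
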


\begin{proof}
The support graph decomposes as a disjoint union of support graphs for its isotypic components. Each of those components has an SCD, so the whole support graph does too. 
\end{proof}

\subsection{Diamond-free Posets} Four elements $x, y, z, w$ in a ranked poset form a \emph{diamond} if $z \gtrdot x \gtrdot w$ and $z \gtrdot y \gtrdot w$.  In the Hasse diagram this looks like:

\begin{center}
\includegraphics[scale=0.3]{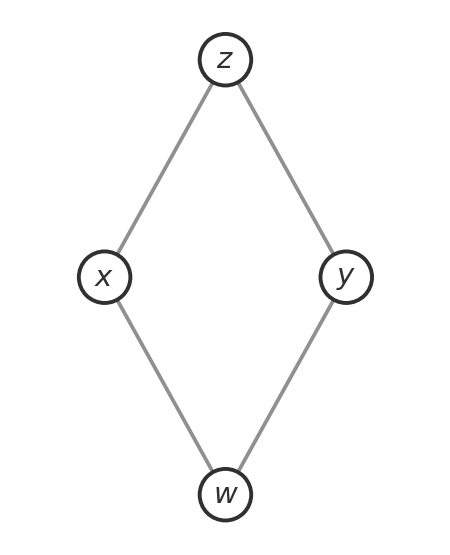}
\end{center}

We say a poset is \emph{diamond-free} if no four elements form a diamond.  Example \ref{smallestexample} is an example of a diamond free Peck poset, which we saw does not admit a strong $\mathfrak{sl}_2$-action.  The following theorem generalizes that result:

\begin{theorem}
If $P$ is a diamond-free ranked poset admitting a strong $\mathfrak{sl}_2$-action, then $P$ admits an SCD. 
\end{theorem}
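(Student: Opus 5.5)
The plan is to show that in a diamond-free poset every poset element is automatically an isotypic vector for any strong $\mathfrak{sl}_2$-action, and then finish with Corollary \ref{allisotypic}. The tool is the matrix-entry interpretation of products from Section \ref{graphdictionary}. Fix a rank $k$ and two distinct elements $x,y \in P_k$. Since $E$ and $F$ only move along cover relations, the $y$-coefficient of $EF(x)$ is
$$(EF)_{y,x} = \sum_{w \lessdot x,\ w \lessdot y} e_{w,y}f_{x,w},$$
a sum over common lower covers. Similarly, the $y$-coefficient of $FE(x)$ is
$$(FE)_{y,x} = \sum_{z \gtrdot x,\ z \gtrdot y} f_{z,y}e_{x,z},$$
a sum over common upper covers. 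The relation $EF - FE = H$ with $H$ diagonal gives $(EF)_{y,x} = (FE)_{y,x}$ for all $x \neq y$.

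The key combinatorial step is that diamond-freeness means $x$ and $y$ cannot have both a common lower cover and a common upper cover. Otherwise, with a common upper cover $z$ and a common lower cover $w$, the elements $z \gtrdot x,y \gtrdot w$ would form a diamond. Hence for each pair $x\neq y$ at least one of the two sums above is empty, so it is zero, and the equality then forces the other to be zero as well. Note that we do not claim each sum has at most one term; two elements may share several lower covers. We only use that one of the sums is vacuous. The conclusion is that $EF$ and $FE$ are both diagonal in the poset basis $P$.

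It follows that the Casimir operator $\Omega = EF + FE + \tfrac12 H^2 = 2EF - H + \tfrac12 H^2$ is diagonal in the basis $P$. By part (d) of the structure proposition for finite dimensional representations, the isotypic decomposition is the eigenspace decomposition of $\Omega$. So every $p \in P$ lies in a single isotypic component; equivalently, $P$ is a weight basis consisting entirely of isotypic vectors. A ranked poset is supported in a single parity of ranks, so the highest weights are all even or all odd. Moreover, since $P$ admits a strong action, it contains the support graph of that basis: delete the edges with both coefficients zero, which can only make finding an SCD harder. Corollary \ref{allisotypic} then gives an SCD of the support graph, and hence of $P$.

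The main obstacle I anticipate is the bookkeeping in the off-diagonal computation. One must check that the cover relations of $P$ really are the only possible two-step paths, which is immediate from the strong action definition. One must also be careful that the argument does not require uniqueness of common neighbours. If needed, I would double check that Corollary \ref{allisotypic} applies to a support graph contained in $P$ rather than equal to it, which is fine since an SCD of a subgraph on the same vertex set is an SCD of $P$.
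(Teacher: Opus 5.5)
Your proposal is correct and follows the paper's proof essentially step for step. In both, the diamond-free hypothesis (no pair $x\neq y$ has both a common upper cover and a common lower cover) together with $EF-FE=H$ forces $EF$ to be diagonal on each rank, so every poset element is isotypic, and Corollary \ref{allisotypic} finishes the argument. The only cosmetic difference is that you deduce isotypicity from the Casimir operator $\Omega$ being diagonal, while the paper uses directly that eigenvectors of $EF$ within a weight space are exactly the isotypic vectors. On a fixed weight space these are the same fact, since there $\Omega = 2EF - k + \tfrac{1}{2}k^2$.
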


\begin{proof}
Suppose a strong $\mathfrak{sl}_2$-action exists, and consider the action $EF$ on a rank $P_n$.  Suppose the matrix of $EF$ has a nonzero non-diagonal entry $(EF)_{x,y}$. Since $H = EF-FE$ acts diagonally on $P_n$, the corresponding entry $(FE)_{x,y}$  of $FE$ must also be nonzero.

A nonzero entry $(EF)_{x,y}$ must come from at least one pair of nonzero entries $E_{x,w}$ and $F_{w, y}$, and similarly nonzero entry $(FE)_{x,y}$ must come from at least one pair of nonzero entries $F_{x,z}$ and $E_{z, y}$.  However then $x,y,z$ and $w$ would form a diamond.  Therefore we conclude that $EF$ must act diagonally on $P_n$. 

As discussed in Section \ref{isotypicelements}, the eigenvectors $EF$ in a weight space are exactly the isotypic vectors.  Therefore every element of the poset must be an isotypic vector for this action, which puts us into the situation of Corollary \ref{allisotypic} and therefore we know an SCD exists. 
\end{proof}

\subsection{Width two posets} Next up we'll look at posets of width $2$, meaning the maximum rank size is $2$.  These correspond to representations $L(\lambda_1) \oplus L(\lambda_2)$.

\begin{theorem}
Let $P$ be a ranked poset with maximal rank size $2$.  $P$ admits a strong $\mathfrak{sl}_2$-action if and only if $P$ admits an SCD.
\end{theorem}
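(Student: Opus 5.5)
One direction is Proposition \ref{SCDimpliesstrong}, so we assume $P$ carries a strong $\mathfrak{sl}_2$-action and construct an SCD. Since all ranks have size at most $2$, $\mathbb{C}P \cong L(\lambda_1) \oplus L(\lambda_2)$ with $\lambda_1 \ge \lambda_2$. The rank vector is then $1,\dots,1,2,\dots,2,1,\dots,1$: size $2$ exactly in ranks $-\lambda_2,\dots,\lambda_2$ and size $1$ in the outer ranks. (If all ranks have size $2$, i.e. $\lambda_1 = \lambda_2$, Theorem \ref{onecomponent} applies directly, so assume $\lambda_1 > \lambda_2$.) In each rank $k$ with $|k| \le \lambda_2$ we have $P_k = \{a_k, b_k\}$, and the SCD we want is one long chain from $\lambda_1$ to $-\lambda_1$ plus one short chain from $\lambda_2$ to $-\lambda_2$.

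\textbf{Step 1: the outer ranks.} Each rank above $\lambda_2$ has a single element, and $E,F$ are injective or surjective as needed there. By unique neighbor propagation starting from the top element, the long chain is forced down to rank $\lambda_2+2$; symmetrically it is forced up to rank $-\lambda_2-2$. The element $t \in P_{\lambda_2+2}$ is $L(\lambda_1)$-isotypic, so $F(t)$ is a nonzero $L(\lambda_1)$-isotypic vector in $\mathrm{span}(a_{\lambda_2},b_{\lambda_2})$ supported on the downward neighbors of $t$. The $L(\lambda_2)$ highest-weight vector is the kernel of $E$ on $P_{\lambda_2}$.

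\textbf{Step 2: the middle band.} Within the ranks $-\lambda_2,\dots,\lambda_2$ we need two disjoint saturated chains: one joining the neighbor of $t$ to the neighbor of the bottom outer element, the other covering the remaining vertices. Each middle rank has exactly two elements, so the problem reduces to choosing, between consecutive middle ranks, either a perfect matching using poset edges or, at the two ends, compatible attachments. I would encode this as a sequence of $2\times 2$ bipartite graphs $G_k$ between $P_k$ and $P_{k-2}$. Injectivity of $F$ on these ranks means each $\det F_{P_k,P_{k-2}} \ne 0$, so each $G_k$ contains a perfect matching: either a ``straight'' one or a ``crossed'' one (both exist when $G_k$ is complete). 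Composing matchings gives the two chains, and the only freedom is which matching to use where $G_k$ is complete. This freedom is exactly what must be spent to make the endpoints land correctly.

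\textbf{Step 3: the endpoint compatibility (main obstacle).} Pick a vertex $x \in P_{\lambda_2}$ that is a downward neighbor of $t$ with nonzero $F$-coefficient, and a vertex $y \in P_{-\lambda_2}$ that is an upward neighbor of the bottom outer element $s$ with nonzero $E$-coefficient. The question is whether the composed matchings can route $x$ to $y$. If some $G_k$ is complete, we choose its matching to fix the parity and we are done. Otherwise every $G_k$ is a perfect matching, so the two middle chains are rigid, and the poset elements in the middle band are eigenvectors of $EF$ because there are no off-diagonal two-step paths. By the $EF$-eigenvector rule each middle element is then isotypic, and so is each whole rigid chain, since $F$ maps along it. Now $F^{\lambda_1}$ applied to the top element is nonzero and passes through $t$, $x$, and on to $y$ and $s$, so the $L(\lambda_1)$ chain through $x$ must connect to $s$. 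If instead $x$'s rigid chain reached the other vertex $y'$ of $P_{-\lambda_2}$, that chain would be $L(\lambda_1)$-isotypic. Then $F(y')$ must be nonzero, which forces $y'$ to have a downward neighbor, namely $s$, and so routing through $y'$ works anyway. The delicate part is handling the cases where $t$ or $s$ has two neighbors in the band; there I would use the uniqueness up to scalar of the $L(\lambda_1)$-isotypic vector in each middle rank, together with the $EF$-diagonal argument, to pin down a consistent endpoint choice.
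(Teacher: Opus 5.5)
\textbf{There is a genuine gap: you miss the three-edge middle layers.}

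Your outline agrees with the paper's first step: a complete layer gives enough freedom to route the long chain correctly. But your rigid case in Step 3 rests on a false dichotomy. A non-complete $2\times 2$ layer $G_k$ that contains a perfect matching need not \emph{be} a perfect matching. It can have three edges, e.g.\ $\{a,c\},\{a,d\},\{b,d\}$ between $P_k=\{a,b\}$ and $P_{k-2}=\{c,d\}$.

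Such a layer still has a unique perfect matching, so your chains are still rigid. However, your claim that there are ``no off-diagonal two-step paths'' is false there: $a\to d\to b$ is a down--up path. So $EF(a)$ may have a nonzero $b$-coefficient, and you cannot conclude from the combinatorics that $a$ and $b$ are $EF$-eigenvectors. Hence you have not shown that the chain through $x$ stays $L(\lambda_1)$-isotypic across such a layer. That is exactly what your $F(y')\neq 0$ endpoint argument needs.

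\textbf{How to repair it.} Propagate isotypicity along the unique matching edge by edge, instead of going through $EF$.
\begin{itemize}
\item In the three-edge layer, $b$'s only lower neighbor is $d$. Since $F$ is injective in the band, $F(b)$ is a nonzero multiple of $d$, so $d$ lies in $b$'s isotypic component.
\item $c$'s only upper neighbor is $a$, so $E(c)$ is a nonzero multiple of $a$. Since $E$ is injective from weight $k-2$ to weight $k$ on both components, $c$ lies in $a$'s isotypic component. This is the paper's ``reverse propagation.''
\end{itemize}
Thus isotypicity crosses both matching edges, and your endpoint argument then goes through.

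This is essentially the paper's proof. After disposing of complete layers, it checks that each of the five possible non-complete layer shapes passes ``every element of the upper rank is isotypic'' down to the lower rank. It then invokes Corollary~\ref{allisotypic} rather than routing chains by hand.

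\textbf{The case you call delicate is actually easy.} If $t$ has two neighbors in the band, that boundary layer is complete. With the middle rigid, just choose the neighbor of $t$ whose rigid chain ends at a neighbor of $s$; such a neighbor of $s$ exists because $E(s)\neq 0$. The case where $s$ has two neighbors is symmetric.
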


\begin{proof}

First note that if any pair of adjacent ranks $P_{n}$ and $P_{n-2}$, at least one of which has size $2$, is complete (meaning all possible edges between them exist) then $P$ admits an SCD.  To see this pick a matching on every layer.  If these matchings form an SCD we are done. If not, we can swap the matching at the complete layer to produce an SCD. 

Suppose $P$ is a width $2$ poset with a strong action of $\mathfrak{sl}_2$ and no complete layers (besides between ranks of size $1$).  Every layer looks like one of the following:

\begin{center}
\includegraphics[scale=0.2]{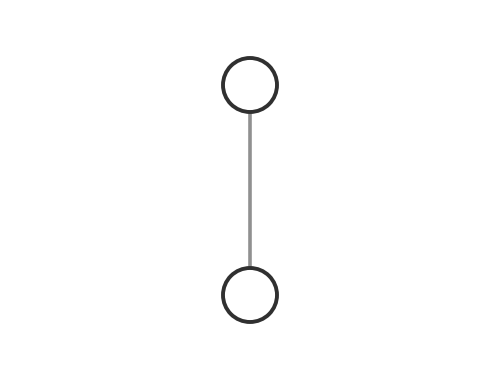}
\includegraphics[scale=0.2]{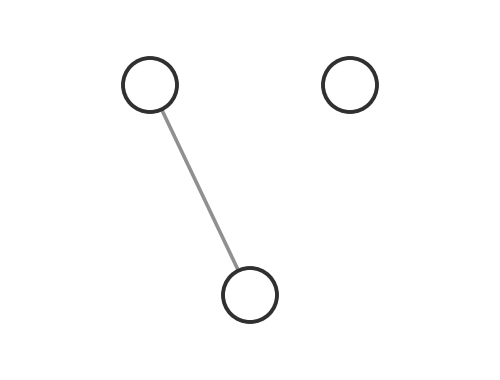}
\includegraphics[scale=0.2]{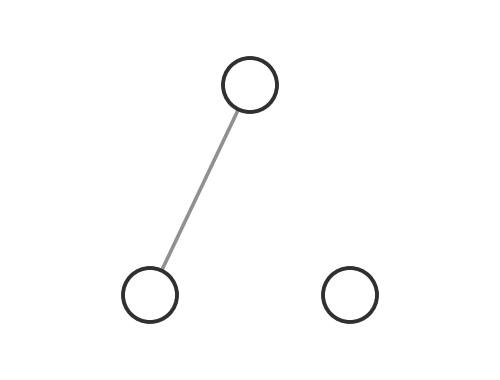}
\includegraphics[scale=0.2]{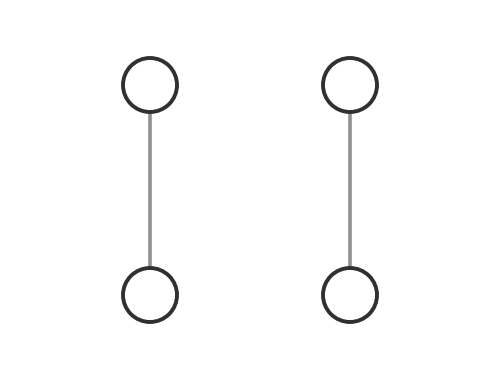}
\includegraphics[scale=0.2]{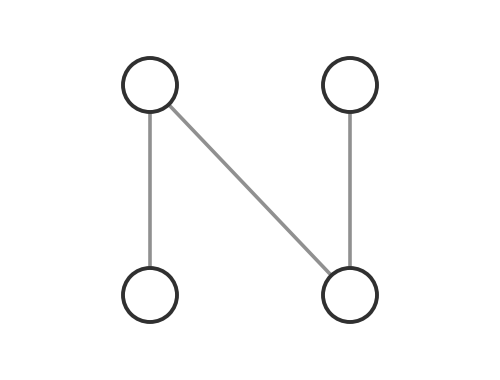}
\end{center}
All of these configurations have the property that if the top rank is known to consist of isotypic vectors then the bottom rank must also consist of isotypic vectors. In particular since the maximal rank in the poset always consists of isotypic vectors, this means the entire poset consists of isotypic vectors, so we know an SCD exists by Corollary \ref{allisotypic}.
\end{proof}

\noindent \textbf{Remark:} A variation of this argument can also be used to prove Conjecture \ref{theconjecture} for posets which are width $2$ except for a wide middle layer.  These correspond to representations $L(\lambda_1) \oplus L(\lambda_2) \oplus L(0)^k$.

\subsection{Rank 4 posets} 

The rank $2$ and $3$ cases of Conjecture \ref{theconjecture} are covered by Theorem \ref{onecomponent} and Corollary \ref{almostisotypic}. Indeed for those cases all Peck posets admit an SCD.  So in a sense rank $4$ is the first real case where the distinction between Proctor's $\mathfrak{sl}_2$-actions and our strong actions matters, and we have already seen several examples of rank $4$ posets which are Peck but do not admit an SCD (Examples \ref{smallestexample}, \ref{example1441},  \ref{example3553}, and \ref{example2442}). For the remainder of this section we will prove some special cases and partial results in the rank $4$ case. 

\subsubsection{Saturated and cosaturated sets}

  Let $P$ be a poset equipped with a strong $\mathfrak{sl}_2$-action.  A set $S \subseteq P_k$ is \emph{$L(\lambda)$-saturated} if the $\mathfrak{sl}_2$-equivariant projection $\pi_\lambda$  onto the $L(\lambda)$-isotypic component restricts to a bijection from $\text{span}(S)$ to the degree $k$ part of the $L(\lambda)$ component.   In other words, $S \subseteq P_k$ is $L(\lambda)$-saturated if $|S|$ is equal to the multiplicity of $L(\lambda)$ and $\text{span}(S)$ intersects the sum of the non-$L(\lambda)$-isotypic components trivially.

\begin{lemma}\label{saturated}
Let $S \subseteq P_k$ be a $L(\lambda)$-saturated set.

\begin{enumerate}[(a)]
\item If $k < \lambda$ then we can find a $L(\lambda)$-saturated set $S' \subseteq P_{k+2}$ such that there is a matching between $S$ and $S'$ consisting of edges with nonzero $E$-coefficients. 

\item If $k >  -\lambda$ then we can find a $L(\lambda)$-saturated set $S'' \subseteq P_{k-2}$ such that there is a matching between $S$ and $S''$ consisting of edges with nonzero $F$-coefficients. 
\end{enumerate}

\end{lemma}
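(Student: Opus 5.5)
We prove (a); part (b) is the mirror argument with $F$ in place of $E$. Write $m$ for the multiplicity of $L(\lambda)$, and let $V_j(\lambda)$ denote the weight-$j$ part of the $L(\lambda)$-isotypic component. The key facts are these. Since $k<\lambda$, the map $E: V_k(\lambda)\to V_{k+2}(\lambda)$ is an isomorphism, because $E$ is injective on $L(\lambda)$ below its top weight. Also $E$ commutes with the equivariant projection $\pi_\lambda$.

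First we show that $E$ maps $\text{span}(S)$ into a complement of the non-$\lambda$ part in a controlled way. Because $\pi_\lambda$ restricted to $\text{span}(S)$ is a bijection onto $V_k(\lambda)$, and $\pi_\lambda E = E\pi_\lambda$, the composite $\pi_\lambda\circ E$ restricted to $\text{span}(S)$ is a bijection onto $V_{k+2}(\lambda)$.

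Next, let $N\subseteq P_{k+2}$ be the set of upward neighbors of $S$ along edges with nonzero $E$-coefficients. Then $E(\text{span}(S))\subseteq \text{span}(N)$. Consider the $|S|\times|N|$ matrix $A$ of $E$ restricted to these edges, and the $|N|\times m$ matrix $B$ representing $\pi_\lambda$ on $\text{span}(N)$ in a fixed basis of $V_{k+2}(\lambda)$. The product $BA$ is the invertible $m\times m$ matrix of $\pi_\lambda E|_{\text{span}(S)}$.

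By Cauchy--Binet, there is an $m$-subset $S'\subseteq N$ such that both of the following hold:
\begin{itemize}
\item the minor $A_{S,S'}$ is nonzero;
\item the minor $B_{S'}$ is nonzero.
\end{itemize}
The first condition gives a perfect matching between $S$ and $S'$ along edges with all $E$-coefficients nonzero, by the matching interpretation of minors from Section \ref{graphdictionary}. The second says that $\pi_\lambda$ restricted to $\text{span}(S')$ is a bijection onto $V_{k+2}(\lambda)$. Since $|S'|=m$, this is exactly the statement that $S'$ is $L(\lambda)$-saturated.

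The main point needing care is this Cauchy--Binet step: choosing a single $S'$ that simultaneously carries a nonzero matching minor and a nonzero projection minor. We will expand $\det(BA)=\sum_{S'}\det B_{S'}\det A_{S,S'}$. Since the left side is nonzero, some term in the sum is nonzero, and that term supplies the required $S'$.
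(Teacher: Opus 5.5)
Your proof is correct and follows essentially the same route as the paper. Use $\pi_\lambda E = E\pi_\lambda$ to see that $\pi_\lambda \circ E$ is full rank on $\text{span}(S)$, then apply Cauchy--Binet to extract a single $S'$ whose $E$-minor and $\pi_\lambda$-minor are both nonzero; these give the matching and the saturation respectively. The only slip is cosmetic. With the sizes as you state them ($A$ is $|S|\times|N|$, $B$ is $|N|\times m$), the product $BA$ does not typecheck, so take $A$ to be $|N|\times|S|$ and $B$ to be $m\times|N|$, namely the matrices of $E\colon \text{span}(S)\to\text{span}(N)$ and $\pi_\lambda\colon \text{span}(N)\to V_{k+2}(\lambda)$.
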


\begin{proof}
We'll just prove $(a)$, as the proof for $(b)$ is similar. 

 Since $S$ is $L(\lambda)$-saturated, $\pi_\lambda$ is full rank from $\text{span}(S)$ to the degree $k$ part of the $L(\lambda)$ component.  Moreover, since $k < \lambda$ we know $E$ has full rank between ranks $k$ and $k+2$ on the $L(\lambda)$ component.  Therefore $E \circ \pi_\lambda$ is a full rank map from $\text{span}(S)$ to the degree $k+2$ part of the $L(\lambda)$ component.
 
 However since $\pi_\lambda$ is an equivariant map we have $E \circ \pi_\lambda = \pi_\lambda \circ E$. So Cauchy-Binet tells us that if $\pi_\lambda \circ E$ has full rank we can find $S' \subset P_{k+2}$ such that both $E_{S',S}$ and $(\pi_{\lambda})_{L(\lambda), S'}$ have full rank.  Those conditions correspond to the existence of a matching along edges with nonzero $E$-coefficients and $S'$ being $L(\lambda)$-saturated respectively. \end{proof}

Dually we say that $S \subseteq P_k$ is \emph{$L(\lambda)$-cosaturated} if the coordinate projection $\pi_S$ restricts to a bijection from the weight $k$ part of the $L(\lambda)$-isotypic component to $\text{span}(S)$.  In other words, $S \subseteq P_k$ is $L(\lambda)$-cosaturated if  $|S|$ is equal to the multiplicity of $L(\lambda)$ and the span of its complement $\text{span}(S^c)$ intersects the $L(\lambda)$-isotypic component trivially.  We get a similar result:

\begin{lemma}\label{cosaturated}
Let $S \subseteq P_k$ be a $L(\lambda)$-cosaturated set.

\begin{enumerate}[(a)]
\item If $k < \lambda$ then we can find a $L(\lambda)$-cosaturated set $S' \subseteq P_{k+2}$ such that there is a matching between $S$ and $S'$ consisting of edges with nonzero $F$-coefficients. 

\item If $k >  -\lambda$ then we can find a $L(\lambda)$-cosaturated set $S'' \subseteq P_{k-2}$ such that there is a matching between $S$ and $S''$ consisting of edges with nonzero $E$-coefficients. 
\end{enumerate}

\end{lemma}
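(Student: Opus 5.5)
We mirror the proof of Lemma \ref{saturated}, replacing the equivariant projection $\pi_\lambda$ by the inclusion of the isotypic component and the operator $E$ by $F$ (or vice versa). We prove (a); part (b) follows by the same argument with the roles of $E$ and $F$ exchanged.

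Let $V(\lambda)_k$ be the weight $k$ part of the $L(\lambda)$-isotypic component, of dimension $m_\lambda$. Let $\iota_k: V(\lambda)_k \to \mathbb{C}P_k$ be the inclusion and $\pi_S$ the coordinate projection onto $\text{span}(S)$. Cosaturation of $S$ says exactly that $\pi_S \circ \iota_k$ is an isomorphism. In matrix terms, if we pick a basis of $V(\lambda)_k$, the $S$-rows of the $|P_k| \times m_\lambda$ matrix of $\iota_k$ form a nonsingular square minor. Note the transpose relationship with saturation: there we looked at columns of $\pi_\lambda$ restricted to $S$; here we look at rows of $\iota$ indexed by $S$. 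This is why the operator switches from $E$ to $F$.

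Since $k<\lambda$, the map $F: V(\lambda)_{k+2} \to V(\lambda)_k$ is an isomorphism on the $L(\lambda)$ component, since $F$ is injective from weight $k+2$ down to weight $k$ whenever $k+2 > -\lambda$. Because $F$ commutes with the inclusion of the submodule, we get $F \circ \iota_{k+2} = \iota_k \circ F|_{V(\lambda)_{k+2}}$. Hence $\pi_S \circ F \circ \iota_{k+2}$ is an isomorphism from $V(\lambda)_{k+2}$ to $\text{span}(S)$, so the $S \times (\text{basis})$ matrix $F_{S,P_{k+2}} \cdot (\iota_{k+2})_{P_{k+2},\cdot}$ is a nonsingular $m_\lambda \times m_\lambda$ matrix. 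By Cauchy--Binet (equivalently Lindstr\"om--Gessel--Viennot, Section \ref{graphdictionary}), its determinant is a sum over $S' \subseteq P_{k+2}$ with $|S'| = m_\lambda$ of $\det F_{S,S'} \cdot \det (\iota_{k+2})_{S',\cdot}$. Some term must be nonzero. For that $S'$:
\begin{itemize}
\item $\det F_{S,S'}\neq 0$ gives a perfect matching between $S$ and $S'$ along edges with nonzero $F$-coefficients (each $F$-coefficient sits on a cover edge from $S'$ down to $S$, by the minor/matching dictionary);
\item $\det(\iota_{k+2})_{S',\cdot}\neq 0$ says $\pi_{S'}\circ\iota_{k+2}$ is a bijection, i.e.\ $S'$ is $L(\lambda)$-cosaturated.
\end{itemize}

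For (b), with $k>-\lambda$, we use $E: V(\lambda)_{k-2}\to V(\lambda)_k$, which is an isomorphism because $k-2<\lambda$. The same Cauchy--Binet step then produces $S''\subseteq P_{k-2}$ with an $E$-matching to $S$. The only real care needed is bookkeeping the index conventions: which operator maps into rank $k$, so that the product is $\pi_S \circ (\text{operator}) \circ \iota$ rather than the reverse order used in Lemma \ref{saturated}. Once this is set up, no further obstacle is expected.
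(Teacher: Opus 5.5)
Your proof is correct and is essentially the paper's argument. The paper's proof just says to redo Lemma \ref{saturated} with the inclusion $\iota_\lambda$ in place of $\pi_\lambda$, and you carry this out in full, correctly reversing the order so that the invertible composite is $\pi_S \circ F \circ \iota_{k+2}$ (resp.\ $\pi_S \circ E \circ \iota_{k-2}$), whose Cauchy--Binet expansion yields both the $F$- (resp.\ $E$-) matching and the cosaturation of $S'$ (resp.\ $S''$).
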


\begin{proof}
Nearly identical to the proof of \ref{saturated}, just using the inclusion map $\iota_\lambda$ from the $L(\lambda)$ component into the representation instead of the projection map $\pi_\lambda$.
\end{proof}

The point is that these are  linear algebraic conditions which ensure that $S$ could be the intersection of a collection of $L(\lambda)$-chains with $P_k$. 

\begin{corollary}
If $S \subseteq P_k$ is either $L(\lambda)$-saturated or $L(\lambda)$-cosaturated, then we can find $m_\lambda$ disjoint saturated chains connecting $P_\lambda$ to $P_{-\lambda}$ which intersect $P_k$ at $S$, where $m_\lambda$ is the multiplicity of $L(\lambda)$.
\end{corollary}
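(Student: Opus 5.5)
We iterate Lemmas \ref{saturated} and \ref{cosaturated} outward from $S$ in both directions. Treat the $L(\lambda)$-saturated case first. Note that $|S| = m_\lambda$ by definition, and that necessarily $-\lambda \le k \le \lambda$; otherwise the weight $k$ part of the $L(\lambda)$-component would be zero and $S$ would be empty.

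\textbf{Upward sweep.} Set $S_k = S$. For $j = k, k+2, \dots, \lambda-2$, given an $L(\lambda)$-saturated set $S_j \subseteq P_j$, apply Lemma \ref{saturated}(a) to get an $L(\lambda)$-saturated set $S_{j+2} \subseteq P_{j+2}$ together with a perfect matching $M_j$ between $S_j$ and $S_{j+2}$. Every edge of $M_j$ has a nonzero $E$-coefficient, so in particular every edge of $M_j$ is a cover relation of $P$. This continues until we reach $S_\lambda \subseteq P_\lambda$.

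\textbf{Downward sweep.} Symmetrically, for $j = k, k-2, \dots, -\lambda+2$, apply Lemma \ref{saturated}(b) to get $S_{j-2} \subseteq P_{j-2}$ with a matching along edges of nonzero $F$-coefficient. These edges are again cover relations, and we continue down to $S_{-\lambda}$. All the sets $S_j$ have size $m_\lambda$.

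\textbf{Assembling the chains.} Concatenate the matchings $M_j$ for $-\lambda \le j < \lambda$. Each element of $S_{-\lambda}$ starts a unique path that climbs one rank at a time through $S_{-\lambda+2}, \dots, S_\lambda$. Since consecutive matchings are bijections, these $m_\lambda$ paths are vertex-disjoint. Each path is a saturated chain from $P_{-\lambda}$ to $P_\lambda$, because its consecutive elements are related by covers. By construction the chains meet $P_k$ exactly in $S_k = S$.

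\textbf{Cosaturated case.} The argument is identical using Lemma \ref{cosaturated}, where the roles of $E$ and $F$ on the matching edges are swapped. Only the fact that the matching edges are cover relations is used, so this swap does not matter.

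The only real point to check is that the lemmas apply at every step. Their hypotheses are exactly that the current rank $j$ satisfies $j < \lambda$ (going up) or $j > -\lambda$ (going down), and that the current set is saturated (respectively cosaturated). The lemmas themselves output sets with the same property, so the induction closes. The endpoint ranks $\pm\lambda$ are exactly where we stop, so there is no substantive obstacle beyond this bookkeeping.
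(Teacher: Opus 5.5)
Your proposal is correct and takes the same approach as the paper, whose proof is the one-line instruction to grow the paths outward from $S$ using Lemma~\ref{saturated} or Lemma~\ref{cosaturated}. You have made that induction explicit, including the two points the paper leaves implicit: edges with nonzero $E$- or $F$-coefficients are cover relations, and the bijective matchings between consecutive ranks concatenate into $m_\lambda$ vertex-disjoint saturated chains that meet $P_k$ exactly in $S$.
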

\begin{proof}
Use Lemma \ref{saturated} or \ref{cosaturated} to grow the paths outward from $S$. 
\end{proof}

 Now let's turn our attention back to rank $4$.  We only have two isotypic components in this case, so we get a tighter relationship between saturated and cosaturated sets: 

\begin{lemma} \label{rank4complements}

Let $P$ be a rank $4$ poset with a strong $\mathfrak{sl}_2$-action.

\begin{enumerate}

\item $S \subseteq P_{\pm 1}$ is $L(3)$-saturated if and only if $S^c = P_{\pm1} - S$ is $L(1)$-cosaturated.

\item $S \subseteq P_{\pm 1}$ is $L(3)$-cosaturated if and only if $S^c = P_{\pm1} - S$ is $L(1)$-saturated.

\end{enumerate}
\end{lemma}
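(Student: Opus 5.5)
In rank $4$ the representation is $\mathbb{C}P \cong L(3)^{m_3} \oplus L(1)^{m_1}$. The weight space $V_{\pm1} = \mathbb{C}P_{\pm1}$ therefore splits as a direct sum of exactly two pieces, $V_{\pm1} = A \oplus B$, where $A$ is the weight $\pm1$ part of the $L(3)$-isotypic component (of dimension $m_3$) and $B$ is the weight $\pm1$ part of the $L(1)$-isotypic component (of dimension $m_1$). The plan is to translate both definitions into statements about complementary subspaces and then observe that each equivalence is a single linear-algebra identity.

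Fix $S \subseteq P_{\pm1}$ and put $U = \text{span}(S)$, $U^c = \text{span}(S^c)$, so that $V_{\pm1} = U \oplus U^c$ and $|S| + |S^c| = m_3 + m_1$. The definitions unpack as follows.
\begin{itemize}
\item $S$ is $L(3)$-saturated iff $|S| = m_3$ and $U \cap B = 0$. This holds because the sum of the non-$L(3)$ components in this weight is exactly $B$.
\item $S^c$ is $L(1)$-cosaturated iff $|S^c| = m_1$ and $U \cap B = 0$. This holds because the span of the complement of $S^c$ is $U$, and the relevant isotypic piece is $B$.
\end{itemize}
The size conditions are equivalent, since $|S| = m_3 \iff |S^c| = m_1$. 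The intersection conditions are literally the same condition $U \cap B = 0$. So part (1) is immediate once both definitions are written in this "In other words" form.

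Part (2) goes the same way. $S$ is $L(3)$-cosaturated iff $|S| = m_3$ and $U^c \cap A = 0$. $S^c$ is $L(1)$-saturated iff $|S^c| = m_1$ and $U^c$ meets the non-$L(1)$ part trivially, and that part is $A$. Again the two conditions match exactly.

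The only real check is that the paper's two phrasings of each definition are equivalent. For saturated: the map $\pi_\lambda|_{U}$ has kernel $U \cap(\text{other components})$, so it is a bijection onto the weight $k$ piece of the $L(\lambda)$ component iff it is injective and the dimensions agree. For cosaturated: the coordinate projection $\pi_S$ restricted to the weight $k$ piece of the $L(\lambda)$ component has kernel equal to that piece intersected with $\text{span}(S^c)$, and the same dimension count applies. I expect no genuine obstacle beyond this bookkeeping. The key point to stress is that rank $4$ gives exactly two isotypic components, so "the other components" coincides with the single complementary isotypic piece; with three or more components the statement would fail.
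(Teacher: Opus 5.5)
Your proof is correct and is essentially the paper's argument: rewrite both notions in their ``in other words'' form, and note that weight $\pm1$ only sees the $L(3)$ and $L(1)$ components. So ``meets the other components trivially'' for one set becomes literally ``meets the other component trivially'' for its complement. Your write-up is slightly more careful than the paper's one-line proof, since you explicitly match the cardinality conditions via $|S|+|S^c| = m_3+m_1$ and check that the two phrasings of each definition agree.
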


\begin{proof}
In general a set $T \subseteq P_k$ is $L(\lambda)$-saturated if $\text{span}(T^c)$ contains no nonzero $L(\lambda)$-isotypic vectors,  and is $L(\lambda)$-cosaturated if $\text{span}(T)$ intersects the sum of the other isotypic components trivially.  In particular if there are only two isotypic components then the cosaturated condition just becomes ``$T$ is $L(\lambda)$-cosaturated if $\text{span}(T)$ intersects the other component trivially" -- but that exactly means $T^c$ is saturated for the other component.
\end{proof}

\subsubsection{Rank vectors $[1,a+1,a+1,1]$ and $[a, a+1, a+1, a]$} We will now prove Conjecture \ref{theconjecture} in the case where the rank vector of the poset is either $[1,a+1,a+1,1]$ or $[a, a+1, a+1, a]$.  In other words, when the underlying representation is either $L(3) \oplus L(1)^a$ or $L(3)^a \oplus L(1)$. 

 As noted in Section \ref{graphdictionary}, if a square matrix has nonzero determinant then the corresponding bipartite graph has a perfect matching.  The following lemma gives us a mild strengthening of this when we have a block decomposition satisfying certain rank conditions. 

\begin{lemma}\label{matrixlemma}
Suppose $n = k+ \ell = p+q$, and $M$ is an invertible $n \times n$ matrix with a $(k+\ell) \times (p+q)$ block decomposition
$$M =  \begin{bmatrix}
A & B\\
C & D 
\end{bmatrix}$$
such that $\text{rank}(C) < p$ . Then the corresponding graph has a perfect matching using at least one edge corresponding to an entry in $A$. 

\begin{proof}
Expand the determinant along the first $p$ columns.  Because $\det(M) \ne 0$,  some $p \times p$ minor using those columns and its complementary minor must both be nonzero. Since $\text{rank}(C) < p$ the first minor must use at least one row from $A$.  The two complementary minors therefore give a matching using an edge corresponding to an entry of $A$. \end{proof}

\end{lemma} 

\begin{theorem}\label{multiplicityone}
Let $P$ be a poset with rank vector $[1,a+1,a+1,1]$.    $P$ admits a strong $\mathfrak{sl}_2$-action if and only if $P$ admits a symmetric chain decomposition. 
\end{theorem}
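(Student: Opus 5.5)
By Proposition \ref{SCDimpliesstrong} only the forward direction needs proof, so assume $P$ carries a strong $\mathfrak{sl}_2$-action. The rank vector $[1,a+1,a+1,1]$ forces the representation $\mathbb{C}P \cong L(3)\oplus L(1)^a$. Write $P_3=\{r\}$ and $P_{-3}=\{u\}$. An SCD must consist of one long chain $r \gtrdot s \gtrdot t \gtrdot u$ together with $a$ two-element chains that match $P_1\setminus\{s\}$ with $P_{-1}\setminus\{t\}$ along edges. So the plan is to find such $s,t$ together with a matching of the complements, all at once, by applying Lemma \ref{matrixlemma} to the single matrix $M = F|_{\mathbb{C}P_1 \to \mathbb{C}P_{-1}}$.

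\textbf{Step 1: the matrix $M$ is invertible.} In $L(3)\oplus L(1)^a$, $F$ is injective from weight $1$ to weight $-1$ on every summand. Hence $M$ is an invertible $(a+1)\times(a+1)$ matrix. Because the action is strong, every nonzero entry of $M$ lies on an edge of $P$.

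\textbf{Step 2: locate the $L(3)$ vectors.} Set $\vec v = F(r)$ and $\vec w = F(\vec v) = F^2(r)$. These are nonzero $L(3)$-isotypic vectors in ranks $1$ and $-1$.
\begin{itemize}
\item Let $S_0 = \text{supp}(\vec v) \subseteq P_1$. Every element of $S_0$ is a downward neighbor of $r$.
\item Since $\vec w$ is the unique $L(3)$ vector in rank $-1$ up to scalar, it is proportional to $E(u)$. So $T_0 = \text{supp}(\vec w) \subseteq P_{-1}$ consists of upward neighbors of $u$.
\end{itemize}
This is where strongness is used: it guarantees that both supports lie in the correct neighborhoods.

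\textbf{Step 3: apply Lemma \ref{matrixlemma}.} Order the columns of $M$ with $S_0$ first (so $p=|S_0|$) and the rows with $T_0$ first. In the resulting block form, $A = M_{T_0,S_0}$ and $C = M_{T_0^c,S_0}$. Multiplying $C$ by the nonzero vector $\vec v|_{S_0}$ gives $\vec w|_{T_0^c} = 0$. Hence $C$ has a nontrivial kernel and $\text{rank}(C) < p$. Lemma \ref{matrixlemma} then gives a perfect matching of the bipartite graph of $M$ that uses an edge $(s,t)$ with $s\in S_0$ and $t \in T_0$.

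\textbf{Step 4: assemble the SCD.} The chain $r\gtrdot s\gtrdot t\gtrdot u$ is valid, because $s$ is a neighbor of $r$, $t$ is a neighbor of $u$, and $(s,t)$ is an edge. The remaining edges of the matching pair $P_1\setminus\{s\}$ with $P_{-1}\setminus\{t\}$ and form the length-$2$ chains. Together these give an SCD.

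The step I expect to carry the argument is the rank bound on $C$ in Step 3. It turns "the $L(3)$ vector in rank $1$ maps onto the $L(3)$ vector in rank $-1$" into exactly the hypothesis of Lemma \ref{matrixlemma}. If that setup failed, my fallback would be the saturated and cosaturated language: $\{s\}$ is $L(3)$-cosaturated precisely when $s\in S_0$, and its complement is $L(1)$-saturated by Lemma \ref{rank4complements}. I would then grow matchings from there using Lemmas \ref{saturated} and \ref{cosaturated}, though making the two choices compatible seems harder along that route.
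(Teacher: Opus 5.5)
Your proof is correct and is essentially the paper's argument: both apply Lemma \ref{matrixlemma} to the invertible middle-layer matrix and get $\text{rank}(C) < p$ because the unique (up to scalar) $L(3)$-isotypic vector in one middle rank is carried to the unique one in the other middle rank. The differences are cosmetic. The paper uses $E\colon \mathbb{C}P_{-1}\to\mathbb{C}P_1$ with blocks cut out by the full neighborhoods of $r$ and $u$, whereas you use $F$ with blocks cut out by the supports of $F(r)$ and $F^2(r)$; also, only the support of $F(r)$, not that of $E(u)$, actually needs strongness.
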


\begin{proof}
Fix a strong $\mathfrak{sl}_2$-action on $P$.  Let $P_1 = S \sqcup T$ where $S$ is the set of vertices with an edge to $P_3$, and $T$ is the set of vertices which do not.  Similarly let $P_{-1} = S' \sqcup T'$ where $S'$ is the set of vertices with an edge to $P_{-3}$, and $T'$ is the set of vertices which do not.

\begin{center}
\includegraphics[scale=0.25]{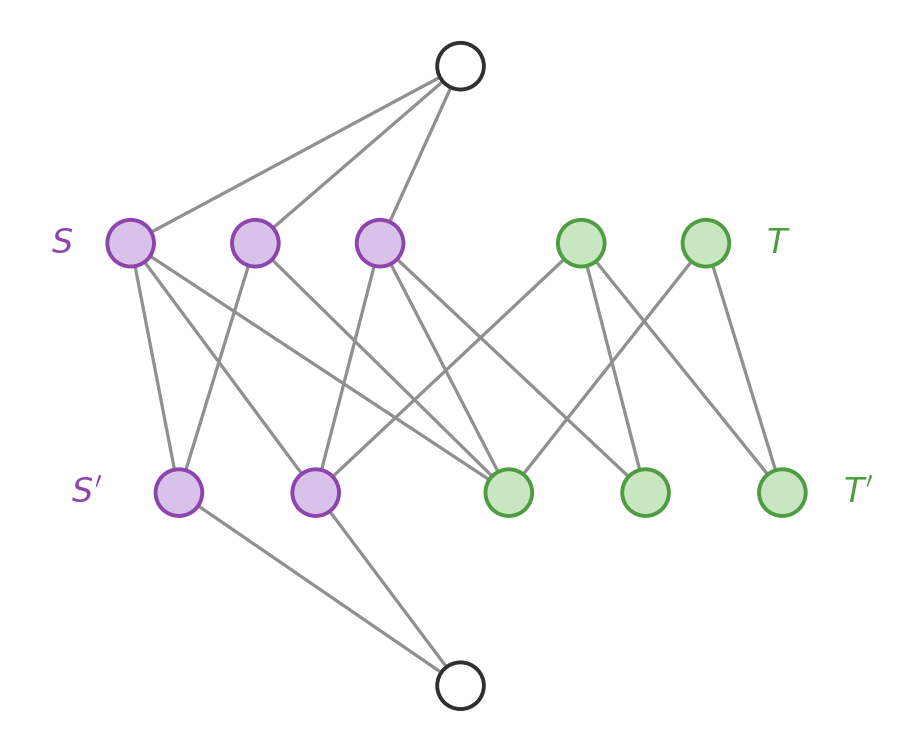}
\end{center}

It suffices to show that we can find a perfect matching on the middle layer such that at least one element of $S$ gets matched with an element of $S'$.   Consider the block decomposition of $E_{P_{1},P_{-1}}$ corresponding to the decompositions $P_1 = S \sqcup T$ and $P_{-1} = S' \sqcup T'$:  

$$E_{P_{-1},P_1} =  \begin{bmatrix}
E_{S,S'} & E_{S,T'} \\
 E_{T, S'} & E_{T,T'}
\end{bmatrix}$$

In particular if we can show the conditions of Lemma \ref{matrixlemma} hold that would prove the theorem.  We already know $E$ is invertible between those ranks, so we just need to check that $\text{rank}( E_{T, S'}) < |S'|$.

To see this, notice that $\text{span}(S')$ contains the unique (up to scaling) $L(3)$-isotypic vector in rank $-1$.  This vector gets mapped by $E$ to the unique $L(3)$-isotypic vector in rank $1$, which we similarly know is contained in $\text{span}(S)$.   The matrix $E_{T, S'}$ corresponds to applying $E$ to $\text{span}(S')$ and then projecting onto $\text{span}(T)$ killing $\text{span}(S)$.  In particular the unique  $L(3)$-isotypic vector gets killed in the second step,   so $\text{rank}( E_{T, S'}) < |S'|$. 
\end{proof} 

\begin{theorem}
Let $P$ is a poset with rank vector $[a,a+1,a+1,a]$.    $P$ admits a strong $\mathfrak{sl}_2$-action if and only if $P$ admits a symmetric chain decomposition. 
\end{theorem}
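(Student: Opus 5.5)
The plan mirrors the proof of Theorem \ref{multiplicityone}, with the roles of the two isotypic components swapped. The representation is now $L(3)^a \oplus L(1)$, so it is the $L(1)$-component that is one-dimensional in each of the ranks $\pm 1$. An SCD consists of $a$ chains of length $4$ and one chain of length $2$ in the middle. So it suffices to find $x \in P_1$ and $y \in P_{-1}$ with $x \gtrdot y$ such that:
\begin{enumerate}
\item $P_1 \setminus \{x\}$ is perfectly matched to $P_3$;
\item $P_{-1}\setminus\{y\}$ is perfectly matched to $P_{-3}$;
\item $P_1\setminus\{x\}$ is perfectly matched to $P_{-1}\setminus\{y\}$.
\end{enumerate}
The chain $x \gtrdot y$ is then the $L(1)$-chain, and the matchings assemble into $a$ chains from $P_3$ to $P_{-3}$.

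\textbf{Conditions (1) and (2).} By Lemma \ref{rank4complements}, $P_1 \setminus\{x\}$ is $L(3)$-cosaturated if and only if $\{x\}$ is $L(1)$-saturated. Since $|P_3| = a$ equals the multiplicity of $L(3)$, Lemma \ref{cosaturated}(a) then produces exactly the perfect matching required in (1), along edges with nonzero $F$-coefficients. Condition (2) follows from the same lemmas applied to $y$.

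To make the saturation condition concrete, write the $L(1)$-projection on rank $1$ as $\pi = w \otimes \alpha$, where $w$ spans the $L(1)$ part in rank $1$, $\alpha$ vanishes on the $L(3)$ part, and $\alpha(w)=1$. On rank $-1$, equivariance gives the projection $v \otimes \beta$ with $Ev = w$ and $\beta = \alpha \circ E$. In coordinates, the requirements become $\alpha_x \neq 0$ and $\beta_y \neq 0$, meaning the $L(3)$-component does not contain the basis vector $x$ (resp.\ $y$) in its kernel complement.

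\textbf{Condition (3).} Let $M = E_{P_1,P_{-1}}$, which is invertible. Expanding $\det M$ along a single entry shows that a perfect matching of the middle layer through the edge $(x,y)$ exists exactly when
\[
E_{x,y}\,\cdot\,\mathrm{cof}_{x,y}(M) = \det(M)\, E_{x,y}\,(M^{-1})_{y,x} \neq 0 .
\]
So the whole theorem reduces to exhibiting a pair $(x,y)$ with
\[
\alpha_x\, E_{x,y}\,(M^{-1})_{y,x}\,\beta_y \neq 0 .
\]

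\textbf{Main obstacle and intended attack.} This is the step I expect to be hardest. I would first try to evaluate the sum
\[
\Sigma = \sum_{x,y} \alpha_x E_{x,y} (M^{-1})_{y,x} \beta_y
\]
via the $\mathfrak{sl}_2$ structure and show it is nonzero. The tools are $\beta = \alpha M$, $\alpha(w)=1$, $Mv = w$, and the fact that $\ker\alpha$ is the $E$-image of the $L(3)$-part of rank $-1$. Equivalently, $\Sigma$ is the derivative at $t=0$ of $\det\bigl(M_{x,y}(1+t\alpha_x\beta_y)\bigr)/\det M$, which could be computed as a trace.

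If this sum turns out to vanish identically, I would fall back on a block argument in the spirit of Lemma \ref{matrixlemma}. Let $X = \{x : \alpha_x \neq 0\}$ and $Y = \{y : \beta_y \neq 0\}$, and decompose $M$ into blocks according to these sets. The aim is to show that the block of $M$ from $P_{-1}\setminus Y$ to $P_1\setminus X$ is rank-deficient. The reason to expect this is that $\mathrm{span}(P_1\setminus X) \subseteq \ker\alpha$ is $L(3)$-isotypic, and $\mathrm{span}(P_{-1}\setminus Y)\subseteq \ker\beta$ maps into $\ker \alpha$, so their complements must carry the $L(1)$ part. Such a rank deficiency would force every nonzero term of the determinant expansion to use an edge in the block from $Y$ to $X$, which is what condition (3) requires.
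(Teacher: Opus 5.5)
\textbf{Verdict: there is a genuine gap at condition (3).} Your conditions (1) and (2) are handled correctly. You use Lemma \ref{rank4complements} and Lemma \ref{cosaturated}, together with the fact that $\{x\}$ is $L(1)$-saturated iff $\alpha_x\neq 0$. Reducing the theorem to finding a perfect matching of the middle layer through an edge $x\gtrdot y$ with $\alpha_x\neq 0\neq\beta_y$ is also right. But condition (3) is never proved, and neither route you propose can prove it.

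\textbf{The cofactor route fails.} The target $\alpha_xE_{x,y}(M^{-1})_{y,x}\beta_y\neq 0$ is only sufficient for a matching through $(x,y)$, not ``exactly when''. It can fail for every pair, so in particular $\Sigma$ can vanish. Take $a=3$ and a rank $-1$ weight basis $y_1,\dots,y_4$ of $L(3)^3\oplus L(1)$ whose $L(1)$-coordinates are $\beta=(1,1,0,0)$. Define $x_1,\dots,x_4$ in rank $1$ so that $E$ has matrix
\[
M=\begin{pmatrix}1&0&1&0\\0&1&-1&0\\1&-1&1&0\\0&0&0&1\end{pmatrix},\qquad
M^{-1}=\begin{pmatrix}0&1&1&0\\1&0&-1&0\\1&-1&-1&0\\0&0&0&1\end{pmatrix}.
\]
Take the support graph as the poset. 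Then $\alpha=\beta M^{-1}=(1,1,0,0)$, so $X=\{x_1,x_2\}$ and $Y=\{y_1,y_2\}$. Now $E_{x,y}(M^{-1})_{y,x}=0$ for all four pairs in $X\times Y$, even though the diagonal matching passes through $x_1\gtrdot y_1$.

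\textbf{The fallback targets the wrong block.} The block $M_{P_1\setminus X,\,P_{-1}\setminus Y}$ can never be deficient enough. The map $E$ sends $\mathrm{span}(P_{-1}\setminus Y)$ injectively into $\ker\alpha$. On $\ker\alpha$, the coordinate projection onto $P_1\setminus X$ kills only $\ker\alpha\cap\mathrm{span}(X)$, which has dimension $|X|-1$. So this block has rank at least $(a+1)-|X|-|Y|+1$. Your argument (zero out the $X\times Y$ block and conclude the matrix becomes singular) would need rank strictly below $(a+1)-|X|-|Y|$.

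\textbf{The missing observation} is the relation $\beta=\alpha M$, which you already listed among your tools. Since $\alpha$ is supported on $X$, every $y\notin Y$ satisfies $\sum_{x\in X}\alpha_xM_{x,y}=\beta_y=0$. So $(\alpha_x)_{x\in X}$ is a nonzero left null vector of the off-diagonal block $M_{X,\,P_{-1}\setminus Y}$, which therefore has rank $<|X|$. Lemma \ref{matrixlemma} applied to $M^{T}$ then gives a perfect matching of the middle layer using an edge $x\gtrdot y$ with $x\in X$, $y\in Y$. Your (1) and (2) then assemble this into an SCD.

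This is the transpose of the paper's argument. The paper takes $S,S'$ to be the singletons that are $L(1)$-saturated or $L(1)$-cosaturated. As in Theorem \ref{multiplicityone}, it uses a right null vector instead: $v\in\mathrm{span}(S')$ and $Ev=w\in\mathrm{span}(S)$, so $E_{T,S'}$ kills $v$ and $\mathrm{rank}(E_{T,S'})<|S'|$.
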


\begin{proof}
The proof will proceed similarly to the $[1, a+1, a+1, 1]$ case. We define the decompositions $P_1 = S \sqcup T$ and $P_{-1} = S' \sqcup T'$ slightly differently.  
Let $S$ be the set of those $x \in P_1$ that either have nonzero projection onto the $L(1)$-isotypic component, or those such that the $L(1)$-isotypic component is not killed by the coordinate projection onto $x$. In other words those $x$ such that $\{x\}$ is either $L(1)$-saturated or $L(1)$-cosaturated.   Similarly define $S' \subseteq P_{-1}$ and let $T$ and $T'$ be their respective complements. 

\begin{center}
\includegraphics[scale=0.25]{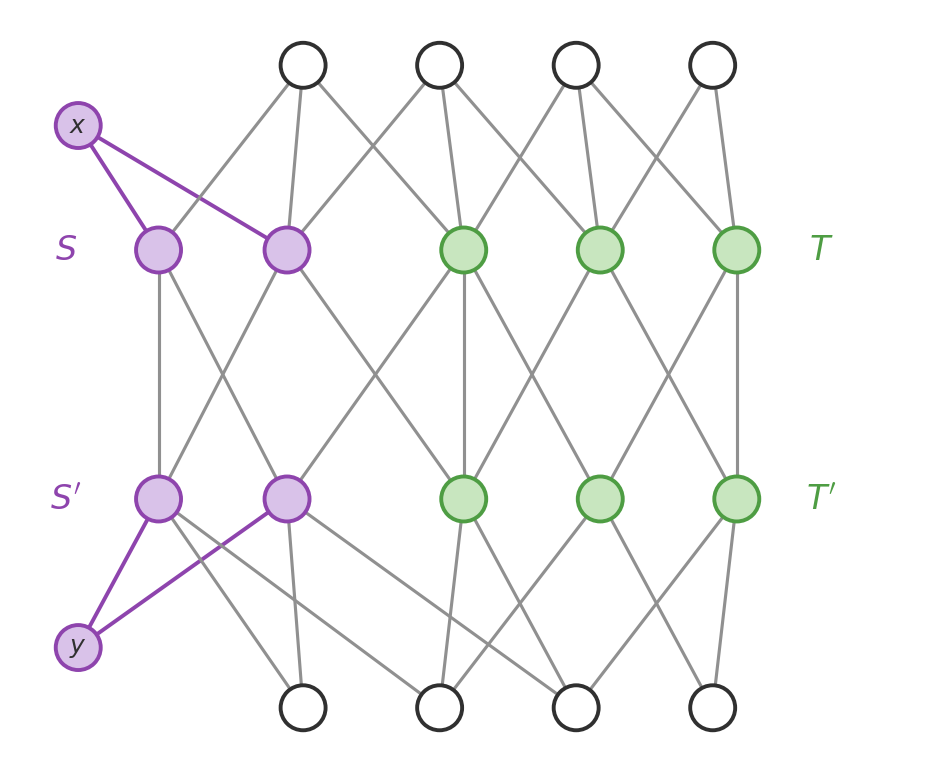}
\end{center}

In the above picture the purple edges are auxiliary, representing the nonzero matrix coefficients for the inclusion and projection maps between the $L(1)$ component and the full representation.  An element in the poset having such an edge exactly corresponds to it being $L(1)$-saturated or $L(1)$-cosaturated.

Now we proceed exactly as in Theorem \ref{multiplicityone}, using Lemma \ref{matrixlemma} to show that we can find a matching where at least one $a \in S$ is matched with $b \in S'$.  Then by Lemma \ref{rank4complements}, if $\{a\}$ is $L(1)$-saturated, then $P_1 - \{a\}$ is $L(3)$-cosaturated. Similarly, if $\{a\}$ is $L(1)$-cosaturated then $P_1 - \{a\}$ is $L(3)$-saturated.  In either case $P_1 - \{a\}$ has a matching with $P_3$, and $P_{-1} - \{b\}$ has a matching with $P_{-3}$. Therefore the matching on the middle layer from Lemma \ref{rank4complements} can be extended to an SCD.
\end{proof}

\subsubsection{Fractional SCDs for Rank 4 Posets} Now we will turn our attention to rank $4$ posets with arbitrary rank vectors, but relax things to consider fractional symmetric chain decompositions. We have the following theorem: 

\begin{theorem}\label{fractionalthm}
If $P$ is a rank $4$ poset admitting a strong $\mathfrak{sl}_2$-action then $P$ admits a fractional symmetric chain decomposition. 
\end{theorem}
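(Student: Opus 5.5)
The plan is to reduce the existence of a fractional SCD on a rank $4$ poset to a pair of transportation problems, one per layer, and then to produce feasible data for them from the linear algebra of the action, using Lemmas \ref{saturated}, \ref{cosaturated} and \ref{rank4complements}.

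\textbf{Step 1: flow reformulation.} Write $\mathbb{C}P \cong L(3)^{m} \oplus L(1)^{a}$, so that $|P_{\pm 3}| = m$ and $|P_{\pm 1}| = m+a$. A fractional SCD is the same thing as a fractional solution of the two-commodity flow problem from Section \ref{flowpolytope}. In rank $4$ this problem decouples layer by layer, and I will check that it is equivalent to the following data.
\begin{enumerate}
\item A fractional matching $x$ on the top layer that saturates $P_3$. Its marginal on $P_1$ is a vector $y \in [0,1]^{P_1}$ with $\sum y = m$.
\item Similarly, a fractional matching $x'$ on the bottom layer saturating $P_{-3}$, with marginal $y'$ on $P_{-1}$.
\item On the middle layer, a transportation plan $z_3$ from $y$ to $y'$ and a transportation plan $z_1$ from $\mathbf{1}-y$ to $\mathbf{1}-y'$, both supported on cover edges.
\end{enumerate}
Vertex capacities are then automatically met with equality. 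Path-decomposing the resulting flow exactly as in the proof of the flow/fractional SCD proposition gives the weighted chains. So it suffices to find $y$ and $y'$ with four properties: $y$ lies in the matching polytope image of the top layer, $y'$ lies in that of the bottom layer, and the two middle transportation problems are both feasible.

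\textbf{Step 2: candidate marginals from saturated sets.} Take $y$ in the convex hull of indicator vectors of $L(3)$-saturated (or $L(3)$-cosaturated) subsets of $P_1$, and take $y'$ analogously in $P_{-1}$. By Lemma \ref{saturated} (respectively \ref{cosaturated}), every such set is matched into $P_3$ along edges with nonzero $E$- (respectively $F$-) coefficients. Hence any convex combination satisfies the first two conditions of Step 1.

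By Lemma \ref{rank4complements}, the complement of an $L(3)$-saturated set is $L(1)$-cosaturated. So $\mathbf{1}-y$ is correspondingly a convex combination of indicators of $L(1)$-cosaturated sets. Lemma \ref{cosaturated}(b) then gives $E$-matchings of those complements across the middle layer to $L(1)$-cosaturated sets in $P_{-1}$. The $L(3)$ part gets its middle-layer matchings from Lemma \ref{saturated}(b).

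\textbf{Step 3: the compatibility problem (main obstacle).} The difficulty is that the matchings for the two commodities are produced independently. The $L(3)$-saturated target reached from $S$ need not be the complement of the $L(1)$-cosaturated target reached from $S^c$. If a single choice made them complementary, we would get a genuine SCD, which is the full conjecture. The point of averaging is that only the marginals must agree.

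So I will apply Farkas' lemma / LP duality to the combined system, with the mixing weights on saturated sets in $P_1$ and $P_{-1}$ as unknowns, together with the two transportation problems. A dual certificate then takes the form of Hall-type violating sets $A \subseteq P_1$ for the two commodities. It says that for every admissible $y$, either some $A$ receives more $y$-mass than its middle-layer neighbourhood can absorb of $y'$, or the same happens for $\mathbf{1}-y$.

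The key step is to rule out such a certificate using the action. Fix a violating set $A$ with neighbourhood $N(A) \subseteq P_{-1}$. First pick $y$ and $y'$ extremal for $A$: concretely, choose saturated sets maximizing $|S \cap A|$ and minimizing $|S' \cap N(A)|$. These extremal choices are computed as ranks of the submatrices of $\pi_3$ restricted to $\operatorname{span}(A)$ and to $\operatorname{span}(N(A))$. Second, apply the Cauchy--Binet / Lindstr\"om--Gessel--Viennot argument to $E$ restricted between $\operatorname{span}(N(A))$ and $\operatorname{span}(A)$. Because $E$ commutes with $\pi_3$ and $\pi_1$, the resulting rank inequalities for the $L(3)$ and $L(1)$ components add up to contradict the violation, in the same way as the rank count $\operatorname{rank}(E_{T,S'}) < |S'|$ in Theorem \ref{multiplicityone}. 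If this direct rank count does not close the argument, I would instead try to take the mixing weights explicitly proportional to products of squared absolute values of complementary minors (of the projection, of $E$, and of the inclusion). This aims to get nonnegative weights whose marginals match automatically by a Laplace-expansion identity.
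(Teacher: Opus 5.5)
Your Steps 1 and 2 are sound. The rank-$4$ flow problem does reduce to finding marginals $y$ on $P_1$ and $y'$ on $P_{-1}$ for which both middle-layer transportation problems are feasible. Convex combinations of indicators of $L(3)$-saturated sets do take care of the outer layers.

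\textbf{The gap is Step 3.} You correctly identify it as the entire content of the theorem, but it is not carried out.
\begin{enumerate}
\item The Farkas certificate for your coupled system is not a single violating set $A$. It is a nonnegative combination
\[
\sum_A \lambda_A\bigl[y(A)-y'(N(A))\bigr]+\sum_B \mu_B\bigl[(\mathbf{1}-y)(B)-(\mathbf{1}-y')(N(B))\bigr]
\]
that is strictly positive on the whole product of the two polytopes. It mixes Hall constraints of both commodities over different sets.
\item You then fix one $A$, so the coupling through $y \leftrightarrow \mathbf{1}-y$, which is the whole difficulty, is never confronted.
\item A rank count of the kind you describe, using equivariance of $\pi_3$, can at best control one commodity at a time. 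Each commodity on its own is already feasible by Step 2. Nothing shows that the $L(3)$ and $L(1)$ inequalities ``add up'' to a contradiction when $A\neq B$ or when the weights differ.
\item Your extremal choices point the wrong way. To refute even a single-set certificate you would need $S$ minimizing $|S\cap A|$ and $S'$ maximizing $|S'\cap N(A)|$.
\item The determinantal-weights fallback is speculative. No identity is given that would make the marginals match.
\end{enumerate}

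\textbf{The missing idea} is an iteration that avoids duality entirely.
\begin{enumerate}
\item Start from an $L(3)$-saturated $S_0\subseteq P_1$. Use Lemma \ref{saturated}(b) to get an $L(3)$-saturated $S_0'\subseteq P_{-1}$, and extend to full length-$4$ chains.
\item Route the $L(1)$ commodity \emph{upward}, starting from exactly $T_0=P_{-1}\setminus S_0'$. This set is $L(1)$-cosaturated by Lemma \ref{rank4complements}. Lemma \ref{cosaturated}(a) then gives an $L(1)$-cosaturated $T_0'\subseteq P_1$. With this choice, rank $-1$ is covered exactly once per step automatically.
\item Set $S_1=P_1\setminus T_0'$, which is again $L(3)$-saturated, and repeat.
\item There are finitely many $L(3)$-saturated sets, so eventually $P_1\setminus T_n'=S_m$ for some $m\le n$.
\item Average the chains from steps $m$ through $n$. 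The rank-$1$ marginals telescope:
\[
\sum_{a=m}^{n}\bigl(\mathbf{1}_{S_a}+\mathbf{1}_{T'_a}\bigr)=(n-m+1)\mathbf{1}.
\]
\end{enumerate}
In your language, this gives $y=\frac{1}{n-m+1}\sum_a \mathbf{1}_{S_a}$ and $y'=\frac{1}{n-m+1}\sum_a\mathbf{1}_{S'_a}$. Both transportation plans are averaged matchings. These are exactly the compatible marginals your Step 3 was trying to produce.
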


\begin{proof} Let $P$ be a rank $4$ poset with a strong $\mathfrak{sl}_2$-action. For this argument we will think of a fractional symmetric chain decomposition as a collection of symmetric saturated chains covering every vertex of $P$ the same number of times.

Choose a $L(3)$-saturated set $S_0 \subseteq P_1$. By Lemma \ref{saturated}, we can find a $L(3)$-saturated set $S_0' \subseteq P_{-1}$ it matches with and we can extend that matching to a collection of disjoint length $4$ chains.  Set $T_0 = S_0'^c \subseteq P_{-1}$. By Lemma \ref{rank4complements} $T_0$ is $L(1)$-cosaturated,  so therefore we can match it with a $L(1)$-cosaturated set $T_0' \subseteq P_1$ by Lemma \ref{cosaturated}.

\begin{center}
\includegraphics[scale=0.4]{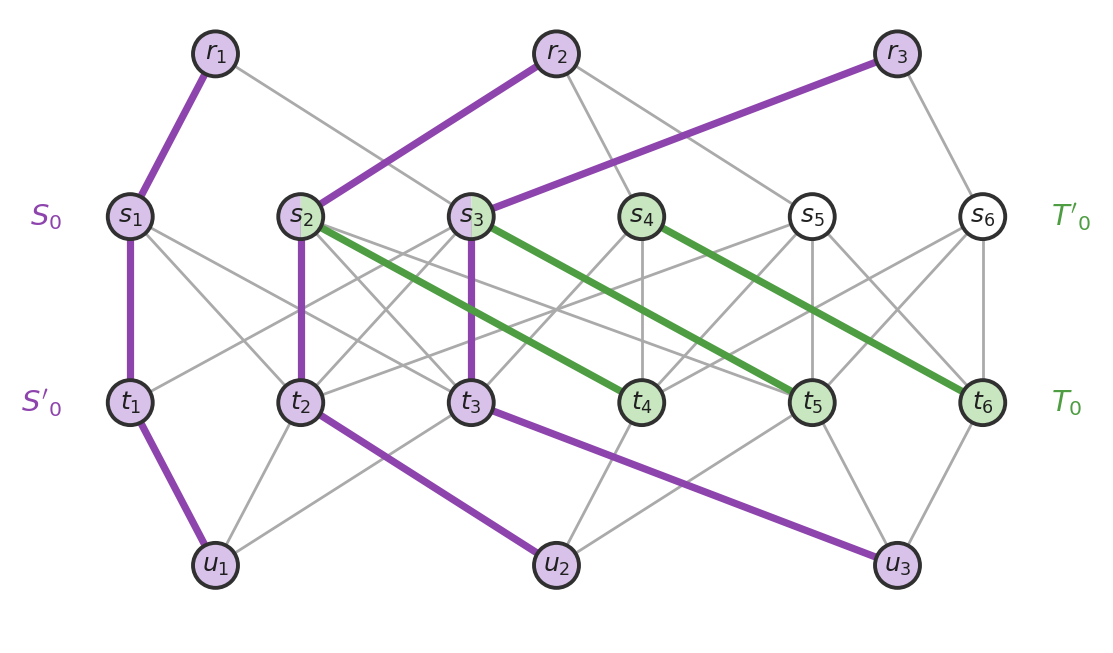}
\end{center}

 Notice that if $T_0' = S_0^c$ then we have found a symmetric chain decomposition.  If $T_0' \ne S_0^c$ we let $S_1 = T_0'^c$, which is $L(3)$-saturated by Lemma  \ref{rank4complements}, and we repeat the process. For each $n \ge 0$, we get  $S_n \subseteq P_1$ and $S_n' \subseteq P_{-1}$ which are $L(3)$-saturated sets with a matching between them, and  $T_n = S_n'^c \subseteq P_{-1}$  and $T_n' \subseteq P_1$ are $L(1)$-cosaturated sets with a matching between them, with $S_{n+1} = T_n'^c$.

\begin{center}
\includegraphics[scale=0.22]{alternating-proof-step-0-no-header.png}
\includegraphics[scale=0.22]{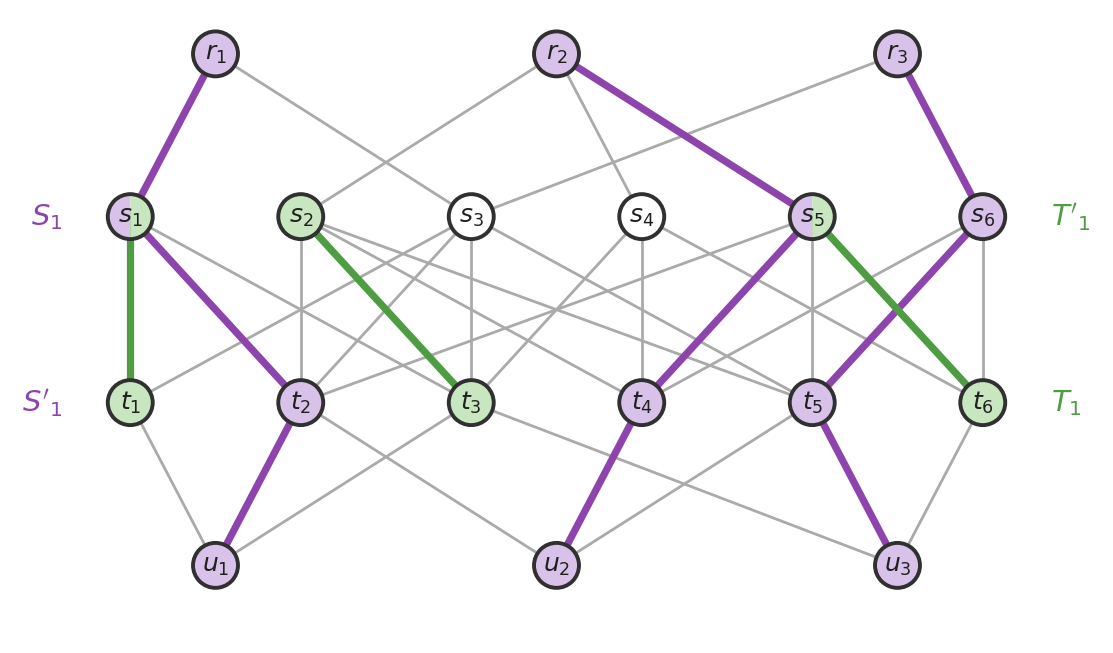}
\includegraphics[scale=0.22]{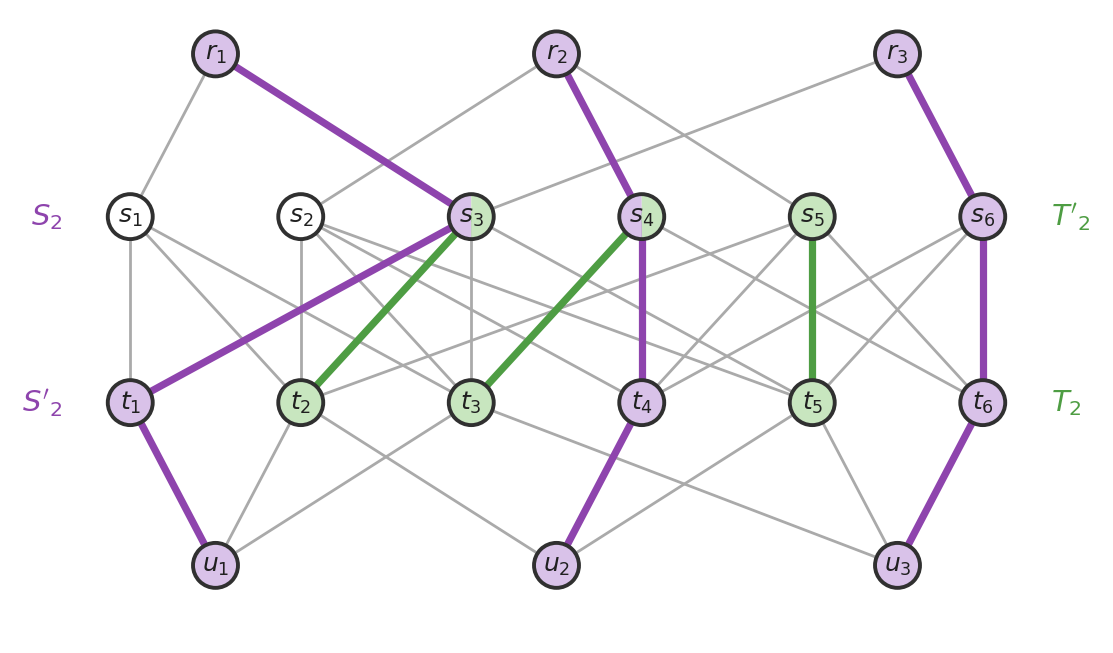}
\includegraphics[scale=0.22]{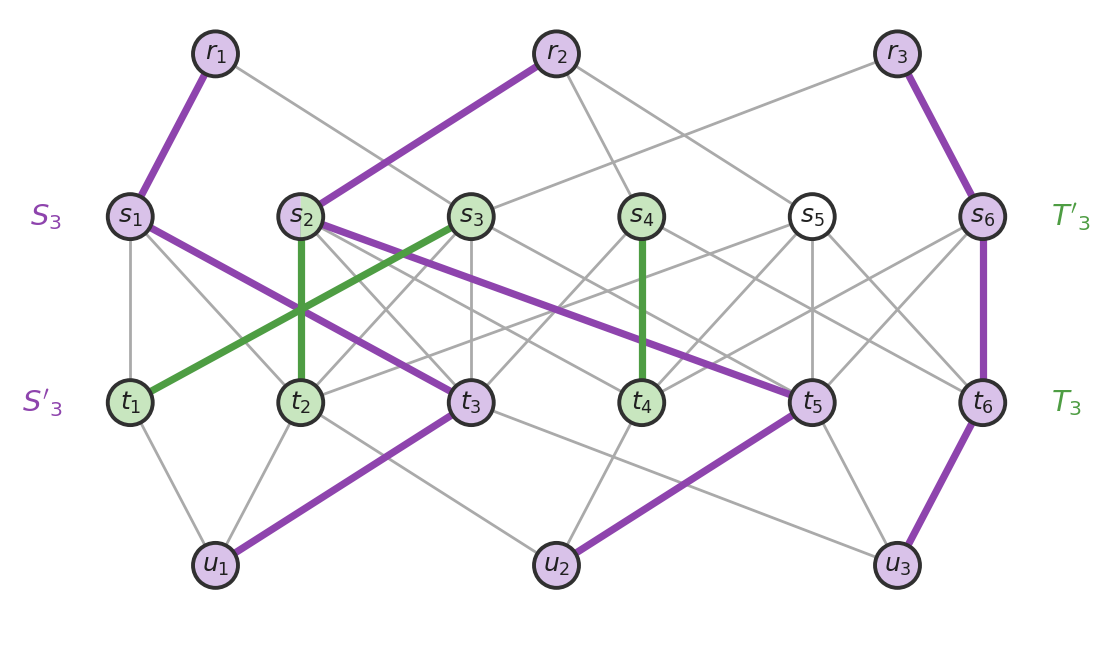}
\end{center}

There are only finitely many options for the $S_n$'s so at some point this process must repeat.  Suppose $T_n'^c = S_m$ for some $m < n$. We claim that all of the chains for steps $m$ through $n$ form a fractional symmetric chain decomposition. It is clear from the construction that each vertex in ranks $3$, $-1$ and $-3$ is covered once in each step.  In rank 1 for $a$ between $m$ and $n-1$  $T'_a$ and $S_{a+1}$ together cover all of the vertices once,  and then $T'_n$ and $S_m$ cover them one final time. 

In the example above we stopped after $4$ steps since $T_3'^c = S_1$. One can easily verify that the chains in the last 3 pictures collectively cover each vertex exactly $3$ times.

\end{proof}

\subsection{More questions and special cases:}   Of course a full solution to Conjecture \ref{theconjecture} would be great, but there are many interesting special cases and variations one could consider beyond those we just did.   Moreover we think many of those directions would still be interesting even if the full conjecture ends up being false.  Here are a few we'd like to highlight:

\medskip

\noindent \textbf{Open Problem:}  \emph{For which rank vectors does Conjecture \ref{theconjecture} hold?}

\smallskip

Of course we think the answer is all of them, but there is lots of room for partial progress.  We gave a few infinite families in this section, and have checked it computationally for rank vectors with fewer than $14$ elements.  All rank $4$ posets or width $3$ posets might be natural candidates for the next families to look at. 

\medskip

\noindent \textbf{Open Problem:}  \emph{Do rank 5 posets always admit fractional SCDs?}

\smallskip

The rank $4$ argument breaks down in rank 5. It still produces a family of chains covering every element outside of the middle rank the same number of times. However its possible that one of the middle elements gets used too many times to to be able turn that collection into a fractional SCD.  Perhaps one can prevent this from happening though with a more careful argument.

\medskip

\noindent  \textbf{Open Problem:}  \emph{Do posets with positive $\mathfrak{sl}_2$-actions always admit (fractional) SCDs?}

\smallskip

The Young lattices $L(m,n)$ have the additional property that all of the $E$ and $F$ coefficients can be taken to be non-negative real numbers. We call these \emph{positive actions}.  These greatly restrict the combinatorics of the poset by preventing certain terms from cancelling in the $\mathfrak{sl}_2$ relations, and seem to be quite rare.

\end{section}

\begin{section}{Computational Experiments}\label{experiments}

Our main computational experiment can be summarized as follows: 

\begin{enumerate}

\item Generate either a random Peck poset or a random SCD-critical Peck poset.

\item Test whether it admits an SCD or a fractional SCD.

\item If it does not admit an SCD try to prove it does not admit a strong $\mathfrak{sl}_2$-action. 

\item If it does not admit an SCD but we fail to prove it cannot admit a strong $\mathfrak{sl}_2$-action, search for a strong action numerically.

\item Repeat.

\end{enumerate}

We'll briefly explain how we did these steps, and explain certain choices we made in the process, and then we will report the outcome of these experiments.

\subsection{Generating random Peck posets}

Recall the characterization of Peck posets due to Griggs (Definition/Theorem \ref{Peckdef}):  An RSU poset is Peck if and only if between any pair of opposite ranks there exists a family of disjoint saturated chains connecting them.  This allows one to construct random Peck posets via the following procedure:

\begin{enumerate}
\item Fix a symmetric and unimodal rank vector, and put the appropriate number of elements in each rank.

\begin{center}
\includegraphics[scale=0.25]{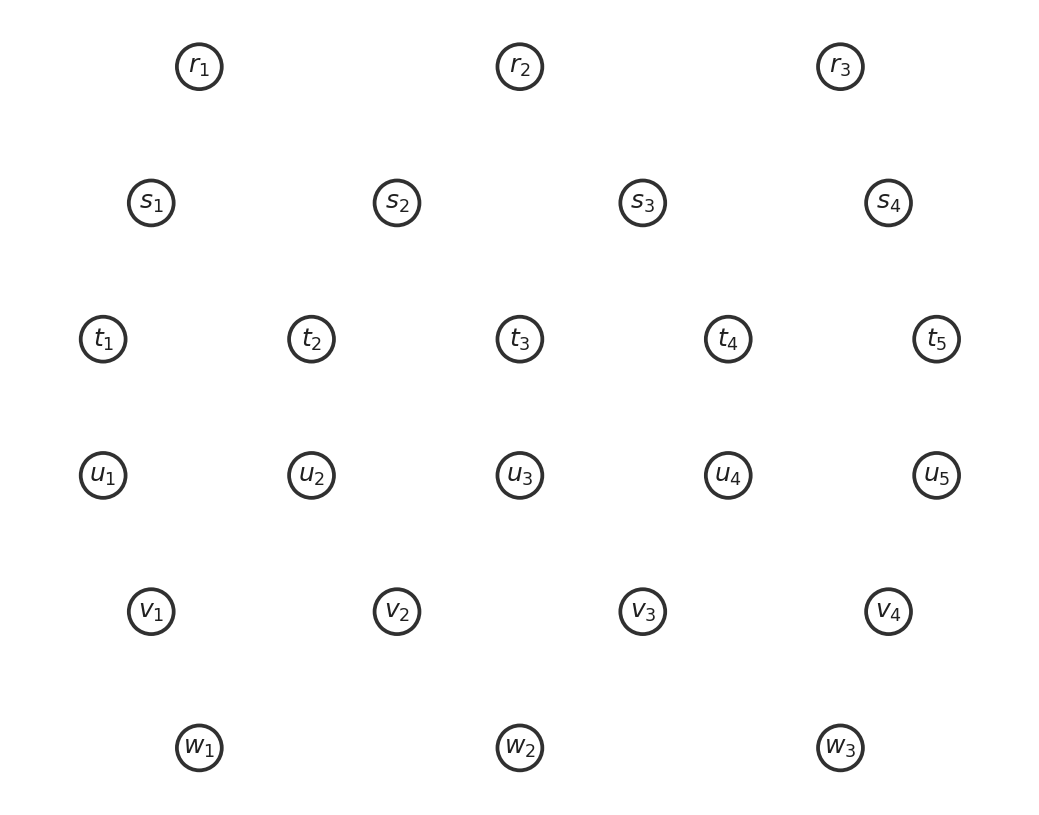}
\end{center}

\item For each pair of opposite ranks randomly select a collection of disjoint saturated chains connecting them, and add those cover relations to the poset. 

\begin{center}
\includegraphics[scale=0.25]{random-peck-construction-2-chain-families.png}
\end{center}

\item Between each pair of adjacent ranks add some additional cover relations with some probability. 

\begin{center}
\includegraphics[scale=0.25]{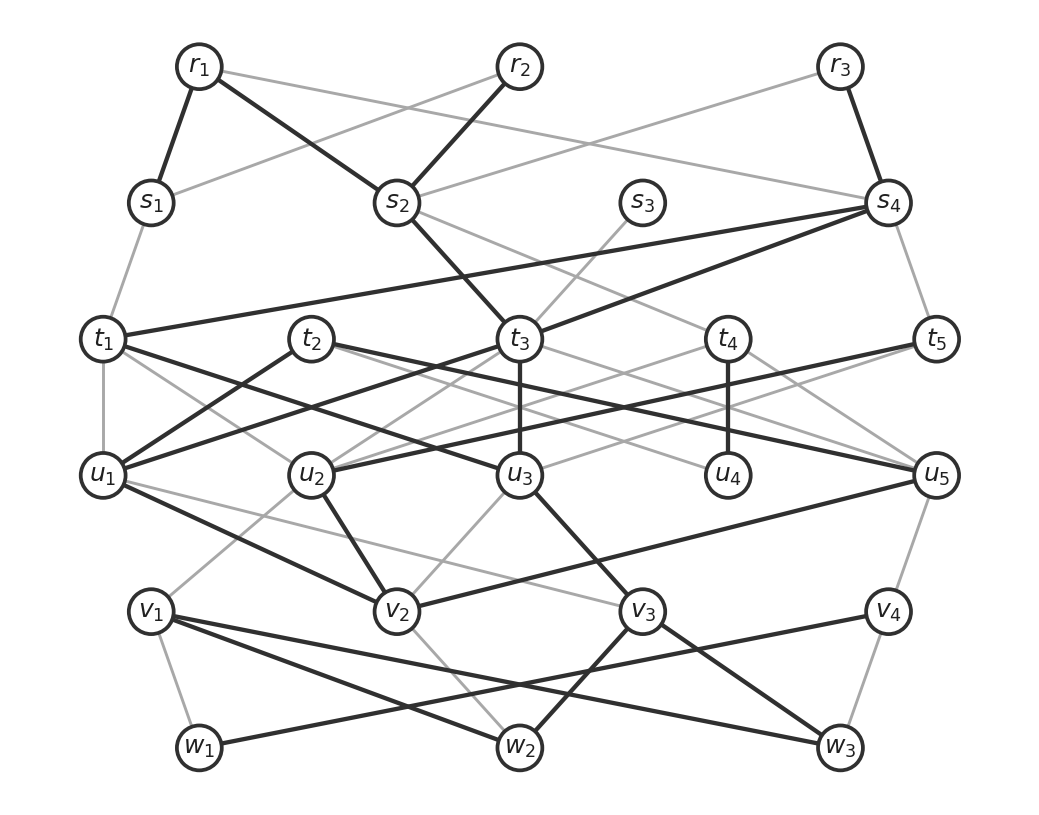}
\end{center}

\end{enumerate}

We'll note that this procedure does not sample from the set of Peck posets with given edge densities uniformly.  A Peck poset occurs with probability roughly proportional to the number of families of symmetric saturated chains satisfying the Griggs condition (the number of ways we could have gotten it from this process).

We also tried a procedure that samples uniformly, and it was far less computationally efficient. Moreover, Peck posets with very few Griggs witness families tend to not admit SCDs, fractional SCDs, or strong $\mathfrak{sl}_2$-actions very often, and the bottlenecks/obstructions tend to be very simple.  The procedure described above seems to produce many more interesting examples which can fail to admit these structures for more subtle reasons.

\subsection{Testing for (fractional) SCDs} \label{SCDtesting} Once we have a Peck poset the first thing we want to do is check whether or not it admits an SCD.  We experimented with 3 ways to do this:

\medskip

\noindent \textbf{Method 1:} Enumerate all symmetric saturated chains and perform a brute force search.  

\medskip

\noindent \textbf{Method 2:}  First run a program to find certain easy deductions like ``$x$ must be the start of its chain" or ``the chain through $y$ must also go through $z$", then use these to trim the poset before performing a brute force search as in method 1. 

\medskip

\noindent \textbf{Method 3:} Use built-in integer programming tools in Python to search for integer points in the flow polytope (see Section \ref{flowpolytope}). 

\medskip

Method 1 was the most efficient for relatively small rank vectors or low edge densities.  Method 2 was best for certain middle-complexity posets. Once the posets were sufficiently large and complicated, method 3 was the most efficient.  Another advantage of method 3 is that it also tells us whether the poset admits a fractional SCD essentially for free since the integer program solvers typically solve the linear program along the way. 

\subsection{Generating random SCD-critical posets}

Starting with a Peck poset which does not admit an SCD one can add in edges at random one at a time,  verifying each time that the added edge does not create an SCD.  Once you cannot add any more edges, the resulting poset is an SCD-critical Peck poset. 

For small enough rank vectors this is reasonably computationally tractable (in fact for very small rank vectors there were few enough SCD-critical Peck posets that we could just enumerate them all).  However for larger rank vectors this becomes computationally intensive because one needs to keep checking whether the poset admits an SCD, so we were able to test far fewer of these cases for larger rank vectors.  

\subsection{The strong $\mathfrak{sl}_2$-action autodisprover}  Given a Peck poset which we have already verified does not admit an SCD,  we then want to prove it also does not admit a strong $\mathfrak{sl}_2$-action.  To do this we wrote a Python program to automatically generate proofs that a given poset does not admit a $\mathfrak{sl}_2$-action. 

This is in some sense the most interesting thing we built for the experiment, but we've essentially already explained how it works: Section \ref{combinatorics} lists a series of deductions, implications, reductions, and contradictions one can make when trying to prove a given poset does not have a strong $\mathfrak{sl}_2$-action.   The program just iteratively applies these rules until it either finds a contradiction or gets stuck. 

Importantly though we'll note that if the autodisprover does not find a proof, that does not mean the poset admits a strong $\mathfrak{sl}_2$-action.  Indeed, we analyzed many posets that earlier versions of the autodiprover failed to catch. Developing proofs for those examples is how we came up with many of the techniques described in Section \ref{combinatorics}.  The failure rate of the autodisprover is mostly a reflection of where we decided to stop this process rather than some inherent limit on these methods.

\subsection{Searching for numerical solutions}
Whenever our disprover fails we have a candidate counterexample to the conjecture. We then want to just see if we can construct a strong $\mathfrak{sl}_2$ action, by directly solving the equations involved.  For small enough posets we could use Gr\"obner bases to get a definitive answer one way or another,  however for larger posets that became computationally infeasible.  Instead here's what we did: 

 If we fix the raising operator $E$, finding the corresponding $F$ operator satisfying the $\mathfrak{sl}_2$-relations is just a linear system of equations.  However the solution will typically not be supported along the edges of the poset,  so instead we force it to be supported on the poset and find a least squares solution to the $\mathfrak{sl}_2$-relations. Once we have that $F$ we flip things around and find a new $E$ that is supported on the edges we want and is a least squares solution to the $\mathfrak{sl}_2$-relations. We repeat this process, alternating solving for $E$ and $F$, getting operators supported on our edges which are closer to satisfying the $\mathfrak{sl}_2$-relations with every step.  
 
  This is by no means guaranteed to converge to a solution, and indeed often it get's stuck at local minima even when we know solutions exist.  However in practice, with enough starting seeds and a bit of random noise added in,  it seems to do pretty well at finding solutions when they exist.

\subsection{Choosing rank sizes and densities}

We wanted to test the conjecture on a variety of posets of different shapes and edge densities.  Here's a bit more on how we decided what to test:

\begin{itemize}

\item The number of samples tested is a function of time.  For each rank  (rank $4$ to rank $10$) we ran one $\sim\hspace{-.1cm}8$ hour sweep;  $\sim\hspace{-.1cm}6$ hours constructing and testing random posets, and $\sim\hspace{-.1cm}2$ hours constructing and testing SCD-critical posets.  In each rank we cycled through the rank vectors being tested so they each got roughly the same number of samples. This did mean that the larger rank vectors tended to use up most of the time.  Had we instead given the rank vectors equal time we would have tested far more small cases, but fewer large ones.  Since larger rank vectors tended to produce harder more interesting examples, we prioritized those. 

\smallskip

\item The rank vectors tested were chosen by what seemed computationally feasible in test runs, as well as a desire to test posets of a variety of shapes (e.g. thin at the extremes and wide in the middle, or relatively boxy with almost equal rank sizes). 

\smallskip

\item We skipped any cases covered by results in Section \ref{specialcases}.

\smallskip

\item We skipped all cases with fewer than $14$ elements. \textbf{Bonus Theorem:} \emph{Conjecture \ref{theconjecture} holds for ranked posets with fewer than $14$ elements.}  During the process of building and testing the autodisprover we completely settled these cases by enumerating all SCD-critical posets with fewer then $14$ elements which are not covered by the results in Section \ref{specialcases} and checking they don't admit a fractional SCD.  At $14$ elements the enumeration process became too computationally intensive for some rank vectors, which is why we stopped there.

\smallskip

\item For each rank vector we ran a quick sweep of different edge densities to get an estimate of how often they produce posets with SCDs vs without. We then chose three density profiles. The sparse profile was not so sparse that SCDs never appeared. The medium profile produced a mixture of SCD and non-SCD examples. The dense profile was not so dense that every example admitted an SCD.  These different regimes tended to produce examples that feel qualitatively different, but the totals reported below are these 3 combined. 

\smallskip

\item For each chosen rank vector we also tested SCD-critical posets, which tend to be much more dense than the ``dense" random regime but extremely unlikely to show up in random searches.  These are much more computationally intensive to both construct and test, so even though we dedicated a quarter of the total run time to these, they make up a far smaller portion of the total.

\smallskip

\end{itemize}

Our main interest was in providing evidence for, or finding a counterexample to, Conjecture \ref{theconjecture}, although we did also decide to track how often the posets had fractional SCDs but not ordinary SCDs. There are many other interesting questions one could ask about the distributions of Peck posets produced this way -- including a more detailed analysis of how the density affects the probability the poset will have an SCD.

\subsection{Experimental Results:}

Below are tables summarizing the results of this computational experiment.  The totals in the random poset columns are combined across the sparse, medium, and dense bands.  The ``Fractional" columns record how many posets we found which admit a fractional SCD but not an ordinary SCD.  The ``Open" columns record the number of posets where the autodisprover was not able to certify that a strong $\mathfrak{sl}_2$-action does not exist.   For every open case, we also conducted a numerical search for a solution. None of these searches found one.

\vfill

\begin{center}
{\large\bfseries Rank 4\par}
\vspace{0.5em}

\small
\setlength{\tabcolsep}{4.5pt}
\begin{tabular}{lrrrrrrr}
\toprule
& \multicolumn{4}{c}{Random posets}
& \multicolumn{3}{c}{SCD-critical posets} \\
\cmidrule(lr){2-5}
\cmidrule(lr){6-8}
Rank vector
& Total
& Non-SCD
& Fractional
& Open
& Tested
& Fractional
& Open \\
\midrule
$(2,5,5,2)$      & 457,722 & 186,592 &  2 & 0 & 49 & 0 & 0 \\
$(2,9,9,2)$      & 457,722 & 177,861 & 12 & 0 & 49 & 0 & 0 \\
$(3,5,5,3)$      & 457,722 & 182,887 &  2 & 0 & 49 & 0 & 0 \\
$(3,8,8,3)$      & 457,722 & 180,970 & 16 & 0 & 49 & 0 & 0 \\
$(3,10,10,3)$    & 457,722 & 177,540 & 12 & 0 & 49 & 0 & 0 \\
$(4,7,7,4)$      & 457,722 & 176,066 &  3 & 0 & 49 & 0 & 0 \\
$(4,11,11,4)$    & 457,721 & 187,226 & 21 & 0 & 49 & 1 & 1 \\
$(6,8,8,6)$      & 457,719 & 177,697 &  1 & 0 & 49 & 0 & 0 \\
$(6,11,11,6)$    & 457,719 & 179,812 & 18 & 0 & 49 & 0 & 0 \\
$(6,13,13,6)$    & 457,719 & 174,528 & 24 & 0 & 49 & 1 & 1 \\
$(8,10,10,8)$    & 457,719 & 175,549 &  3 & 0 & 48 & 0 & 0 \\
$(10,15,15,10)$  & 457,719 & 186,703 &  5 & 0 & 48 & 0 & 0 \\
\midrule
\textbf{Total}
& \textbf{5,492,648}
& \textbf{2,163,431}
& \textbf{119}
& \textbf{0}
& \textbf{586}
& \textbf{2}
& \textbf{2} \\
\bottomrule
\end{tabular}

\end{center}

\vfill

\begin{center}
{\large\bfseries Rank 5\par}
\vspace{0.5em}

\small
\setlength{\tabcolsep}{4.5pt}
\begin{tabular}{lrrrrrrr}
\toprule
& \multicolumn{4}{c}{Random posets}
& \multicolumn{3}{c}{SCD-critical posets} \\
\cmidrule(lr){2-5}
\cmidrule(lr){6-8}
Rank vector
& Total
& Non-SCD
& Fractional
& Open
& Tested
& Fractional
& Open \\
\midrule
$(1,4,4,4,1)$ & 556,761 & 240,016 & 0  & 0 & 418 & 0 & 0 \\
$(1,5,7,5,1)$ & 556,761 & 263,012 & 7  & 0 & 418 & 0 & 0 \\
$(2,4,7,4,2)$ & 556,761 & 255,389 & 10 & 0 & 418 & 0 & 0 \\
$(2,5,5,5,2)$ & 556,761 & 253,323 & 17 & 0 & 418 & 2 & 0 \\
$(2,6,8,6,2)$ & 556,761 & 247,867 & 37 & 0 & 418 & 0 & 17 \\
$(3,4,7,4,3)$ & 556,758 & 230,141 & 7  & 0 & 418 & 0 & 0 \\
$(3,5,6,5,3)$ & 556,758 & 277,833 & 18 & 0 & 418 & 0 & 1 \\
$(3,6,8,6,3)$ & 556,758 & 242,923 & 32 & 0 & 418 & 3 & 10 \\
$(3,7,7,7,3)$ & 556,758 & 290,867 & 38 & 0 & 418 & 3 & 19 \\
$(4,5,5,5,4)$ & 556,758 & 249,211 & 0  & 0 & 418 & 0 & 0 \\
$(4,6,8,6,4)$ & 556,758 & 269,685 & 38 & 0 & 417 & 1 & 3 \\
$(4,7,8,7,4)$ & 556,758 & 263,773 & 48 & 0 & 417 & 3 & 18 \\
\midrule
\textbf{Total}
& \textbf{6,681,111}
& \textbf{3,084,040}
& \textbf{252}
& \textbf{0}
& \textbf{5,014}
& \textbf{12}
& \textbf{68} \\
\bottomrule
\end{tabular}

\end{center}

\medskip

\begin{center}
{\large\bfseries Rank 6\par}
\vspace{0.5em}

{\small
\setlength{\tabcolsep}{4.5pt}
\begin{tabular}{lrrrrrrr}
\toprule
& \multicolumn{4}{c}{Random posets}
& \multicolumn{3}{c}{SCD-critical posets} \\
\cmidrule(lr){2-5}
\cmidrule(lr){6-8}
Rank vector
& Total
& Non-SCD
& Fractional
& Open
& Tested
& Fractional
& Open \\
\midrule
$(1,2,4,4,2,1)$ & 310,740 & 146,225 & 0  & 0 & 172 & 1 & 0 \\
$(1,3,6,6,3,1)$ & 310,740 & 158,245 & 9  & 0 & 172 & 1 & 1 \\
$(1,4,5,5,4,1)$ & 310,740 & 146,348 & 14 & 0 & 172 & 0 & 0 \\
$(1,5,7,7,5,1)$ & 310,740 & 141,901 & 13 & 0 & 172 & 0 & 3 \\
$(2,3,4,4,3,2)$ & 310,740 & 142,896 & 3  & 0 & 172 & 0 & 0 \\
$(2,4,6,6,4,2)$ & 310,740 & 142,227 & 23 & 0 & 172 & 1 & 1 \\
$(2,5,5,5,5,2)$ & 310,740 & 146,453 & 29 & 0 & 172 & 2 & 0 \\
$(3,4,5,5,4,3)$ & 310,740 & 143,640 & 3  & 0 & 172 & 0 & 1 \\
$(3,5,5,5,5,3)$ & 310,740 & 150,871 & 19 & 0 & 172 & 0 & 2 \\
$(3,5,7,7,5,3)$ & 310,740 & 139,177 & 55 & 0 & 172 & 0 & 3 \\
$(3,6,9,9,6,3)$ & 310,739 & 124,519 & 87 & 0 & 172 & 3 & 25 \\
$(4,6,9,9,6,4)$ & 310,737 & 145,872 & 62 & 0 & 171 & 2 & 9 \\
\midrule
\textbf{Total}
& \textbf{3,728,876}
& \textbf{1,728,374}
& \textbf{317}
& \textbf{0}
& \textbf{2,063}
& \textbf{10}
& \textbf{45} \\
\bottomrule
\end{tabular}
}

\end{center}

\medskip


\begin{center}
{\large\bfseries Rank 7\par}
\vspace{0.5em}

{\small
\setlength{\tabcolsep}{4.5pt}
\begin{tabular}{lrrrrrrr}
\toprule
& \multicolumn{4}{c}{Random posets}
& \multicolumn{3}{c}{SCD-critical posets} \\
\cmidrule(lr){2-5}
\cmidrule(lr){6-8}
Rank vector
& Total
& Non-SCD
& Fractional
& Open
& Tested
& Fractional
& Open \\
\midrule
$(1,2,3,4,3,2,1)$     & 51,357 & 21,779 &  2 & 0 & 18 & 0 & 0 \\
$(1,2,5,6,5,2,1)$     & 51,357 & 22,847 &  0 & 0 & 18 & 0 & 1 \\
$(1,4,4,7,4,4,1)$     & 51,357 & 17,054 &  4 & 0 & 18 & 0 & 0 \\
$(1,5,7,9,7,5,1)$     & 51,357 & 22,415 &  6 & 0 & 18 & 0 & 2 \\
$(2,3,4,5,4,3,2)$     & 51,357 & 22,681 &  1 & 0 & 18 & 0 & 0 \\
$(2,4,6,6,6,4,2)$     & 51,357 & 25,942 & 10 & 0 & 18 & 0 & 0 \\
$(2,6,8,9,8,6,2)$     & 51,357 & 23,971 & 36 & 0 & 18 & 1 & 6 \\
$(3,4,6,8,6,4,3)$     & 51,357 & 23,222 & 10 & 0 & 18 & 0 & 1 \\
$(3,6,6,9,6,6,3)$     & 51,357 & 22,766 & 16 & 0 & 18 & 0 & 2 \\
$(3,7,10,11,10,7,3)$  & 51,357 & 20,568 & 73 & 0 & 18 & 1 & 6 \\
$(4,5,7,7,7,5,4)$     & 51,357 & 22,107 &  2 & 0 & 18 & 0 & 0 \\
$(4,7,9,11,9,7,4)$    & 51,356 & 22,607 & 39 & 0 & 18 & 2 & 7 \\
\midrule
\textbf{Total}
& \textbf{616,283}
& \textbf{267,959}
& \textbf{199}
& \textbf{0}
& \textbf{216}
& \textbf{4}
& \textbf{25} \\
\bottomrule
\end{tabular}
}

\end{center}

\medskip


\begin{center}
{\large\bfseries Rank 8\par}
\vspace{0.5em}

{\small
\setlength{\tabcolsep}{4.5pt}
\begin{tabular}{lrrrrrrr}
\toprule
& \multicolumn{4}{c}{Random posets}
& \multicolumn{3}{c}{SCD-critical posets} \\
\cmidrule(lr){2-5}
\cmidrule(lr){6-8}
Rank vector
& Total
& Non-SCD
& Fractional
& Open
& Tested
& Fractional
& Open \\
\midrule
$(1,2,3,4,4,3,2,1)$       & 29,496 & 11,731 &  1 & 0 & 8 & 0 & 0 \\
$(1,2,5,6,6,5,2,1)$       & 29,496 & 15,077 &  3 & 0 & 8 & 0 & 1 \\
$(1,4,4,7,7,4,4,1)$       & 29,496 & 12,665 & 11 & 0 & 8 & 0 & 0 \\
$(1,5,7,9,9,7,5,1)$       & 29,496 & 15,019 & 10 & 0 & 8 & 0 & 0 \\
$(2,3,4,5,5,4,3,2)$       & 29,496 & 11,267 &  2 & 0 & 8 & 0 & 0 \\
$(2,4,6,6,6,6,4,2)$       & 29,496 & 14,549 & 11 & 0 & 8 & 0 & 1 \\
$(2,6,8,9,9,8,6,2)$       & 29,495 & 14,892 & 42 & 0 & 8 & 1 & 3 \\
$(3,4,6,8,8,6,4,3)$       & 29,493 & 13,420 & 10 & 0 & 8 & 0 & 1 \\
$(3,6,6,9,9,6,6,3)$       & 29,493 & 10,827 & 50 & 0 & 8 & 1 & 3 \\
$(4,5,7,7,7,7,5,4)$       & 29,493 &  9,539 &  2 & 0 & 8 & 0 & 0 \\
$(4,7,9,10,10,9,7,4)$     & 29,493 & 13,118 & 49 & 0 & 8 & 2 & 4 \\
$(4,8,10,10,10,10,8,4)$   & 29,493 & 13,924 & 79 & 0 & 8 & 0 & 3 \\
\midrule
\textbf{Total}
& \textbf{353,936}
& \textbf{156,028}
& \textbf{270}
& \textbf{0}
& \textbf{96}
& \textbf{4}
& \textbf{16} \\
\bottomrule
\end{tabular}
}

\end{center}

\subsection{Takeaways and Caveats:} Across \(16{,}872{,}854\) randomly generated Peck posets, every non-SCD example was certified not to admit a strong action. Among \(7{,}975\) SCD-critical posets, the autodisprover certified \(7{,}819\), leaving \(156\) unresolved. Numerical searches found no strong action in any unresolved case.  While this did produce a small number of unresolved open cases, where we could neither prove nor disprove the existence of a strong $\mathfrak{sl}_2$-action, overall we think these results are compelling computational evidence for the conjecture.  Here are a few additional remarks on these results: 

\begin{itemize}

\item The autodisprover has essentially completely saturated testing posets which appear in the random searches.  This is likely due in part to the fact that we essentially trained it on this data. We developed the autodisprover iteratively, adding new arguments whenever it failed on an example. By the end, we were training it on examples much rarer and more complicated than those arising in ordinary random searches.  It is possible that potential counterexamples have to be far more structured than random posets, and we are just missing them entirely.

\smallskip

\item On SCD-critical posets (the worst case scenario for showing an $\mathfrak{sl}_2$-action does not exist) the success rate drops. In total we had $156$ open cases out of $7975$ SCD-critical posets tested -- about a $98 \%$ success rate overall. This is still a high overall success rate. For some larger shapes, however, the success rate dropped significantly. The worst was  $(4, 7, 9, 10, 10, 9, 7, 4)$ with a $50\%$ success rate. 

\smallskip

\item Not shown in these tables is the intermediate edge-density regime in between where random searches start producing posets which have SCDs  nearly $100\%$ of the time, and the maximally dense SCD-critical posets.   One can test these by starting with a random non-SCD poset and iteratively adding edges while making sure it doesn't create an SCD, but stopping short of making it SCD-critical. We decided to only test the purely random, and maximally difficult regimes for this experiment, but it's possible that some interesting behavior occurs in this intermediate regime that is not captured by this data.

\smallskip

\item The numerical solver is not guaranteed to find a solution if one exists, so all of the open cases are potential counterexamples to the conjecture.  That being said, we ran versions of this experiment many times and tested many more open cases than just the $156$ in this official run, and we have yet to find an actual counterexample.

\smallskip

\item This is just a snapshot of where the autodisprover is right now.  As we worked on this we watched the success rate on both random and SCD-critical posets creep up with every iteration.  We have no reason to think that it couldn't be improved further to cover more of those remaining open cases. 

\smallskip


\end{itemize}

\subsection{Gallery of Failures} To close things out, here are a few of the ``open" cases in which our autodisprover failed to disprove the existence of a strong $\mathfrak{sl}_2$-action.  Our searches for a numerical solution on these all failed, but they are all potential counterexamples to Conjecture \ref{theconjecture}.

\begin{center}
\includegraphics[scale=0.4]{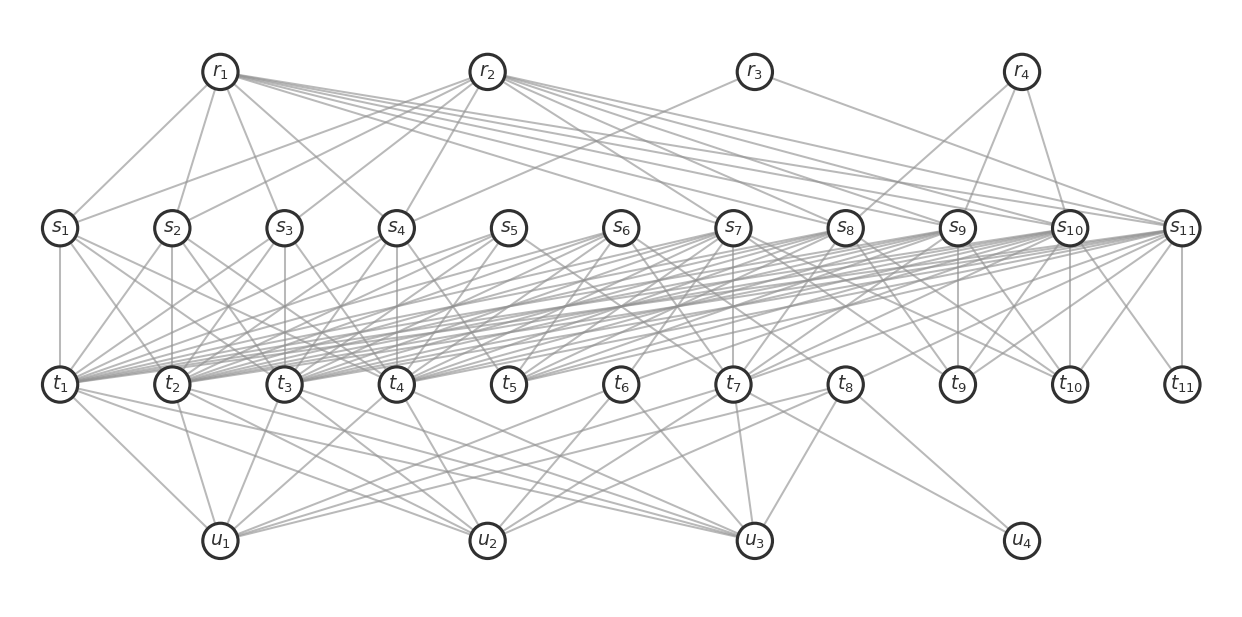}
\end{center}

\begin{center}
\includegraphics[scale=0.4]{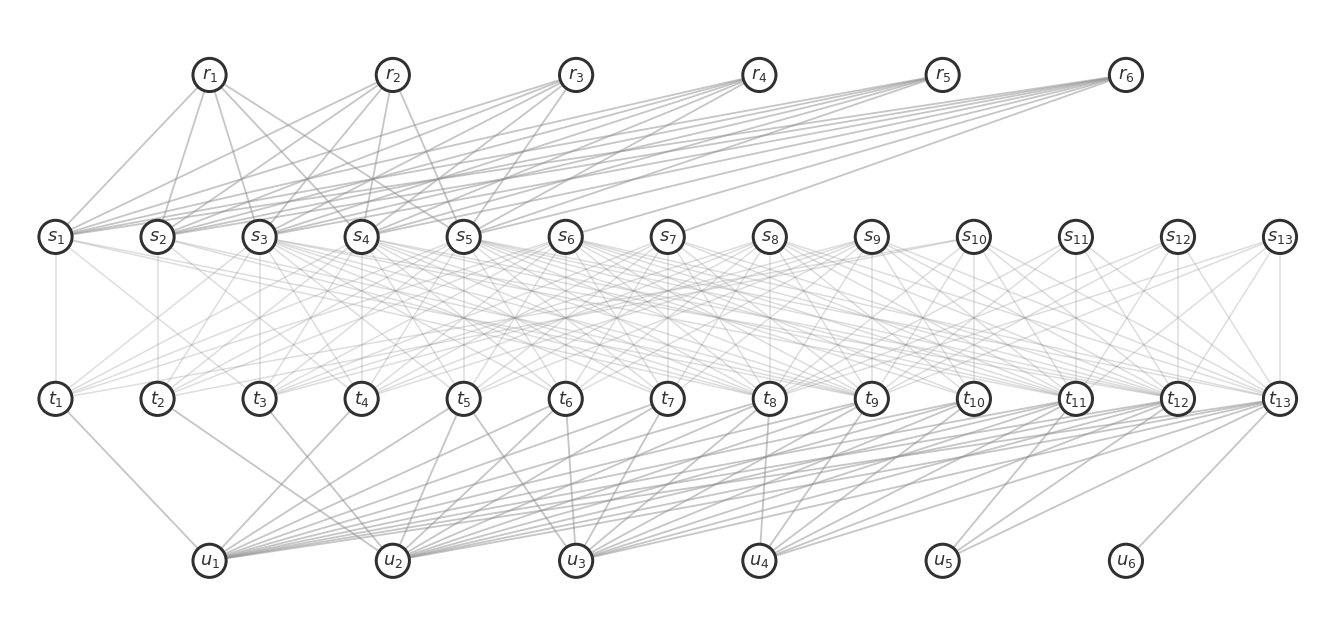}
\end{center}

\begin{center}
\includegraphics[scale=0.4]{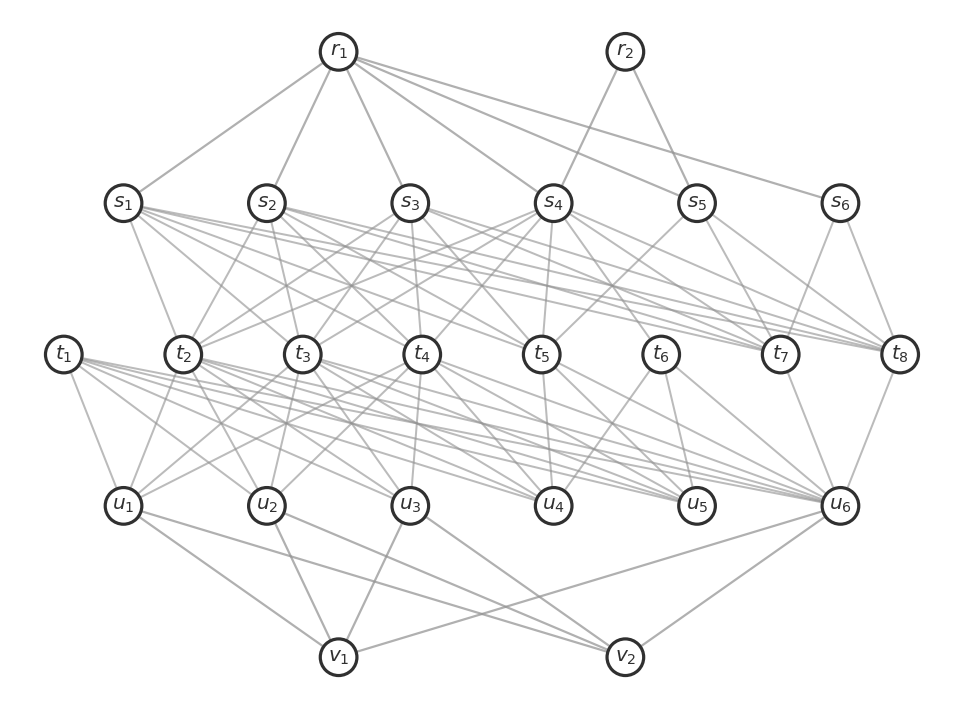}
\end{center}

\begin{center}
\includegraphics[scale=0.4]{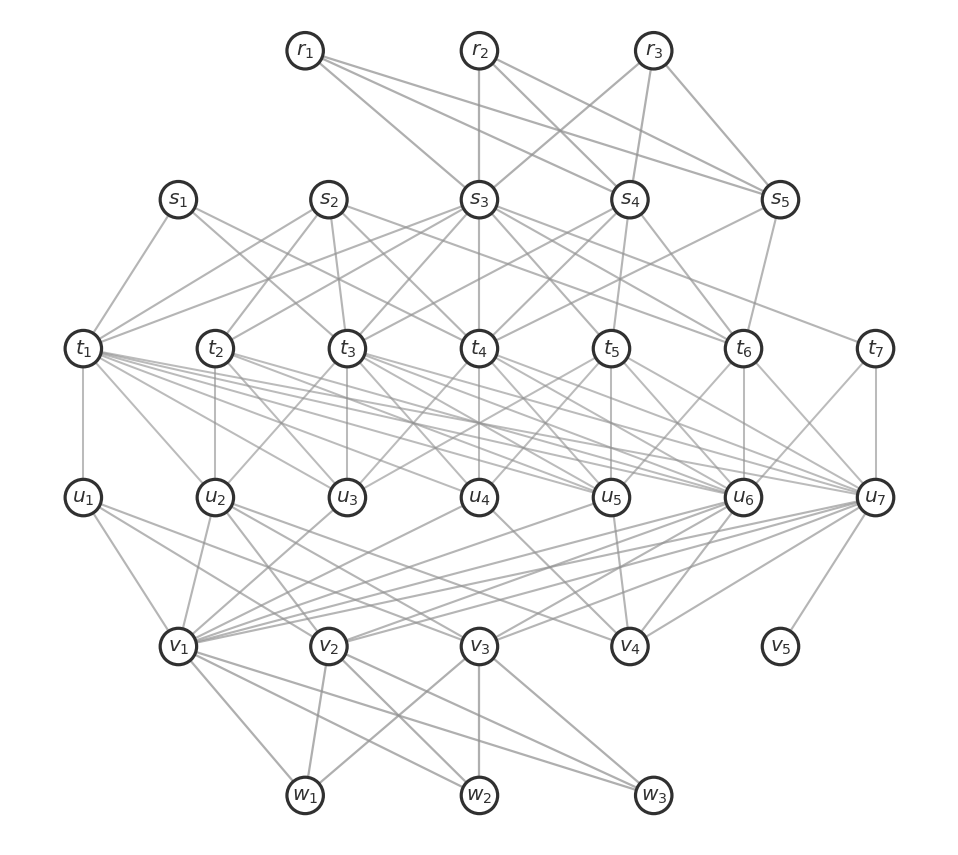}
\end{center}

\begin{center}
\includegraphics[scale=0.4]{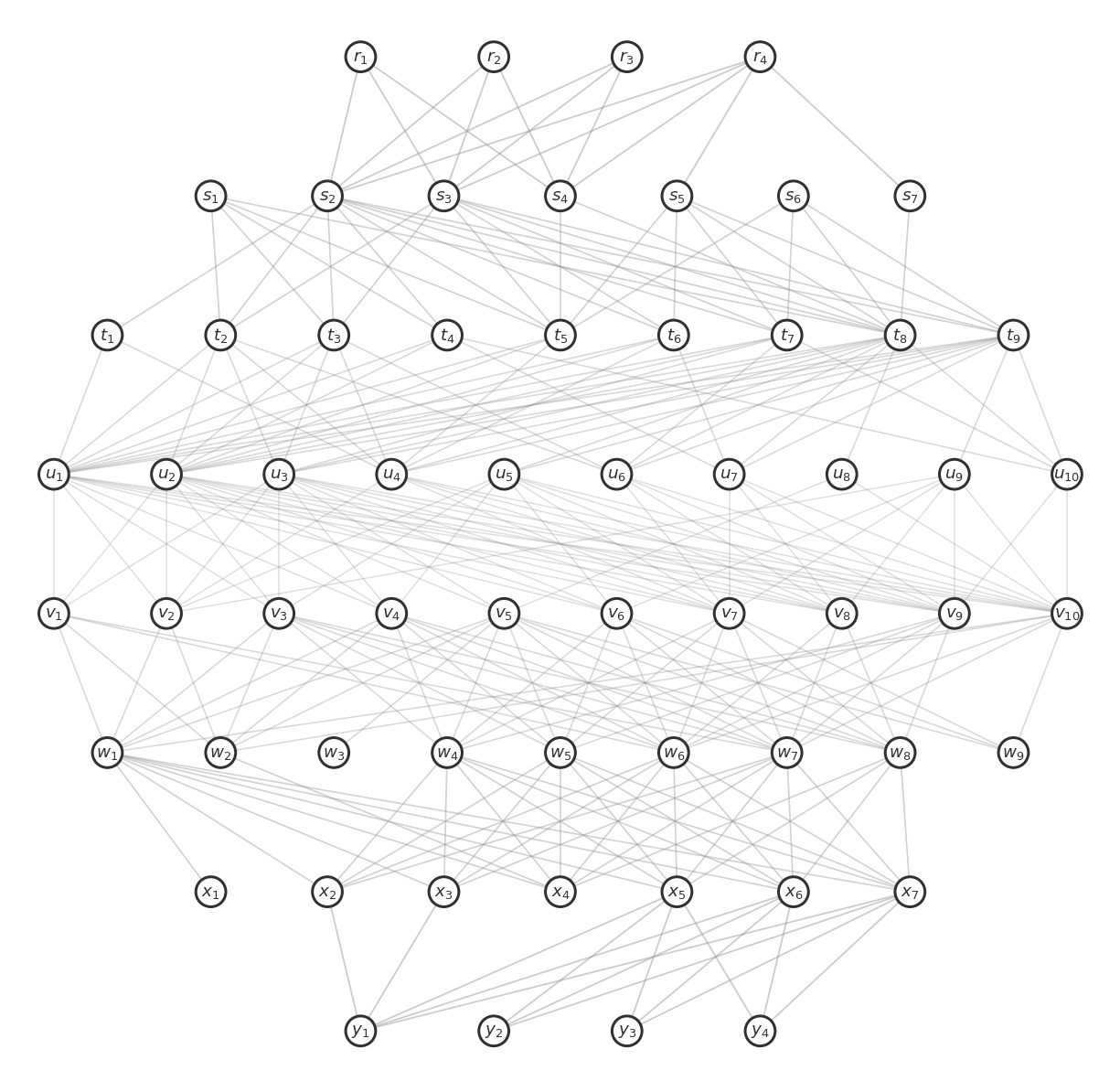}
\end{center}

Do any of these admit a strong $\mathfrak{sl}_2$-action?

\end{section}

\printbibliography

@article{leclerc94,
title = {A Finite Coxeter Group the Weak Bruhat Order of which is not Symmetric Chain},
author = {Bruno Leclerc},
journal = {European Journal of Combinatorics},
volume = {15},
number = {2},
pages = {181-185},
year = {1994},
issn = {0195-6698},
doi = {https://doi.org/10.1006/eujc.1994.1020}
}

@article{proctor82,
author = {Proctor, Robert A.},
title = {Representations of $\mathfrak{sl}( 2,\mathbb{C} )$ on Posets and the Sperner Property},
journal = {SIAM Journal on Algebraic Discrete Methods},
volume = {3},
number = {2},
pages = {275-280},
year = {1982},
doi = {10.1137/0603026}
}

@inproceedings{Donnelly1998Solitary,
  author    = {Donnelly, Robert G.},
  title     = {Solitary Bases for Irreducible Representations of
               Semisimple Lie Algebras},
  booktitle = {FPSAC '98},
  year      = {1998},
  note      = {Extended abstract, 12 pages},
  url       = {https://campus.murraystate.edu/academic/faculty/rdonnelly/Research/FPSAC98-Abstract.pdf}
}

@article{Donnelly2003Extremal,
  author  = {Donnelly, Robert G.},
  title   = {Extremal Properties of Bases for Representations of
             Semisimple {Lie} Algebras},
  journal = {Journal of Algebraic Combinatorics},
  volume  = {17},
  number  = {3},
  pages   = {255--282},
  year    = {2003},
  doi     = {10.1023/A:1025096704917}
}

@misc{DilliesJohnsonKowalska2025,
  author        = {Dillies, Ya{\"e}l and Johnson, Matthew
                   and Kowalska, Aleksandra},
  title         = {A Generalisation of {Sperner}'s Theorem
                   Using Weighted Chains},
  year          = {2025},
  eprint        = {2509.26493},
  archivePrefix = {arXiv},
  primaryClass  = {math.CO},
  url           = {https://arxiv.org/abs/2509.26493}
}

@book{AhujaMagnantiOrlin1993,
  author    = {Ahuja, Ravindra K. and Magnanti, Thomas L.
               and Orlin, James B.},
  title     = {Network Flows: Theory, Algorithms, and Applications},
  publisher = {Prentice Hall},
  address   = {Englewood Cliffs, NJ},
  year      = {1993}
}

@book{Schrijver2003,
  author    = {Schrijver, Alexander},
  title     = {Combinatorial Optimization: Polyhedra and Efficiency},
  series    = {Algorithms and Combinatorics},
  volume    = {24},
  publisher = {Springer},
  address   = {Berlin},
  year      = {2003}
}

@book {humphreys72,
    AUTHOR = {Humphreys, James E.},
     TITLE = {Introduction to {L}ie algebras and representation theory.},
    SERIES = {},
 PUBLISHER = {Springer-Verlag, New York-Berlin,, },
      YEAR = {1972},
     PAGES = {xii+169},
}

@article{Lindstrom1973,
  author  = {Lindstr{\"o}m, Bernt},
  title   = {On the vector representations of induced matroids},
  journal = {Bulletin of the London Mathematical Society},
  volume  = {5},
  number  = {1},
  pages   = {85--90},
  year    = {1973},
  doi     = {10.1112/blms/5.1.85}
}

@article{GesselViennot1985,
  author  = {Gessel, Ira and Viennot, G{\'e}rard},
  title   = {Binomial determinants, paths, and hook length formulae},
  journal = {Advances in Mathematics},
  volume  = {58},
  number  = {3},
  pages   = {300--321},
  year    = {1985},
  doi     = {10.1016/0001-8708(85)90121-5}
}

@article{Griggs1980Chains,
  author  = {Griggs, Jerrold R.},
  title   = {On chains and {Sperner} {$k$}-families in ranked posets},
  journal = {Journal of Combinatorial Theory, Series A},
  volume  = {28},
  number  = {2},
  pages   = {156--168},
  year    = {1980},
  doi     = {10.1016/0097-3165(80)90082-5}
}

\end{document}